\DeclareFontFamily{OT1}{rsfs}{}
\DeclareFontShape{OT1}{rsfs}{n}{it}{<-> rsfs10}{}
\DeclareMathAlphabet{\mathscr}{OT1}{rsfs}{n}{it}
\newtheorem{thm}{Theorem}[section]
\newtheorem{lemma}[thm]{Lemma}
\newtheorem{corollary}[thm]{Corollary}
\newtheorem{proposition}[thm]{Proposition}
\theoremstyle{definition}
\newtheorem{remark}[thm]{Remark}
\newtheorem{definition}[thm]{Definition}
\newtheorem{example}[thm]{Example}
\numberwithin{equation}{section}
\DeclareMathOperator{\Gal}{Gal}
\DeclareMathOperator{\Mat}{Mat}
\DeclareMathOperator{\GL}{GL}
\DeclareMathOperator{\Hom}{Hom}
\DeclareMathOperator{\Spec}{Spec}
\DeclareMathOperator{\Spf}{Spf}
\DeclareMathOperator{\Spa}{Spa}
\DeclareMathOperator{\val}{val}
\DeclareMathOperator{\Tr}{Tr}
\DeclareMathOperator{\id}{id}
\DeclareMathOperator{\im}{im}
\renewcommand{\H}{\ensuremath{{\rm{H}}}}
\newcommand*{\R}{\ensuremath{\mathbf{R}}}                        
\newcommand*{\Z}{\ensuremath{\mathbf{Z}}}                        
\newcommand*{\Q}{\ensuremath{\mathbf{Q}}}                        
\newcommand*{\F}{\ensuremath{\mathbf{F}}}                        
\newcommand*{\C}{\ensuremath{\mathbf{C}}}                        
\newcommand*{\A}{\ensuremath{\mathbf{A}}}                        
\newcommand*{\N}{\ensuremath{\mathbf{N}}}                        
\newcommand*{\B}{\ensuremath{\mathbf{B}}}
\newcommand*{\E}{\ensuremath{\mathbf{E}}}
\newcommand*{\rig}{\mathrm{rig}}
\newcommand{\htimes}{\mathop{\widehat\otimes}}
\newcommand*{\cyc}{\rm{cyc}}
\begin{document}

\title{Galois representations over pseudorigid spaces}
\author[R. Bellovin]{Rebecca Bellovin}
\address{Rebecca Bellovin	\\
School of Mathematics and Statistics	\\
University of Glasgow	\\
University Place	\\
Glasgow G12 8QQ	\\
United Kingdom
}
\email{rebecca.bellovin@glasgow.ac.uk}
\urladdr{http://rmbellovin.github.io}



\thanks{I am grateful to Kevin Buzzard, Toby Gee, Kiran Kedlaya, and James Newton for many helpful conversations during the course of this work.}

\begin{abstract}
	We study $p$-adic Hodge theory for families of Galois representations over pseudorigid spaces.  Such spaces are non-archimedean analytic spaces which may be of mixed characteristic, and which arise naturally in the study of eigenvarieties at the boundary of weight space.  We introduce perfect and imperfect overconvergent period rings, and we use the Tate--Sen method to construct overconvergent $(\varphi,\Gamma)$-modules for Galois representations over pseudorigid spaces.
\end{abstract}

\maketitle


\bigskip

\section{Introduction}

In this article, we study $p$-adic Hodge theory for families of representations of $\Gal_K$, where $K/\Q_p$ is a finite extension, and the families vary over certain analytic spaces, in the sense of Huber~\cite{huber1994}.  
Such families have been considered by a number of authors in the classical rigid analytic setting, where $p$ is invertible in $R$, but working in Huber's setting permits us to study Galois representations with coefficients which are characteristic $p$ or mixed characteristic.

Roughly speaking, $p$-adic Hodge theory is the study of representations of Galois groups, where both the Galois group and the coefficients are $p$-adic or characteristic $p$.  One of the most powerful tools for studying $p$-adic Galois representations is the theory of $(\varphi,\Gamma)$-modules, which provide an equivalence between Galois representations and a certain category of modules equipped with an operator $\varphi$ and the action of a $1$-dimensional $p$-adic Lie group, called \emph{\'etale $(\varphi,\Gamma)$-modules}.

The category of \'etale $(\varphi,\Gamma)$-modules is a full subcategory of the category of all $(\varphi,\Gamma)$-modules, and it often happens that the $(\varphi,\Gamma)$-module attached to an irreducible Galois representation becomes reducible in this larger category.  Moreover, this reducibility is closely related to subtle $p$-adic Hodge theoretic invariants of the representation.  If the $(\varphi,\Gamma)$-module attached to a Galois representations is the successive extension of rank-$1$ $(\varphi,\Gamma)$-modules, the representation is said to be \emph{trianguline}.

One key feature of $(\varphi,\Gamma)$-modules is that they behave well in rigid analytic families.  Given a Galois representation with coefficients in a $\Q_p$-affinoid algebra, the work of Berger and Colmez~\cite{berger-colmez} constructs a family of $(\varphi,\Gamma)$-modules.  Their construction is functorial, and so globalizes to sheaves of Galois representations over general rigid analytic spaces.

However, in recent years, interest has developed in families of Galois representations parametrized by analytic spaces which are not defined over a field.  For example, Andreatta--Iovita--Pilloni constructed the eigencurve in mixed characteristic~\cite{aip2018}, and their construction was extended to more general eigenvarieties by Johansson--Newton~\cite{johansson-newton}.

In this note, we study Galois representations with coefficients in similar rings.  More precisely, we consider projective modules $M$ over pseudoaffinoid algebras $R$ equipped with a continuous $R$-linear action of $\Gal_K$ (pseudoaffinoid algebras, and their associated pseudorigid adic spaces, are a class of analytic adic spaces studied in \cite{johansson-newton} and \cite{lourenco}).

Before we can construct and study families of $(\varphi,\Gamma)$-modules, we need to define the appropriate overconvergent period rings.  In the rigid analytic setting, it was enough to take completed tensor products of $\Q_p$-Banach algebras and the overconvergent period rings $\B_{\rig,K}^{\dagger,s}$ defined in e.g.~\cite{berger}.  This is not possible in our setting because neither $R$ nor $\B_{\rig,K}^{\dagger,s}$ has the $p$-adic topology.  However, both are affinoid Tate rings, and in this setting it is possible to define fiber products of the associated adic spaces.  We provide a construction in Appendix~\ref{appendix} for the convenience of the reader.

We then define both perfect and imperfect overconvergent rings, which we denote by $\widetilde\Lambda_{R,[a,b]}$ and $\Lambda_{R,[a,b],K}$.  Let $\chi:\Gal_K\rightarrow \Z_p^\times$ denote the cyclotomic character and let $H_K:=\ker\chi$; there is an action of $H_K$ on $\widetilde\Lambda_{R,[a,b]}$.  We study this Galois action on the perfect rings, and show that we have appropriate normalized trace maps $R_{K,n}:\widetilde\Lambda_{R,[0,b]}^{H_K}\rightarrow \varphi^{-n}\Lambda_{R,[0,p^{-n}b],K}$.  

This lets us prove our main theorem, on the construction of $(\varphi,\Gamma)$-modules attached to Galois representations:
\begin{thm}
Let $R$ be a pseudoaffinoid  algebra, and let $M$ be a finite projective $R$-module of rank $d$ equipped with a continuous action of $\Gal_K$, for a finite extension $K/\Q_p$.  Then there is a functorially associated projective $(\varphi,\Gamma_K)$-module $D_{(0,b],K}(M)$. This $(\varphi,\Gamma)$-module is equipped with a $\Gal_K$- and $\varphi$-equivariant isomorphism 
\[	\widetilde\Lambda_{R,(0,b]}\otimes_{\Lambda_{R,(0,b],K}}D_{(0,b],K}(M)\xrightarrow\sim \widetilde\Lambda_{R,(0,b]}\otimes_RM	\]
\end{thm}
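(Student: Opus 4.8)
The plan is to follow the classical Tate--Sen--Colmez strategy, now in the pseudorigid setting, exploiting the normalized trace maps $R_{K,n}$ constructed earlier. First I would pass to the perfect overconvergent ring and produce a $\Gal_K$-stable lattice-like model of $M$ there: consider $\widetilde\Lambda_{R,(0,b]}\otimes_R M$ with its diagonal semilinear $\Gal_K$-action, and descend along $H_K\subset\Gal_K$ to obtain a module over $\widetilde\Lambda_{R,(0,b]}^{H_K}$ with a residual $\Gamma_K$-action. The key input here is that, after possibly shrinking $b$, the Tate--Sen axioms hold for the tower $\widetilde\Lambda_{R,[0,b]}^{H_K}$ with the maps $R_{K,n}$ playing the role of the normalized traces, so that $H^1$ of $H_K$ with coefficients in the relevant rings vanishes in a suitable sense; this is what allows one to find a basis of $\widetilde\Lambda_{R,(0,b]}\otimes_R M$ on which $H_K$ acts trivially (equivalently, to trivialize the $H_K$-cocycle defining $M$ over the perfect ring).

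Next I would use the normalized trace maps to descend from the perfect ring to the imperfect ring $\Lambda_{R,(0,p^{-n}b],K}$. Concretely, once $H_K$ acts trivially on a basis over $\widetilde\Lambda_{R,(0,b]}^{H_K}$, the matrix of each $\gamma\in\Gamma_K$ in that basis has entries in $\widetilde\Lambda_{R,(0,b]}^{H_K}$, and applying $R_{K,n}$ entrywise (for $n$ large enough, again possibly shrinking $b$) produces a new basis whose $\Gamma_K$-matrices have entries in $\varphi^{-n}\Lambda_{R,(0,p^{-n}b],K}$; one checks this new basis still spans after extension of scalars to the perfect ring. Twisting back by $\varphi^n$ (i.e.\ rescaling the interval) gives a finite projective $\Lambda_{R,(0,b],K}$-module $D_{(0,b],K}(M)$ with commuting semilinear $\varphi$ and $\Gamma_K$ actions, together with the asserted $\Gal_K$- and $\varphi$-equivariant comparison isomorphism $\widetilde\Lambda_{R,(0,b]}\otimes_{\Lambda_{R,(0,b],K}}D_{(0,b],K}(M)\xrightarrow{\sim}\widetilde\Lambda_{R,(0,b]}\otimes_R M$ essentially by construction.

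Functoriality and independence of the choices I would handle as in Berger--Colmez: any two trivializations of the $H_K$-cocycle differ by an element of $1+(\text{positive filtration})$, and the two resulting descents are identified by a canonical isomorphism after checking it lands in the imperfect ring (again using $R_{K,n}$ and a convergence estimate); a morphism $M\to M'$ of Galois modules induces a morphism of cocycles, hence of the descended modules. Projectivity of $D_{(0,b],K}(M)$ would follow by a standard argument: it is finitely generated over the (coherent, or at least noetherian-on-affinoids) ring $\Lambda_{R,(0,b],K}$, it is flat because it becomes free after the faithfully flat base change to $\widetilde\Lambda_{R,(0,b]}$ — or, more carefully, by spreading out $M$ over an open cover of $\Spec R$ and using that finite flat plus finitely presented implies projective.

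The main obstacle I expect is the descent step itself: verifying that the Tate--Sen formalism genuinely applies, i.e.\ that the normalized trace maps $R_{K,n}$ satisfy the required estimates uniformly in the pseudorigid context where there is no $p$-adic topology and $R$ may be of mixed or equal characteristic $p$. In particular, one must control the ``denominators'' introduced by dividing cocycles by $\gamma-1$ and by the normalized traces simultaneously, and ensure that after finitely many shrinkings of $b$ all the matrices land in the correct imperfect subring; making the convergence bookkeeping work without the crutch of $p$-adic valuations — using instead the multiplicative/interval structure of the $\Lambda_{R,[a,b]}$ — is where the real work lies. The projectivity claim is a secondary technical point but should be routine given the earlier structural results on these period rings.
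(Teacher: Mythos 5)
Your overall route is the same as the paper's: trivialize the cocycle over the perfect ring via the Tate--Sen formalism, descend to the imperfect ring using the normalized traces $R_{K,n}$, and get the comparison isomorphism by construction. But there is a concrete gap in the first step. The Tate--Sen machinery (axiom {\bf TS1}, almost \'etale descent) only trivializes cocycles of $H_L$ whose values are close to the identity; it does not apply directly to the $H_K$-cocycle of an arbitrary $M$. The paper therefore first chooses $k$ with $v_{R,b}(u^k)>c_1+2c_2+2c_3$ and a finite Galois extension $L/K$ such that $\Gal_L$ acts trivially on $M_0/u^kM_0$, so that the relevant cocycle lies in $1+u^k\Mat_d(\widetilde\Lambda_{R_0,[0,b]})$; only then do the descent arguments of Berger--Colmez (Propositions 3.2.5 and 3.2.6, run $u$-adically) produce a module over $\varphi^{-n}\bigl(\Lambda_{R_0,[0,p^{-n}b],L}\bigr)$. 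This forces a final step your outline omits entirely: one obtains a $(\varphi,\Gamma_L,\Gal_{L/K})$-module and must descend it from $L$ to $K$, which the paper does by showing $\Lambda_{R_0,[0,b],L}/\Lambda_{R_0,[0,b],K}$ is finite free (via the dual basis for the trace pairing) and invoking effective finite flat descent. Without the reduction to $L$, the step ``trivialize the $H_K$-cocycle'' would fail for representations whose residual image is nontrivial.

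The projectivity argument is also not right as stated. Flatness of $D_{(0,b],K}(M)$ cannot be checked after base change to $\widetilde\Lambda_{R,(0,b]}$, because $\Lambda_{R,(0,b],K}\rightarrow\widetilde\Lambda_{R,(0,b]}$ is not known to be faithfully flat (the perfect ring is enormous and non-noetherian, and no such flatness is established anywhere in the paper). In the paper, projectivity is exactly what the Galois descent step buys: the module over $\Lambda_{R_0,[0,b],L}$ is free, the extension $\Lambda_{R_0,[0,b],K}\rightarrow\Lambda_{R_0,[0,b],L}$ is a finite flat cover, and finiteness, flatness, and finite presentation (hence projectivity, using noetherianness of the imperfect ring) all descend along it. So the passage through $L$ is not an optional refinement but the mechanism that delivers both existence and projectivity of $D_{(0,b],K}(M)$.
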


We conclude by showing how to compute the Galois cohomology of $M$ in terms of $D_{(0,b],K}(M)$, using the Fontaine--Herr--Liu complex.

In subsequent work ~\cite{bellovin21}, we study the cohomology of $(\varphi,\Gamma)$-modules over pseudorigid spaces, and give applications to eigenvarieties at the boundary of weight space.

\section{Classical rings of $p$-adic Hodge theory}

Let $\C_p^\flat:=\varprojlim_{x\rightarrow x^p}\C_p$, and let $\mathscr{O}_{\C_p}^\flat$ be the subset of $x\in\C_p^\flat$ such that $x^{(0)}\in\mathscr{O}_{\C_p}$.  Then $\C_p^\flat$ is an algebraically closed field of characteristic $p$ with ring of integers $\mathscr{O}_{\C_p}^\flat$; Colmez calls these rings $\widetilde\E$ and $\widetilde\E^+$, respectively.  There is a valuation $v$ defined by $v((x^{(i)}))=v_p(x^{(0)})$, and $\C_p^\flat$ is complete with respect to this valuation.  There is also a Frobenius (given by raising to the $p$th power).  

Let $F$ be a finite unramified extension of $\Q_p$ with ring of integers $\O_F$ and residue field $k_F$ (so that $\O_F=W(k_F)$).  Let $\varepsilon:=(\varepsilon^{(0)},\varepsilon^{(1)},\varepsilon^{(2)}\ldots)\in \mathscr{O}_{\C_p}^\flat$ be a choice of compatible $p$th power roots of unity with $\varepsilon^{(0)}=1$ and $\varepsilon^{(1)}\neq 1$.  There is a natural map $k_F(\!(\overline\pi)\!)\rightarrow \C_p^\flat$ given by sending $\overline\pi$ to $\varepsilon-1$; we denote its image by $\E_F$, we denote by $\E$ the separable closure of $\E_F$ inside $\C_p^\flat$, and we denote by $\E^+$ the valuation ring of $\E$.

Let $\A_{\mathrm{inf}}:=W(\mathscr{O}_{\C_p}^\flat)$ and $\widetilde\A:=W(\C_p^\flat)$. There are two possible topologies on $\A_{\mathrm{inf}}$ and $\widetilde\A$, the $p$-adic topology or the weak topology; they are complete for both.  

The \emph{$p$-adic topology} is defined by putting the discrete topology on each quotient $W(\C_p^\flat)/p^nW(\C_p^\flat)$, and taking the projective limit topology on $\widetilde\A$; $\A_{\mathrm{inf}}$ is given the subspace topology.  The \emph{weak topology} is defined by putting the valuation topology on $\C_p^\flat$ and giving $\widetilde\A$ the product topology; $\A_{\mathrm{inf}}$ is again given the subspace topology.

Alternatively, the weak topology on $\widetilde\A$ is given by taking the sets 
\[	U_{k,n}:=p^k\widetilde\A+[\widetilde p]^n\A_{\mathrm{inf}}\text{ for }k,n\geq 0	\]
to be a basis of neighborhoods around $0$, where $\widetilde p\in\mathscr{O}_{\C_p}^\flat$ is any fixed element with $\widetilde p^{(0)}=p$ (i.e., $\widetilde p$ is a system of compatible $p$-power roots of $p$).  The weak topology on $\A_{\mathrm{inf}}$ is similarly generated by the sets 
\[	U_{k,n}\cap\A_{\mathrm{inf}} = p^k\A_{\mathrm{inf}}+[\widetilde p]^n\A_{\mathrm{inf}}	\]
The weak topology on $\A_{\mathrm{inf}}$ is equivalent to the $(p,[\varpi])$-adic topology, for any pseudo-uniformizer $\varpi\in\C_p^\flat$.

Both rings carry continuous bijective actions of Frobenius (for either topology).  However, the Galois action is continuous for the weak topology, but it is not continuous for the $p$-adic topology because the Galois actions on $\mathscr{O}_{\C_p}^\flat$ and $\C_p^\flat$ are not discrete.

Explicitly, Frobenius acts by
\[	\varphi\left(\sum_{k=0}^\infty p^k[x_k]\right)=\sum_{k=0}^\infty p^k[x_k^p]	\]
and the Galois group $\Gal_{K}$ acts by
\[	\sigma\left(\sum_{k=0}^\infty p^k[x_k]\right)=\sum_{k=0}^\infty p^k[\sigma(x_k)]	\]

Now consider the pre-adic space $\Spa \A_{\mathrm{inf}}$ and its analytic adic subspace $\mathcal{Y}$ (i.e., $\mathcal{Y}$ is $\Spa\A_{\mathrm{inf}}$ minus the point corresponding to the maximal ideal).  If $\varpi$ is a pseudo-uniformizer of $\C_p^\flat$, there is a surjective continuous map $\kappa:\mathcal{Y}\rightarrow [0,\infty]$ given by
\[	\kappa(x):=\frac{\log\lvert[\varpi](\widetilde x)\rvert}{\log\lvert p(\widetilde x)\rvert}	\]
where $\widetilde x$ is the rank-$1$ generization of $x$.  If $I\subset [0,\infty]$ is an interval, we let $\mathcal{Y}_I:=\kappa^{-1}(I)$.  The Frobenius on $\A_{\mathrm{inf}}$ induces isomorphisms $\mathcal{Y}_I\rightarrow \mathcal{Y}_{pI}$ (since $\kappa\circ\varphi=p\kappa$).  Note that $\log\lvert[\varpi](\widetilde x)\rvert, \log\lvert p(\widetilde x)\rvert\in [-\infty,0)$, since $p$ and $[\varpi]$ are both topologically nilpotent, and therefore $\lvert[\varpi](\widetilde x)\rvert, \lvert p(\widetilde x)\rvert <1$.

Following Scholze, we choose $\varpi=\widetilde p$, that is, a compatible sequence of $p$-power roots of $p$.
Suppose $a,b\in[0,\infty]$ are rational numbers and $a\leq b$.  Then $\mathcal{Y}_{[a,b]}$ is an affinoid subspace of $\mathcal{Y}$, and we write 
\[	\mathcal{Y}_{[a,b]}=\Spa(\widetilde\Lambda_{[a,b]}, \widetilde\Lambda_{[a,b]}^\circ)	\]
The inequalities
\[	a\leq \frac{\log\lvert[\varpi](\widetilde x)\rvert}{\log\lvert p(\widetilde x)\rvert}\leq b	\]
translate to the conditions
\[	a\log\lvert p(\widetilde x)\rvert\geq \log\lvert[\varpi](\widetilde x)\rvert\geq b\log\lvert p(\widetilde x)\rvert	\]
or equivalently, $\lvert p(\widetilde x)\rvert^a\geq \lvert[\varpi](\widetilde x)\rvert\geq \lvert p(\widetilde x)\rvert^b$.  Thus,
\begin{equation*}
\resizebox{\displaywidth}{!}{$
\begin{split}
(\widetilde\Lambda_{[a,b]}, \widetilde\Lambda_{[a,b]}^\circ)&=\left(\A_{\mathrm{inf}}\left\langle \frac{p}{[\varpi]^{1/b}}, \frac{[\varpi]^{1/a}}{p}\right\rangle\left[\frac{1}{[\varpi]}\right],\A_{\mathrm{inf}}\left\langle \frac{p}{[\varpi]^{1/b}}, \frac{[\varpi]^{1/a}}{p}\right\rangle\left[\frac{1}{[\varpi]}\right]^\circ\right)	\\
&= \left(\A_{\mathrm{inf}}\left\langle \frac{p}{[\varpi]^{1/b}}, \frac{[\varpi]^{1/a}}{p}\right\rangle\left[\frac{1}{[\varpi]}\right],\A_{\mathrm{inf}}\left\langle \frac{p}{[\varpi]^{1/b}}, \frac{[\varpi]^{1/a}}{p}\right\rangle\right)
\end{split}
$}
\end{equation*}
Here we take $p/[\varpi]^{\infty}:=1/[\varpi]$ and we take $[\varpi]^\infty/p:=0$.  
If $\frac{p-1}{pa}, \frac{p-1}{pb}\in\Z_{\geq 0}[1/p]\cup\{\infty\}$, this is the pair of rings denoted $(\widetilde\A_{[s(b),s(a)]}[\frac{1}{[\varpi]}], \widetilde\A_{[s(b),s(a)]})$ in~\cite{berger} (since $v(\widetilde p)=1$ and $v(\pi)=p/p-1$).

In the special case $I=[a,b]=[0,b]$, $(\widetilde\Lambda_{[0,b]}, \widetilde\Lambda_{[0,b]}^\circ)$ is the pair of rings denoted $(\widetilde\A^{(0,b]},\widetilde\A^{\dagger,s(b)})$, where $s(b)=(p-1)/pb$.  We can write these rings more explicitly as subrings of $\widetilde\A$:
\[	\widetilde\Lambda_{[0,b]}^\circ=\widetilde\A^{\dagger,s(b)}=\{x=\sum_{k=0}^\infty p^k[x_k]\in\widetilde\A\mid x_k\in\mathscr{O}_{\C_p}^\flat, x_k\varpi^{k/b}\rightarrow 0\}	\]
and
\[	\widetilde\Lambda_{[0,b]}=\widetilde\A^{(0,b]}=\{x=\sum_{k=0}^\infty p^k[x_k]\in\widetilde\A\mid x_k\varpi^{k/b}\rightarrow 0\}	\]

If $[a,b]\neq[0,\infty]$, then the pair $(\widetilde\Lambda_{[a,b]},\widetilde\Lambda_{[a,b]}^\circ)$ is a Tate algebra; if $a\neq 0$, then $p$ is a pseudo-uniformizer, and if $b\neq\infty$, then $[\varpi]$ (and $[\overline\pi]$) is a pseudo-uniformizer.

We can equip $\widetilde\Lambda_{[0,b]}$ with a valuation 
\[	\val^{[0,b]}(x):=\inf_{k\geq 0}(v_{\C_p^\flat}(x_k)+k/b)	\]
It is separated and complete with respect to this valuation, and $\widetilde\Lambda_{[0,b]}^\circ$ is the ring of integers.

Since $\C_p^\flat$ is algebraically closed, we can extract arbitrary roots of $\varpi$; we may therefore define another valuation $v_b$ on $\widetilde\Lambda_{[0,b]}$ by setting
\[	v_b(x):=\sup_{r\in\Q:[\varpi]^rx\in \widetilde\Lambda_{[0,b]}^\circ}-r	\]
We observe that $v_b([\varpi]x)=1+v_b(x)$ for any $x\in\widetilde\Lambda_{[0,b]}$.

\begin{lemma}
For $x\in \widetilde\Lambda_{[0,b]}$, $\val^{(0,b]}(x)= v_b(x)$.
\end{lemma}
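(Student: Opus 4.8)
The plan is to compare the two valuations directly on elements $x=\sum_{k\ge 0}p^k[x_k]\in\widetilde\Lambda_{[0,b]}$ by unwinding both definitions in terms of the Teichmüller coordinates $x_k\in\C_p^\flat$. First I would record the effect of multiplying by $[\varpi]^r$ on the coordinates: since $[\cdot]$ is multiplicative, $[\varpi]^r\cdot\sum p^k[x_k]=\sum p^k[\varpi^r x_k]$, so the condition $[\varpi]^r x\in\widetilde\Lambda_{[0,b]}^\circ$ is, by the explicit description of $\widetilde\Lambda_{[0,b]}^\circ$ recalled above, exactly the requirement that $v_{\C_p^\flat}(\varpi^r x_k)+k/b\ge 0$ for all $k$, i.e. $r+v_{\C_p^\flat}(x_k)+k/b\ge 0$ for all $k$ (using $v_{\C_p^\flat}(\varpi)=1$, which holds for $\varpi=\widetilde p$). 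Equivalently $r\ge -\inf_{k\ge 0}\bigl(v_{\C_p^\flat}(x_k)+k/b\bigr)=-\val^{[0,b]}(x)$.

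Next I would observe that this shows $\{r\in\Q:[\varpi]^r x\in\widetilde\Lambda_{[0,b]}^\circ\}=\{r\in\Q: r\ge-\val^{[0,b]}(x)\}$, hence $v_b(x)=\sup\{-r: r\ge-\val^{[0,b]}(x)\}=\val^{[0,b]}(x)$, which is the claimed identity (the statement writes $\val^{(0,b]}$, but on $\widetilde\Lambda_{[0,b]}$ this is the valuation $\val^{[0,b]}$ defined just above). One mild point to handle: the infimum defining $\val^{[0,b]}(x)$ is over all $k\ge 0$ and a priori could fail to be attained, but since $x\in\widetilde\Lambda_{[0,b]}$ we have $x_k\varpi^{k/b}\to 0$, so $v_{\C_p^\flat}(x_k)+k/b\to+\infty$ and the infimum is a genuine minimum (in particular finite unless $x=0$, where both sides are $+\infty$ by convention). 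I should also note that $\C_p^\flat$ being algebraically closed guarantees $\varpi^r$ makes sense for all $r\in\Q$, so the set over which the supremum in $v_b$ is taken is what one expects.

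There is no real obstacle here; the only thing to be careful about is bookkeeping with the normalization of the valuation $v_{\C_p^\flat}$ and the convention for the value at $x=0$. The heart of the argument is the single identity $[\varpi]^r\sum p^k[x_k]=\sum p^k[\varpi^r x_k]$ together with the explicit coordinate description of $\widetilde\Lambda_{[0,b]}^\circ$; everything else is rewriting a sup as a negated inf.
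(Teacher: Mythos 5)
Your proposal is correct and follows essentially the same route as the paper: both arguments reduce membership of $[\varpi]^r x$ in $\widetilde\Lambda_{[0,b]}^\circ$ to the condition $\val^{[0,b]}(x)\geq -r$ via the identity $\val^{[0,b]}([\varpi]^r x)=r+\val^{[0,b]}(x)$ (which you justify by the coordinate computation $[\varpi]^r\sum p^k[x_k]=\sum p^k[\varpi^r x_k]$, and the paper simply asserts), and then identify the resulting supremum over rational $r$ with $\val^{[0,b]}(x)$. Your extra remarks about attainment of the infimum and the case $x=0$ are harmless refinements of the same argument.
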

\begin{proof}
We observe that $[\varpi]^rx\in\widetilde\Lambda_{[0,b]}^\circ$ if and only if $\val^{[0,b]}([\varpi]^rx)=r+\val^{[0,b]}(x)\geq 0$, which holds if and only if $\val^{[0,b]}(x)\geq -r$.  Since we may approximate $\val^{[0,b)}(x)$ from below by rational numbers, it follows that $\val^{(0,b]}(x)= v_b(x)$.
\end{proof}

There are versions of all of these rings with no tilde; they are imperfect versions of the rings with tildes.

Let $\pi\in\widetilde\A_{\mathrm{inf}}$ denote $[\varepsilon]-1$, where $[\varepsilon]$ denotes the Teichm\"uller lift of $\varepsilon$.  Then there is a well-defined injective map $\mathscr{O}_F[\![X]\!][X^{-1}]\rightarrow \widetilde\A$ given by sending $X$ to $\pi$; we let $\A_F$ denote the $p$-adic completion of the image.  This is a Cohen ring for $\E_F$.  We define $\A$ to be the completion of the integral closure of the image of $\Z_p[\![X]\!][X^{-1}]$ in $\widetilde\Lambda_{[0,0]}$, and we let $\A^+:=\A\cap\A_{\mathrm{inf}}$.

Because extensions of $\E_F$ correspond to unramified extensions of $\A_F[1/p]$, we get a natural Galois action on $\A$ and $\A^+$.  If $K/\Q_p$ is an arbitrary finite extension, we may therefore define $\A_K:=\A^{H_K}$ and $\A_K^+=(\A^+)^{H_K}$.  When $K$ is unramified over $\Q_p$, this agrees with our original definition of these rings.

We define the \emph{overconvergent} subrings of $\A_K$: For $b\in[0,\infty]$, let $\Lambda_{[0,b],K}:=\A_K\cap\widetilde\Lambda_{[0,b]}$ and let $\Lambda_{[0,b],K}^\circ:=\A_K\cap\widetilde{\Lambda}_{[0,b]}^\circ$.  These rings are given the topology induced as closed subspaces of $\widetilde\Lambda_{[0,b]}$. Thus, $\A_K^+=\Lambda_{[0,\infty],K}$ and $\A_K=\Lambda_{[0,0],K}$.

Since we have isomorphisms $\varphi:\widetilde\Lambda_{[a,b]}^{H_K}\xrightarrow{\sim}\widetilde\Lambda_{[a/p,b/p]}^{H_K}$, we have induced Frobenius maps $\varphi:\Lambda_{[0,b],K}\rightarrow\Lambda_{[0,b/p],K}$.  However, as $\E_K$ is imperfect, these maps are no longer isomorphisms.  Indeed, $\Lambda_{[0,0],K}$ is free over $\varphi(\Lambda_{[0,0],K})$ of rank $p$, with a basis given by $\{1, [\varepsilon],\ldots,[\varepsilon^{p-1}\}$.  We may therefore define a left inverse $\psi:\Lambda_{[0,0],K}\rightarrow \Lambda_{[0,0],K}$ to $\varphi$ via 
	\[	\psi:=\frac{1}{p}\varphi^{-1}\circ \Tr_{\Lambda_{[0,0],K}/\varphi(\Lambda_{[0,0],K})}	\]

\begin{proposition}
If $b\in (0,\infty)$, then $\Lambda_{[0,b],K}$ is a Tate ring with ring of definition $\Lambda_{[0,b],K}^\circ$ and pseudo-uniformizer $\pi$.  If $b=\infty$, then $\Lambda_{[0,\infty],K}=\A_K^+$ is an adic ring topologized by the ideal $(p,\pi)$.
\end{proposition}
\begin{proof}
We first observe that the cokernel of the inclusion $\A_K\hookrightarrow\widetilde\A^{H_K}$ has no $p$- or $\pi$-torsion, so the same holds for the cokernel of the inclusions $\Lambda_{[0,b],K}^\circ\hookrightarrow(\widetilde\Lambda_{[0,b]}^\circ)^{H_K}$ for $b>0$.  Thus, the natural map $\Lambda_{[0,\infty],K}/p\rightarrow \A_{\mathrm{inf}}^{H_K}/p=\widehat{\mathscr{O}}_{K_\infty}^\flat$ remains injective; since $\Lambda_{[0,\infty],K}$ has the closed subspace topology from $\A_{\mathrm{inf}}^{H_K}$ and the topology on $\widehat{\mathscr{O}}_{K_\infty}^\flat$ is $\overline\pi$-adic, the topology on $\Lambda_{[0,\infty],K}$ is $(p,\pi)$-adic.  Similarly, for $b\in(0,\infty)$, the natural map $\Lambda_{[0,b],K}^\circ/\pi\rightarrow (\widetilde\Lambda_{[0,b]}^\circ)^{H_K}/\pi$ remains injective.  Since $\pi\in(\widetilde\Lambda_{[0,b]}^\circ)^{H_K}$ and $\pi$ is a topologically nilpotent unit of $\widetilde\Lambda_{[0,b]}^{H_K}$, the ideal $(\pi)\subset (\widetilde\Lambda_{[0,b]}^\circ)^{H_K}$ is an ideal of definition and $(\widetilde\Lambda_{[0,b]}^\circ)^{H_K}/\pi$ is discrete.  It follows that $(\pi)\subset \Lambda_{[0,b],K}^\circ$ is also an ideal of definition.
\end{proof}

We can be more explicit about the structure of $\Lambda_{[0,b],K}$ when $b$ is small.  Recall that $\overline\pi:=\varepsilon-1\in\mathscr{O}_{\C_p}^\flat$, so that it is a uniformizer of $\E_F$.  Then for any ramified extension $K/F$, we may choose a uniformizer $\overline\pi_K$ of $\E_K$, and we may lift $\overline{\pi}_K$ to $\pi_K\in\A$.  We fix a choice of $\pi_K$ for every $K$ and work with it throughout (when $F/\Q_p$ is unramified, we set $\pi_F=\pi$).  Let $F'$ be the maximal unramified extension of $F$ in $K_\infty$ and let $\mathscr{A}_{F'}^{(0,b]}:=\{\sum_{m\in\Z}a_mX^m:a_m\in\mathscr{O}_{F'}, v_p(a_m)+mb\rightarrow\infty\}$ be the ring of integers of the ring of bounded analytic functions on the half-open annulus $0<v_p(X)\leq b$ over $F'$.  Then \cite[Proposition 7.5]{colmez} states that for $b<r_K$ (where $r_K$ is a constant depending on the ramification of $\E_K/\E_F$), the assignment $f\mapsto f(\pi_K)$ is an isomorphism of topological rings from $\mathscr{A}_{F'}^{(0,bv_{\C_p^\flat}(\pi_K)]}$ to $\Lambda_{[0,b],K}$.  Furthermore, if we define a valuation $v^{(0,b]}$ on $\mathscr{A}_{F'}^{(0,b]}$ by $v^{(0,b]}(\sum_{m\in \Z}a_mX^m):=\inf_{m\in\Z}(v_p(a_m)+mb)$, then 
\[	\val^{(0,b]}(f(\pi_K))=\frac{1}{b}v^{(0,b]}(f)	\]
It follows that after inverting $p$, we have an isomorphism from the ring of bounded analytic functions on the half-open annulus to $\Lambda_{[0,b],K}[1/p]$, equipped with the valuation $\val^{(0,b]}$.  Note that when $K/\Q_p$ is ramified, this isomorphism depends on a choice of uniformizer of $\E_K$.

\section{Rings with coefficients}\label{section: rings with coefficients}

Now we wish to introduce coefficients.  We wish to consider Galois representations with coefficients in \emph{pseudoaffinoid algebras}, in the sense of~\cite{lourenco} and~\cite{johansson-newton} (or more generally, Galois representations on vector bundles over pseudorigid spaces).

\begin{definition}
Let $E$ be a discretely-valued non-archimedean field and let $\mathscr{O}_E$ be its ring of integers.  A \emph{pseudoaffinoid $\mathscr{O}_E$-algebra} is a complete Tate $\mathscr{O}_E$-algebra $R$ which has a ring of definition $R_0$ that is formally of finite type over $\mathscr{O}_E$.  A \emph{pseudorigid space} over $\mathscr{O}_E$ is an adic space $X$ over $\Spa(\mathscr{O}_E)$ which is locally of the form $\Spa(R,R^\circ)$ for a pseudoaffinoid $\mathscr{O}_E$-algebra $R$.
\end{definition}
When $R$ is a pseudoaffinoid algebra, we will write $\Spa R$ for $\Spa(R,R^\circ)$.

Let $R$ be a  pseudoaffinoid algebra over $\Z_p$, and let $u\in R$ be a pseudo-uniformizer.  Throughout this section, we assume that $p\notin R^\times$, since if $R$ has the $p$-adic topology, we are in the classical setting treated in~\cite{berger-colmez}.  Borrowing the notation of ~\cite{lourenco}, for any positive rational number $\lambda=\frac{m'}{m}$ with $(m,m')=1$ and $m,m'>0$, we let $(D_\lambda, D_\lambda^\circ)$ denote the pair 
\[	\left(\Z_p[\![u]\!]\left\langle\frac{p^m}{u^{m'}}\right\rangle\left[\frac 1 u\right], \Z_p[\![u]\!]\left\langle\frac{p^m}{u^{m'}}\right\rangle\right)	\]
Then by~\cite[Lemma 4.8]{lourenco}, there is some sufficiently small $\lambda$ such that $R$ is topologically of finite type over $D_\lambda$.  In particular, there is a ring of definition $R_0\subset R$ which is itself strictly topologically of finite type over $D_\lambda^\circ$.

Every element of $D_\lambda$ can be written uniquely as a power series $\sum_{i\in\Z}a_iu^i$ with $a_i\in\Z_p$, and we define a valuation $v_{D_\lambda}$ on $D_\lambda$ via 
\[	v_{D_\lambda}\left(\sum_{i\in\Z}a_iu^i\right):= \inf_i \left\{v_p(a_i)+\frac{i}{\lambda}\right\}	\]
If $R$ is a topologically of finite type over $D_\lambda$, each presentation of $R$ over $D_\lambda$ will induce a valuation on $R$.  Different presentations will induce different (but equivalent) valuations, however, just as in the classical setting.

If $\lambda<\lambda'$ with $\lambda=\frac{m'}{m}$ and $\lambda'=\frac{n'}{n}$, there is a natural map $D_\lambda\rightarrow D_{\lambda'}$.  Indeed, $\left(\frac{p^m}{u^{m'}}\right)^{n'} =  p^{mn'-m'n}\left(\frac{p^n}{u^{n'}}\right)^{m'}$; since $mn'-m'n>0$, we see that $\frac{p^m}{u^{m'}}$ is power-bounded in $D_{\lambda'}$. Thus, we have a totally ordered inverse system of adic space $\{D_\lambda\}$; since they are uniform, the inverse limit exists, and it is straightforward to check that it is equal to $\Spa(\F_p(\!(u)\!))$.
 
We define a descending sequence of ideals $I_j\subset R_0$ via $I_j:=p^jR\cap R_0$.  Then for all $j\geq 1$, $R_0/I_j$ is a $u$-torsion-free $(\Z/p^j)[\![u]\!]$-algebra.  Since $R_0$ is noetherian, the $I_j$ are finitely generated, and $I_j/I_{j+1}$ is a finite $u$-torsion-free $R_0/I_1$-module.

In this section, we will construct and study perfect and imperfect overconvergent period rings ``with coefficients in $R$''.  The rings we construct will depend on our choices of both $R_0$ and $u$, but for compactness of notation, we suppress $u$ from the notation.

\subsection{Perfect overconvergent rings}

We fix a pseudoaffinoid $\mathcal{O}_E$-algebra $R$, for some finite extension $E/\Q_p$, and we fix a ring of definition $R_0\subset R$ and pseudouniformizer $u\in R_0$ such that $R_0$ is strictly topologically of finite type over $D_\lambda^\circ$ for some sufficiently small $\lambda\in \Q_{>0}$.

The adic space $\mathcal{Y}$ is covered by the two open subspaces $\mathcal{Y}_{(0,\infty]}$ and $\mathcal{Y}_{[0,\infty)}$, which are the subspaces where $p\neq 0$ and $[\varpi]\neq 0$, respectively.  Thus, to study the fiber products $\mathcal{Y}_{R_0}:=\Spa R_0\times \mathcal{Y}$ and $\mathcal{Y}_R:=\Spa R\times \mathcal{Y}$, it suffices to study the fiber products of $\Spa R_0$ or $\Spa R$ with each of these subspaces.  We are primarily interested in $\Spa R_0 \times_{\Z_p}\mathcal{Y}_{[0,\infty)}$ and $\Spa R \times_{\Z_p}\mathcal{Y}_{[0,\infty)}$ (since if $p$ is invertible, the theory reduces to the case of classical rigid analytic spaces).

We define perfect overconvergent rings with coefficients in $R$:
\begin{definition}
	Fix $a\in\Q_{\geq 0}$ and $b\in\Q_{>0}\cup\{\infty\}$ such that $a\leq b$.  Then we define $\widetilde\Lambda_{R_0,[a,b]}$ to be the evaluation of $\mathscr{O}_{\mathcal{Y}_{R_0}}$ on the affinoid subspace defined by the conditions $u\leq [\varpi]^{1/b}\neq 0$ and $[\varpi]^{1/a}\leq u\neq 0$.  We let $\widetilde\Lambda_{R_0,[a,b],0}\subset \widetilde\Lambda_{R_0,[a,b]}$ be the ring of definition $(R_0\htimes\A_{\mathrm{inf}})\left\langle\frac{u}{[\varpi]^{1/b}},\frac{[\varpi]^{1/a}}{u}\right\rangle$, and we let $\widetilde\Lambda_{R,[a,b]}:=\widetilde\Lambda_{R_0,[a,b]}\left[\frac 1 u\right]$ and $\widetilde\Lambda_{R,[a,b],0}:=\widetilde\Lambda_{R_0,[a,b],0}\left[\frac 1 u\right]$.  Here we fix the conventions $\frac{u}{[\varpi]^{1/\infty}}:=u$ and $\frac{[\varpi]^{1/0}}{u}:=0$, so that $\Lambda_{R_0,[0,\infty]}=R_0\htimes\A_{\mathrm{inf}}$.

We also make auxiliary definitions 
\[	\widetilde\Lambda_{R_0,[0,0]}:=(R_0\htimes\A_{\mathrm{inf}})\left[\frac{1}{[\varpi]}\right]^\wedge \qquad\text{ and }\qquad	\widetilde\Lambda_{R,[0,0]}:=\widetilde\Lambda_{R_0,[0,0]}\left[\frac 1 u\right]	\]
where the completion is $u$-adic. We give $\widetilde\Lambda_{R,[0,0]}$ the \emph{weak topology} generated by the basis 
\[	\left\{u^k(R_0\htimes\A_{\mathrm{inf}})\left\langle\frac{1}{[\varpi]}\right\rangle+[\varpi]^n(R_0\htimes\A_{\mathrm{inf}})\right\}_{k,n}	\]
\end{definition}
Note that $\widetilde\Lambda_{R_0,[a,b]}$ and $\widetilde\Lambda_{R,[a,b]}$ only differ when $a=0$ (since otherwise both rings contain $\frac 1 u$).

The motivation for this definition is to describe natural affinoid subspaces of 
\[	\mathcal{Y}_{R_0}=\cup_{r>0}\Spa R_0 \times_{\Z_p}\Spa(\widetilde\Lambda_{[0,r]},\widetilde\Lambda_{[0,r]}^\circ)	\]
and 
\[	\mathcal{Y}_R=\cup_{r>0}\Spa R \times_{\Z_p}\Spa(\widetilde\Lambda_{[0,r]},\widetilde\Lambda_{[0,r]}^\circ)	\]
These will be pre-adic spaces (and in general not affinoid or quasi-compact), and they will be exhausted by pre-adic spaces of the form
\[	\Spa^{\mathrm{ind}}\left((R_0\htimes_{\Z_p}\widetilde\Lambda_{[0,r]}^{\circ})\left\langle\frac{u}{[\varpi]^{1/b}}\right\rangle\left[\frac{1}{[\varpi]}\right],\left((R_0\otimes_{\Z_p}\widetilde\Lambda_{[0,r]}^{\circ})\left[ \frac{u}{[\varpi]^{1/b}}\right]^{\mathrm{int}}\right)^\wedge\right)	\]
or
\begin{equation*}
\resizebox{\displaywidth}{!}{$
\Spa^{\mathrm{ind}}\left((R_0\htimes_{\Z_p}\widetilde\Lambda_{[0,r]}^\circ)\left\langle \frac{u}{[\varpi]^{1/b}},\frac{[\varpi]^b}{u}\right\rangle\left[\frac{1}{[\varpi]}, \frac 1 u\right], \left((R^+\otimes_{\Z_p}\widetilde\Lambda_{[0,r]}^\circ)\left[ \frac{u}{[\varpi]^{1/b}},\frac{[\varpi]^b}{u}\right]^{\mathrm{int}}\right)^\wedge\right)
$}
\end{equation*}
respectively, where $\Spa^{\mathrm{ind}}$ is as in Definition~\ref{def: pre-adic} (which is taken from ~\cite{scholze-weinstein-berkeley}). Here $b$ is a positive rational number, $(R_0\otimes_{\Z_p}\widetilde\Lambda_{[0,r]}^{\circ})\left[ \frac{u}{[\varpi]^{1/b}}\right]^{\mathrm{int}}$ denotes the integral closure of $(R_0\otimes_{\Z_p}\widetilde\Lambda_{[0,r]})\left[\frac{u}{[\varpi]^{1/b}}\right]$ in $(R_0\htimes_{\Z_p}\widetilde\Lambda_{[0,r]}^\circ)\left\langle \frac{u}{[\varpi]^{1/b}},\frac{[\varpi]^b}{u}\right\rangle\left[\frac{1}{[\varpi]}\right]$, and $(R^\circ\otimes_{\Z_p}\widetilde\Lambda_{[0,r]}^\circ)\left[ \frac{u}{[\varpi]^{1/b}},\frac{[\varpi]^b}{u}\right]^{\mathrm{int}}$ denotes the integral closure of $(R^\circ\otimes_{\Z_p}\widetilde\Lambda_{[0,r]}^\circ)\left[ \frac{u}{[\varpi]^{1/b}},\frac{[\varpi]^b}{u}\right]$ in $(R_0\otimes_{\Z_p}\widetilde\Lambda_{[0,r]}^\circ)\left[ \frac{u}{[\varpi]^{1/b}},\frac{[\varpi]^b}{u}\right]\left[\frac{1}{[\varpi]}, \frac 1 u\right]$.

\begin{proposition}\label{prop: lambda-r-r-o-d}
	Suppose that $R$ is a $D_\lambda$-algebra and $r\in \Q_{>0}\cup\{\infty\}$ satisfies $r\geq \frac{b}{\lambda}$. Then there is a continuous homomorphism 
\[	(R_0\htimes_{\Z_p}\widetilde\Lambda_{[0,\frac{b}{\lambda}]}^\circ)\left\langle \frac{u}{[\varpi]^{1/b}}\right\rangle\left[\frac{1}{[\varpi]}\right]\rightarrow(R_0\htimes_{\Z_p}\widetilde\Lambda_{[0,r]}^\circ)\left\langle\frac{u}{[\varpi]^{1/b}}\right\rangle\left[\frac{1}{[\varpi]}\right]        \]
and the image of $(R_0\htimes\widetilde\Lambda_{[0,\frac{b}{\lambda}]}^\circ)\left\langle \frac{u}{[\varpi]^{1/b}}\right\rangle$ in $(R_0\htimes_{\Z_p}\widetilde\Lambda_{[0,r]}^\circ)\left\langle\frac{u}{[\varpi]^{1/b}}\right\rangle\left[\frac{1}{[\varpi]}\right]$ is a ring of definition.
In particular, the image of $(R_0\htimes\widetilde\Lambda_{[0,\frac{b}{\lambda}]}^\circ)\left\langle \frac{u}{[\varpi]^{1/b}}\right\rangle$ in $\Lambda_{R_0,[0,\infty]}\left\langle\frac{u}{[\varpi]^{1/b}}\right\rangle\left[\frac{1}{[\varpi]}\right]$ is a ring of definition.  
\end{proposition}
\begin{remark}
	We write $\widetilde\Lambda_{R_0,[a,b],0,\lambda}$ to denote the image of the ring of definition 
	\[	(R_0\htimes\widetilde\Lambda_{[0,\frac{b}{\lambda}]})\left\langle\frac{u}{[\varpi]^{1/b}},\frac{[\varpi]^{1/a}}{u}\right\rangle\subset (R_0\htimes_{\Z_p}\widetilde\Lambda_{[0,\frac{b}{\lambda}]}^\circ)\left\langle \frac{u}{[\varpi]^{1/b}}\right\rangle\left[\frac{1}{[\varpi]}\right]	\]
	inside $\widetilde\Lambda_{R_0,[a,b]}$.
\end{remark}
\begin{proof}
	Since $R_0$ is a $D_\lambda^\circ$-algebra, we may compute that
	\[      \left(\frac{p^m}{u^{m'}}\right)\cdot \left(\frac{u}{[\varpi]^{1/b}}\right)^{m'}= \frac{p^m}{[\varpi]^{m'/b}}=\left(\frac{p}{[\varpi]^{\lambda/b}}\right)^m      \]
in $(R_0\htimes\widetilde\A_{\mathrm{inf}})\left\langle\frac{u}{[\varpi]^{1/b}}\right\rangle\left[\frac{1}{[\varpi]}\right]$. 
It follows that $\frac{p}{[\varpi]^{\lambda/b}}$ is power-bounded in $(R_0\htimes\A_{\mathrm{inf}})\left\langle\frac{u}{[\varpi]^{1/b}}\right\rangle\left[\frac{1}{[\varpi]}\right]$, hence in $(R_0\htimes\widetilde\Lambda_{[0,r]})\left\langle\frac{u}{[\varpi]^{1/b}}\right\rangle\left[\frac{1}{[\varpi]}\right]$, and we obtain the desired continuous maps
\[      (R_0\htimes_{\Z_p}\widetilde\Lambda_{[0,\frac{b}{\lambda}]}^\circ)\left\langle \frac{u}{[\varpi]^{1/b}}\right\rangle\left[\frac{1}{[\varpi]}\right]\rightarrow(R_0\htimes_{\Z_p}\widetilde\Lambda_{[0,r]}^\circ)\left\langle\frac{u}{[\varpi]^{1/b}}\right\rangle\left[\frac{1}{[\varpi]}\right]        \]

On the other hand, if $r\geq \frac{b}{\lambda}$, then $(R_0\htimes_{\Z_p}\widetilde\Lambda_{[0,\frac{b}{\lambda}]}^\circ)\left\langle\frac{u}{[\varpi]^{1/b}}\right\rangle\left[\frac{1}{[\varpi]}\right]$ is an $R_0\htimes_{\Z_p}\widetilde\Lambda_{[0,r]}^\circ$-algebra in which $[\varpi]$ is invertible and $\frac{u}{[\varpi]^{1/b}}$ is power-bounded. Therefore, there is a canonical continuous map 
\[	(R_0\htimes_{\Z_p}\widetilde\Lambda_{[0,r]}^\circ)\left\langle\frac{u}{[\varpi]^{1/b}}\right\rangle\left[\frac{1}{[\varpi]}\right]\rightarrow(R_0\htimes\widetilde\Lambda_{[0,\frac{b}{\lambda}]}^\circ)\left\langle \frac{u}{[\varpi]^{1/b}}\right\rangle\left[\frac{1}{[\varpi]}\right]	\]
These maps are clearly inverse to one another, and since they are continuous, the images of $(R_0\htimes_{\Z_p}\widetilde\Lambda_{[0,\frac{b}{\lambda}]}^\circ)\left\langle\frac{u}{[\varpi]^{1/b}}\right\rangle$ and $(R_0\htimes_{\Z_p}\widetilde\Lambda_{[0,r]}^\circ)\left\langle\frac{u}{[\varpi]^{1/b}}\right\rangle$ are both rings of definition.
\end{proof}

In other words, for any $r\in\Q_{\geq 0}$ and any choice of $b\leq r\lambda$, the pre-adic space 
\[	\Spa\left((R_0\htimes_{\Z_p}\widetilde\Lambda_{[0,r]}^\circ)\left\langle \frac{u}{[\varpi]^{1/b}}\right\rangle\left[\frac{1}{[\varpi]}\right], (R_0\htimes_{\Z_p}\widetilde\Lambda_{[0,r]}^\circ)\left\langle \frac{u}{[\varpi]^{1/b}}\right\rangle^{\mathrm{int}}\right)	\]
is isomorphic to the rational localization $\Spa\Lambda_{R_0,[0,\infty]}\left\langle\frac{u}{[\varpi]^{1/b}}\right\rangle$.  

\begin{corollary}
The pre-adic space $\Spa R_0\times_{\Spa(\Z_p)}\mathcal{Y}_{[0,\infty)}$ is exhausted by open subspaces of the form $\Spa\Lambda_{R_0,[0,\infty]}\left\langle\frac{u}{[\varpi]^{1/b}}\right\rangle$ for positive rational numbers $b\in\Q_{>0}$.
\end{corollary}

Similarly, we prove
\begin{corollary}
The pre-adic space $\Spa R\times_{\Spa(\Z_p)}\mathcal{Y}_{[0,\infty)}$ is exhausted by open subspaces of the form $\Spa\Lambda_{R_0,[0,\infty]}\left\langle\frac{u}{[\varpi]^{1/b}},\frac{[\varpi]^{1/a}}{u}\right\rangle$, for positive rational numbers $a,b\in\Q_{>0}$ with $a\leq b$.
\end{corollary}

\begin{remark}
If $R$ is a ring such that $p=0$, the natural maps 
\[	(R_0\htimes\widetilde\Lambda_{[0,\frac{b}{\lambda}]}^\circ)\left\langle\frac{u}{[\varpi]^{1/b}}\right\rangle\rightarrow \Lambda_{R_0,[0,\infty]}\left\langle\frac{u}{[\varpi]^{1/b}}\right\rangle\left[\frac{1}{[\varpi]}\right]	\]
are not injective, because $\widetilde\Lambda_{[0,\frac{b}{\lambda}]}^\circ/p$ has $[\varpi]$-torsion.  However, the images still yield rings of definition.  In fact, $\widetilde\Lambda_{R_0,[a,b],0,\lambda}=\widetilde\Lambda_{R_0,[a,b],0}$.
\end{remark}

If $b\in (0,\infty)$, $\widetilde\Lambda_{R_0,[a,b]}$ is a Tate ring with pseudo-uniformizer $[\varpi]$.  We may define valuations $v_{R,[a,b]}$ and $v_{R,[a,b],\lambda}$ on it via
\[	v_{R,[a,b]}(x):=\sup_{\alpha\in\C_p^\flat:[\alpha]x\in \widetilde\Lambda_{R_0,[a,b],0}}-v_{\C_p^\flat}(\alpha)	\]
and
\[	v_{R,[a,b],\lambda}(x):=\sup_{\alpha\in\C_p^\flat:[\alpha]x\in \widetilde\Lambda_{R_0,[a,b],0,\lambda}}-v_{\C_p^\flat}(\alpha)	\]
When $a=0$, we also denote these valuations by $v_{R,b}$ and $v_{R,b,\lambda}$, respectively.

If $[a,b]\subset(0,\infty)$, then ideals of definition of $\widetilde\Lambda_{R_0,[a,b],0}$ are principal, and $u$ and $[\varpi]$ both generate ideals of definition.  In particular, $\widetilde\Lambda_{R_0,[a,b]}=\widetilde\Lambda_{R,[a,b]}$ is Tate with pseudo-uniformizers $u$ and $[\varpi]$.

If $(R,R^\circ)=(\Q_p,\Z_p)$ with $u=p$, this definition recovers $\widetilde\Lambda_{[a,b]}$.

We have seen that if $p^mu^{-m'}$ is power-bounded in $R$, then there is a natural map $(R_0\htimes\widetilde\Lambda_{[0,\frac{b}{\lambda}]}^\circ)\left\langle\frac{u}{[\varpi]^{1/b}}\right\rangle\rightarrow \widetilde\Lambda_{R_0,[0,b]}$, whose image is a ring of definition.  In particular, the image of $\widetilde\Lambda_{[0,\frac{b}{\lambda}]}^\circ$ is bounded.  The next proposition shows that $\widetilde\Lambda_{R_0,[a,b],0,\lambda}$ behaves better than $\widetilde\Lambda_{R_0,[a,b],0}$ for making some comparisons with the classical story:
\begin{proposition}\label{prop: valuation bound}
	If $x\in \widetilde\Lambda_{[0,\frac{b}{\lambda}]}$, then $v_{R,b}(x)\geq v_{b/\lambda}(x)-\frac{(m-1)\lambda}{b}$.  If $R_0=D_\lambda^\circ$ for some $\lambda=\frac{m'}{m}$, then $v_{b/\lambda}(x)\geq v_{R,b}(x)$, as well.  Similarly, $v_{R,b,\lambda}(x)\geq v_{b/\lambda}(x)$, and if $R_0=D_\lambda^\circ$, we have equality.
\end{proposition}
\begin{proof}
It suffices to consider the image of $\frac{p^k}{[\varpi]^{k\lambda/b}}$ for all $k\geq 0$.  Since the image of $\frac{p^k}{[\varpi]^{k\lambda/b}}$ is in $\widetilde\Lambda_{R_0,[0,b],0,\lambda}$, we see that $v_{R,b,\lambda}(x)\geq v_b(x)$.  Additionally, since $\left(\frac{p}{[\varpi]^{\lambda/b}}\right)^m\in \widetilde\Lambda_{R_0,[0,b],0}$, we may assume $0\leq k< m$. But then we may multiply by $[\varpi]^{k\lambda/b}$ to get an element of $\widetilde\Lambda_{R_0,[0,b],0}$, so 
\[	v_{R,b}\left(\frac{p^k}{[\varpi]^{k\lambda/b}}\right)\geq -\frac{k\lambda}{b}\geq -\frac{(m-1)\lambda}{b}	\]

When $R_0=D_\lambda^\circ$, we observe that $\left(\frac{p}{[\varpi]^{\lambda/b}}\right)^m=\left(\frac{p^m}{u^{m'}}\right)\cdot\left(\frac{u}{[\varpi]^{1/b}}\right)^{m'}$, and we claim that 
\[	v_{R,b}\left(\left(\frac{p^m}{u^{m'}}\right)\cdot\left(\frac{u}{[\varpi]^{1/b}}\right)^{m'}\right)=0	\]
and 
\[	v_{R,b,\lambda}\left(\left(\frac{p^m}{u^{m'}}\right)\cdot\left(\frac{u}{[\varpi]^{1/b}}\right)^{m'}\right)=0	\]
Then 
\[	0\geq m\cdot v_{R,b}\left(\frac{p}{[\varpi]^{\lambda/b}}\right), m\cdot v_{R,b,\lambda}\left(\frac{p}{[\varpi]^{\lambda/b}}\right)	\]
so 
\[	0\geq v_{R,b}\left(\frac{p}{[\varpi]^{\lambda/b}}\right), v_{R,b,\lambda}\left(\frac{p}{[\varpi]^{\lambda/b}}\right)	\]
On the other hand, $v_{b/\lambda}\left(\frac{p}{[\varpi]^{\lambda/b}}\right)=0$ and the result follows.

To prove the claims, we write 
\[	\widetilde\Lambda_{D_\lambda^\circ,[0,b],0}\cong\left(\left(\Z_p[\![u]\!]\left\langle X\right\rangle/(u^{m'}X-p^m)\right)\htimes\A_{\mathrm{inf}}\right)\left\langle Y\right\rangle/([\varpi]^{1/b}Y-u)	\]
and $\widetilde\Lambda_{D_\lambda^\circ,[0,b],0,\lambda}$ as the quotient of 
\begin{equation*}
\resizebox{\displaywidth}{!}{$
\left(\left(\Z_p[\![u]\!]\left\langle X\right\rangle/(u^{m'}X-p^m)\right)\htimes\left(\A_{\mathrm{inf}}\left\langle X'\right\rangle/([\varpi]^{\lambda/b}X'-p)\right)\right)\left\langle Y\right\rangle/([\varpi]^{1/b}Y-u)
$}
\end{equation*}
by $[\varpi]$-torsion (in particular, $XY^{m'}={X'}^m$). 
Then the claim is that if we reduce modulo the ideal $[\mathfrak{m}_{\mathscr{O}_{\C_p}^\flat}]$, the image of $X\cdot Y^{m'}$ is non-zero.  We actually compute in 
\[	\widetilde\Lambda_{D_\lambda^\circ,[0,b],0}/(p,[\mathfrak{m}_{\mathscr{O}_{\C_p}^\flat}])\cong \overline\F_p[X,Y]	\]
and
\[	\widetilde\Lambda_{D_\lambda^\circ,[0,b],0,\lambda}/[\mathfrak{m}_{\mathscr{O}_{\C_p}^\flat}]\cong \overline\F_p[X,X',Y]/(XY^{m'}-{X'}^m)	\]
which are visibly domains, so we are done.
\end{proof}
In the special case $p\in uR_0$ (in particular, if $p=0$ in $R$), this implies that there is a map $\widetilde\Lambda_{[0,b]}\rightarrow \widetilde\Lambda_{R,[0,b]}$ such that $v_{R,b}(x)\geq v_b(x)$ for all $x\in\widetilde\Lambda_{[0,b]}$.

\begin{remark}
The rings $\widetilde\Lambda_{R_0,[a,b]}$, $\widetilde\Lambda_{R_0,[a,b],0}$, and $\widetilde\Lambda_{R_0,[a.b],0,\lambda}$ depend on the choice of ring of definition $R_0\subset R$, but $\widetilde\Lambda_{R,[a,b]}$ and $\widetilde\Lambda_{R,[a,b],0}$ do not.  However, both depend on our choice of $u\in R$.
\end{remark}
\begin{proposition}
Let $f:R\rightarrow R'$ be a homomorphism of pseudoaffinoid rings, and let $R_0':=f(R_0)$ and $u':=f(u)$.  Then $R_0'\htimes_{R_0}\widetilde\Lambda_{R_0,[a,b]}=\widetilde\Lambda_{R_0',[a,b]}$.
\end{proposition}
\begin{proof}
By~\cite[Lemma 2.2.5]{johansson-newton}, $f$ is topologically of finite type , so $R_0'$ is a ring of definition of $R'$ formally of finite type over $\Z_p$.  Then 
\begin{equation*}
\resizebox{\displaywidth}{!}{$
\begin{split}
R_0'\htimes_{R_0}\widetilde\Lambda_{R_0,[a,b]} &= R_0'\htimes_{R_0}\Lambda_{R_0,[0,\infty]}\left\langle X,Y\right\rangle/([\varpi]^{1/b}X-u, uY-[\varpi]^{1/a}, XY-[\varpi]^{1/a-1/b})	\\
&=\Lambda_{R_0',[0,\infty]}\left\langle X,Y\right\rangle/([\varpi]^{1/b}X-u', u'Y-[\varpi]^{1/a}, XY-[\varpi]^{1/a-1/b}) 	\\
&= \widetilde\Lambda_{R_0',[a,b]}
\end{split}
$}
\end{equation*}
\end{proof}

We do not know whether $\mathcal{Y}_{R_0}$ and $\mathcal{Y}_R$ are adic spaces (even though $\Spa R_0$, $\Spa R$, and $\mathcal{Y}$ are).  However, we have the following partial result:
\begin{proposition}
Suppose $\{\Spa R_i\}_i$ is an affinoid cover of $\Spa R$, and let $\Spa R_{ij}:=\Spa R_i\cap\Spa R_j$.  Then if $[a,b]\subset [0,\infty)$ we have a strict exact sequence
\[	0\rightarrow \widetilde\Lambda_{R,[a,b]}\rightarrow\prod_i\widetilde\Lambda_{R_i,[a,b]}\rightarrow\prod_{i,j}\widetilde\Lambda_{R_{ij},[a,b]}	\]
\end{proposition}
\begin{proof}
We may assume that $\{\Spa R_i\}_i$ is a finite cover by rational subspaces of $\Spa R$, where $(R_i,R_i^\circ)=(R,R^\circ)\left\langle\frac{f_0,\ldots,f_n}{f_i} \right\rangle$ for some finite set $f_0,\ldots ,f_n\in R$ which generate the unit ideal.  We may further assume that $f_i\in R_0$ for all $i$.  The ring of definition $R_0$ is admissible in the sense of~\cite{bosch-lut}, so we may consider the scheme-theoretic blowing up $X\rightarrow \Spec R_0$ and the admissible formal blowing up $\mathcal{X}\rightarrow\Spf(R_0)$ along the ideal $(f_0,\ldots,f_n)$.  Then $\mathscr{O}_{X}\otimes_{R_0}\widetilde\Lambda_{R_0,[a,b],0}$ is a quasi-coherent $\mathscr{O}_{X}$-module, so we have an exact sequence
\begin{equation*}
	\begin{split}
0\rightarrow \Gamma(X,\mathscr{O}_{X})\otimes_{R_0}\widetilde\Lambda_{R_0,[a,b],0}\rightarrow & \prod_i R_0\left[\frac{f_0}{f_i},\ldots,\frac{f_n}{f_i} \right]\otimes_{R_0}\widetilde\Lambda_{R_0,[a,b],0} 	\\
&\rightarrow \prod_{i,j}R_0\left[\frac{f_0}{f_if_j},\ldots,\frac{f_n}{f_if_j} \right]\otimes_{R_0}\widetilde\Lambda_{R_0,[a,b],0}
\end{split}
\end{equation*}

The $[\varpi]$-adic completion
\begin{equation*}
	\begin{split}
0\rightarrow \Gamma(\mathcal{X},\mathscr{O}_{\mathcal{X}})\htimes_{R_0}\widetilde\Lambda_{R_0,[a,b],0}\rightarrow &\prod_i R_0\left\langle\frac{f_0}{f_i},\ldots,\frac{f_n}{f_i} \right\rangle\htimes_{R_0}\widetilde\Lambda_{R_0,[a,b],0}	\\
&\rightarrow \prod_{i,j}R_{0}\left\langle\frac{f_0}{f_i},\ldots,\frac{f_n}{f_i} \right\rangle\htimes_{R_0}\widetilde\Lambda_{R_0,[a,b],0}
\end{split}
\end{equation*}
is exact, as well, and since $\Gamma(\mathcal{X},\mathscr{O}_{\mathcal{X}})$ is $R_0$-finite and satisfies 
\[	\Gamma(\mathcal{X},\mathscr{O}_{\mathcal{X}})\left[\frac 1 u\right]=R	\]
the result now follows from inverting $u$.
\end{proof}

We have a similar sheaf property with respect to rational localization on $\mathcal{Y}$. 
Since $R_0\htimes_{\Z_p}\A_{\mathrm{inf}}$ has no $u$- or $[\varpi]$-torsion, if $[a,b]\supset [a',b']$, there is an injective map
\begin{equation*}
\Lambda_{R_0,[0,\infty]}\left[\frac{u}{[\varpi]^{1/b}},\frac{[\varpi]^{1/a}}{u}\right]\rightarrow \Lambda_{R_0,[0,\infty]}\left[\frac{u}{[\varpi]^{1/b'}},\frac{[\varpi]^{1/a'}}{u}\right]
\end{equation*}
(since both rings are subrings of $\Lambda_{R_0,[0,\infty]}\left[\frac{1}{[\varpi]},\frac{1}{u}\right]$).

\begin{proposition}\label{prop: inj overconvergent}
The above map extends to an injection 
\[	\Lambda_{R_0,[0,\infty]}\left\langle\frac{u}{[\varpi]^{1/b}},\frac{[\varpi]^{1/a}}{u}\right\rangle\rightarrow \Lambda_{R_0,[0,\infty]}\left\langle\frac{u}{[\varpi]^{1/b'}},\frac{[\varpi]^{1/a'}}{u}\right\rangle	\]
and therefore an injection $\widetilde\Lambda_{R,[a,b]}\rightarrow\widetilde\Lambda_{R,[a',b']}$
\end{proposition}
\begin{proof}
As in the proof of~\cite[Lemme 2.5]{berger}, the map $\widetilde\Lambda_{R_0,[a,b],0}\rightarrow\widetilde\Lambda_{R_0,[a',b'],0}$ factors as $\widetilde\Lambda_{R_0,[a,b],0}\rightarrow \widetilde\Lambda_{R_0,[a,b'],0}\rightarrow\widetilde\Lambda_{R_0,[a',b'],0}$, and we may assume that either $a=a'$ or $b=b'$.  We treat the case $a=a'=0$ here; the other cases follow as in~\cite{berger}.

We need to show that the natural map 
\[	\Lambda_{R_0,[0,\infty]}\left\langle X\right\rangle/([\varpi]^{1/b}X-u)\rightarrow \Lambda_{R_0,[0,\infty]}\left[X\right]_u^\wedge/([\varpi]X-1)	\]
is injective.  This map carries $f(X)\in \Lambda_{R_0,[0,\infty]}\left\langle X\right\rangle$ to $f(u[\varpi]^{1-1/b}X)$; to show it is injective, we need to check that if $f(u[\varpi]^{1-1/b}X)$ is a multiple of $[\varpi]X-1$ in $\Lambda_{R_0,[0,\infty]}\left[ X\right]_u^\wedge$, then $f(uX)$ is a multiple of $u[\varpi]X-u$ in $\Lambda_{R_0,[0,\infty]}\left\langle u[\varpi]^{1-1/b}X\right\rangle$.

Writing $f(uX)=([\varpi]X-1)g(X)$, where $g(X)=\sum_{j=0}^\infty c_jX^j$ and $c_j\rightarrow 0$ $u$-adically, we need to show that $c_j\in u^{j+1}[\varpi]^{j(1-1/b)}\Lambda_{R_0,[0,\infty]}$ for all $j\geq 0$.  We may also write $f(uX)=\sum_{j=0}^\infty d_j(u[\varpi]^{1-1/b}X)^j$, where $d_j\rightarrow 0$ $u$-adically.  Then for $j\geq 1$, we have $d_ju^j[\varpi]^{j(1-1/b)}=[\varpi]c_{j-1}-c_j$.

Since the $c_j$ tend to $0$ $u$-adically, for each $j$ there is some $N_j\gg j$ such that $c_{N_j}$ is a multiple of $u^j$.  This implies that $[\varpi]c_{N_j-1}$ is also a multiple of $u^j$, and since $(R_0/u^j)\htimes\A_{\mathrm{inf}}$ has no $[\varpi]$-torsion, $c_{N_j-1}$ itself is a multiple of $u^j$.  Repeating this argument, we see that $c_{j-1}$ is a multiple of $u^j$.

We may write $c_j=u^{j+1}c_j'$.  Since $\Lambda_{R_0,[0,\infty]}$ has no $u$-torsion, we have $d_j[\varpi]^{j(1-1/b)}=[\varpi]c_{j-1}'-uc_j'$ for all $j\geq 1$, which implies that $uc_1'$ is a multiple of $[\varpi]^{1-1/b}$.  But $R_0\htimes(\A_{\mathrm{inf}}/[\varpi]^{1-1/b})$ has no $u$-torsion, so $c_1'$ itself is a multiple of $[\varpi]^{1-1/b}$.  We now proceed by induction on $j$; if $c_{j-1}'$ is a multiple of $[\varpi]^{(j-1)(1-1/b)}$, then $uc_j'$ is a multiple of $[\varpi]^{j(1-1/b)}$, which implies that $c_j'$ itself is a multiple of $[\varpi]^{j(1-1/b)}$.
\end{proof}

\begin{proposition}\label{prop: partial sheafiness}
Suppose $[a,b]\neq[0,\infty]$. Then there is an exact sequence (of $R_0$-modules)
\begin{equation*}
	\begin{split}
0\rightarrow \Lambda_{R_0,[0,\infty]}\rightarrow \Lambda_{R_0,[0,\infty]}\left\langle\frac{u}{[\varpi]^{1/b}}\right\rangle\oplus&\Lambda_{R_0,[0,\infty]}\left\langle\frac{[\varpi]^{1/a}}{u}\right\rangle	\\
&\rightarrow \Lambda_{R_0,[0,\infty]}\left\langle\frac{u}{[\varpi]^{1/b}},\frac{[\varpi]^{1/a}}{u}\right\rangle\rightarrow 0
\end{split}
\end{equation*}
\end{proposition}
\begin{proof}
This is an adaptation of the proof of ~\cite[Lemme 2.15]{berger}.  We first treat the case where $a=b$.

The natural map
\begin{equation*}
	\begin{split}
\Lambda_{R_0,[0,\infty]}\left\langle\frac{u}{[\varpi]^{1/a}}\right\rangle\oplus\Lambda_{R_0,[0,\infty]}\left\langle\frac{[\varpi]^{1/a}}{u}\right\rangle\rightarrow \Lambda_{R_0,[0,\infty]}\left\langle\frac{u}{[\varpi]^{1/a}},\frac{[\varpi]^{1/a}}{u}\right\rangle
\end{split}
\end{equation*}
is clearly surjective, so it only remains to check that the kernel is exactly $R_0\htimes_{\Z_p}\A_{\mathrm{inf}}$.

We first verify this modulo $u$. 
\begin{equation*}
\resizebox{\displaywidth}{!}{$
\begin{split}
\left(\Lambda_{R_0,[0,\infty]}\left\langle\frac{u}{[\varpi]^{1/a}}\right\rangle\right)/(u)&\cong \Lambda_{R_0,[0,\infty]}\left[Y\right]/([\varpi]^{1/a}Y,u)	\\
\left(\Lambda_{R_0,[0,\infty]}\left\langle\frac{[\varpi]^{1/a}}{u}\right\rangle\right)/(u)&\cong (R_0/u)\otimes_{\Z_p}\A_{\mathrm{inf}}\left[X\right]/([\varpi]^{1/a})	\\
\left(\Lambda_{R_0,[0,\infty]}\left\langle\frac{u}{[\varpi]^{1/a}},\frac{[\varpi]^{1/a}}{u}\right\rangle\right)/(u) &\cong (R_0/u)\otimes_{\Z_p}\A_{\mathrm{inf}}\left[X,X^{-1}\right]/([\varpi]^{1/a})
\end{split}
$}
\end{equation*}
Moreover, the map 
\[	\left(\Lambda_{R_0,[0,\infty]}\left\langle\frac{u}{[\varpi]^{1/a}}\right\rangle\right)/(u)\rightarrow \left(\Lambda_{R_0,[0,\infty]}\left\langle\frac{u}{[\varpi]^{1/a}},\frac{[\varpi]^{1/a}}{u}\right\rangle\right)/(u)	\]
 is given by $Y\mapsto X^{-1}$, and it factors through 
\[	\left(\Lambda_{R_0,[0,\infty]}\left\langle\frac{u}{[\varpi]^{1/a}}\right\rangle\right)/(u,[\varpi]^{1/a})=(R_0/u)\otimes_{\Z_p}\A_{\mathrm{inf}}[Y]/([\varpi]^{1/a})	\]

But the intersection 
\begin{equation*}
\left((R_0/u)\otimes\A_{\mathrm{inf}}/([\varpi]^{1/a})\right)[X]\cap \left((R_0/u)\otimes\A_{\mathrm{inf}}/([\varpi]^{1/a})\right)[X^{-1}]
\end{equation*}
inside $\left((R_0/u)\otimes\A_{\mathrm{inf}}/([\varpi]^{1/a})\right)[X,X^{-1}]$ is just $(R_0/u)\otimes\A_{\mathrm{inf}}/([\varpi]^{1/a})$.  It follows that the image of 
\[	\Lambda_{R_0,[0,\infty]}\left\langle\frac{u}{[\varpi]^{1/a}}\right\rangle\cap \Lambda_{R_0,[0,\infty]}\left\langle\frac{[\varpi]^{1/a}}{u}\right\rangle	\]
is contained in the image of $\Lambda_{R_0,[0,\infty]}$.

Thus, given $x\in \Lambda_{R_0,[0,\infty]}\left\langle\frac{u}{[\varpi]^{1/a}}\right\rangle\cap\Lambda_{R_0,[0,\infty]}\left\langle\frac{[\varpi]^{1/a}}{u}\right\rangle$, there is some element $y\in \Lambda_{R_0,[0,\infty]}$ such that $x-y\in (u)\Lambda_{R_0,[0,\infty]}\left\langle\frac{u}{[\varpi]^{1/a}},\frac{[\varpi]^{1/a}}{u}\right\rangle$.  By considering the reductions of all of these rings modulo $u$, this implies that 
\[	x-y\in u\Lambda_{R_0,[0,\infty]}\left\langle\frac{[\varpi]^{1/a}}{u}\right\rangle	\]
and 
\[	x-y\in u\Lambda_{R_0,[0,\infty]}\left\langle\frac{u}{[\varpi]^{1/a}}\right\rangle+[\varpi]^{1/a}\Lambda_{R_0,[0,\infty]}	\]

There is therefore some $z\in [\varpi]^{1/a}\Lambda_{R_0,[0,\infty]}$ such that 
\[	x-y-z\in u\Lambda_{R_0,[0,\infty]}\left\langle\frac{u}{[\varpi]^{1/a}}\right\rangle	\]
Since $\frac{[\varpi]^{1/a}}{u}\in \Lambda_{R_0,[0,\infty]}\left\langle\frac{[\varpi]^{1/a}}{u}\right\rangle$ and the intersection 
\[	\Lambda_{R_0,[0,\infty]}\left\langle\frac{u}{[\varpi]^{1/a}}\right\rangle\cap\Lambda_{R_0,[0,\infty]}\left\langle\frac{[\varpi]^{1/a}}{u}\right\rangle	\]
inside $\Lambda_{R_0,[0,\infty]}\left\langle \frac{u}{[\varpi]^{1/a}},\frac{[\varpi]^{1/a}}{u}\right\rangle$ has no $u$-torsion, we have 
\[	x-y-z\in u\left(\Lambda_{R_0,[0,\infty]}\left\langle\frac{u}{[\varpi]^{1/a}}\right\rangle\cap\Lambda_{R_0,[0,\infty]}\left\langle\frac{[\varpi]^{1/a}}{u}\right\rangle\right)	\]
Since $\Lambda_{R_0,[0,\infty]}$ is $(u,[\varpi]^{1/a})$-adically separated and complete, we can iterate this argument, and the conclusion follows.

To handle the general case, we consider the diagram
\begin{equation*}
\resizebox{\displaywidth}{!}{
$
\begin{tikzcd}[ampersand replacement=\&]
0 \ar[r]	\& \Lambda_{R_0,[0,\infty]}\arrow[hook,r]\arrow[equal,d]	\& \Lambda_{R_0,[0,\infty]}\left\langle\frac{u}{[\varpi]^{1/b}}\right\rangle\oplus\Lambda_{R_0,[0,\infty]}\left\langle\frac{[\varpi]^{1/a}}{u}\right\rangle\arrow[two heads,r]\arrow[hook,d]	\& \Lambda_{R_0,[0,\infty]}\left\langle\frac{u}{[\varpi]^{1/b}},\frac{[\varpi]^{1/a}}{u}\right\rangle \ar[r]\arrow[hook,d]	\& 0	\\
0 \ar[r]	\& \Lambda_{R_0,[0,\infty]}\arrow[hook,r]	\& \Lambda_{R_0,[0,\infty]}\left\langle\frac{u}{[\varpi]^{1/a}}\right\rangle\oplus\Lambda_{R_0,[0,\infty]}\left\langle\frac{[\varpi]^{1/a}}{u}\right\rangle\arrow[two heads,r]	\& \Lambda_{R_0,[0,\infty]}\left\langle\frac{u}{[\varpi]^{1/a}},\frac{[\varpi]^{1/a}}{u}\right\rangle \arrow[r]	\& 0
\end{tikzcd}
$
}
\end{equation*}
Since the bottom row is exact, and the top row is exact except possibly in the middle, and the vertical arrows are injections, a diagram chase shows that the top row is exact.
\end{proof}

\begin{corollary}
Suppose $I_1=[a_1,b_1]$ and $I_2=[a_2,b_2]$ are intervals such that $I_1\cap I_2\neq \emptyset$.  Then $\widetilde\Lambda_{R_0,I_1,0}\cap\widetilde\Lambda_{R_0,I_2,0}=\widetilde\Lambda_{R_0,I_1\cup I_2,0}$ (where the intersection is taken inside $\widetilde\Lambda_{R_0,I_1\cap I_2,0}$).
\end{corollary}
\begin{proof}
We may assume that $a_1\leq a_2\leq b_1\leq b_2$, so that $I_1\cap I_2=[a_2,b_1]$ and $I_1\cup I_2=[a_1,b_2]$. 
If $f\in \widetilde\Lambda_{R_0,I_1,0}\cap\widetilde\Lambda_{R_0,I_2,0}$, we may write
\[	f=g_1+h_1=g_2+h_2	\]
with 
\begin{equation*}
	\begin{split}
		g_1\in\widetilde\Lambda_{R_0,[a_1,\infty],0}\qquad \text{ and } \qquad h_1\in\widetilde\Lambda_{R_0,[0,b_1],0}	\\
		g_2\in\widetilde\Lambda_{R_0,[a_2,\infty],0}\qquad \text{ and } \qquad h_2\in\widetilde\Lambda_{R_0,[0,b_2],0}
	\end{split}
\end{equation*}
Then $(g_1-g_2)+(h_1-h_2)=0$, with 
\[	g_1-g_2\in\widetilde\Lambda_{R_0,[a_2,\infty],0}\qquad \text{ and }\qquad h_1-h_2\in\widetilde\Lambda_{R_0,[0,b_1],0}	\]
	It follows from the previous proposition that 
	\[	g_1-g_2=h_2-h_1=f'\in \Lambda_{R_0,[0,\infty]}	\]
	Then $f=(g_1-f')+(h_1+f')=(g_1-f')+h_2$; since $g_1-f'\in\widetilde\Lambda_{R_0,[a_1,\infty],0}$ and $h_2\in \widetilde\Lambda_{R_0,[0,b_2],0}$, we are done.
\end{proof}

\subsection{The action of $H_K$ on $\widetilde\Lambda_{R,[a,b]}$}\label{subsection: H_K-invts}

Since $v_{\C_p^\flat}([\varpi])=\frac{p-1}{p}v_{\C_p^\flat}([\overline\pi])$, we define $s:(0,\infty)\rightarrow (0,\infty)$ via $s(a)=\frac{p-1}{pa}$.  From now on, we assume that $s(a), s(b)\in p^{\Z}$.  Then we can rewrite $\widetilde\Lambda_{R_0,[a,b],0}$ as the rational localization $\Lambda_{R_0,[0,\infty]}\left\langle\frac{u}{[\overline\pi]^{s(b)}},\frac{[\overline\pi]^{s(a)}}{u}\right\rangle$.  Since $H_K$ acts trivially on $u$ and $[\overline\pi]$, this implies that the action of $H_K$ on $\A_{\mathrm{inf}}$ induces an action on $\widetilde\Lambda_{R,[a,b]}$.

If $(R,R^\circ)=(\Q_p,\Z_p)$ and $u=p$, it is easy to read off from the Witt vector description of $(\widetilde\Lambda_{[0,r]},\widetilde\Lambda_{[0,r]}^\circ)$ that
\[	\left(\widetilde\Lambda_{[0,r]},\widetilde\Lambda_{[0,r]}^\circ\right)^{H_K}=\left(\A_{\mathrm{inf}}^{H_K}\left\langle\frac{p}{[\overline\pi]^{s(r)}}\right\rangle\left[\frac{1}{[\overline\pi]}\right], \A_{\mathrm{inf}}^{H_K}\left\langle\frac{p}{[\overline\pi]^{s(r)}}\right\rangle\right)	\]
Moreover, it follows from ~\cite[Lemme 2.29]{berger} that 
\begin{equation*}
\resizebox{\displaywidth}{!}{$
\left(\widetilde\Lambda_{[p^{-n},\infty]},\widetilde\Lambda_{[p^{-n},\infty]}^\circ\right)^{H_K}=\left(\A_{\mathrm{inf}}^{H_K}\left\langle\frac{[\overline\pi]^{(p-1)p^{n-1}}}{p}\right\rangle\left[\frac{1}{[\overline\pi]}\right],\A_{\mathrm{inf}}^{H_K}\left\langle\frac{[\overline\pi]^{(p-1)p^{n-1}}}{p}\right\rangle\right)
$}
\end{equation*}

The same argument as in the proof of Proposition~\ref{prop: partial sheafiness} shows the following:
\begin{proposition}
There is an exact sequence (of $R_0$-modules)
\begin{equation*}
	\begin{split}
0\rightarrow \Lambda_{R_0,[0,\infty]}^{H_K}\rightarrow \Lambda_{R_0,[0,\infty]}^{H_K}\left\langle\frac{u}{[\overline\pi]^{s(b)}}\right\rangle&\oplus \Lambda_{R_0,[0,\infty]}^{H_K}\left\langle\frac{[\overline\pi]^{s(a)}}{u}\right\rangle	\\
&\rightarrow\Lambda_{R_0,[0,\infty]}^{H_K}\left\langle\frac{u}{[\overline\pi]^{s(b)}},\frac{[\overline\pi]^{s(a)}}{u}\right\rangle\rightarrow 0
\end{split}
\end{equation*}
\end{proposition}

\begin{lemma}
There is a connecting homomorphism 
\[	\widetilde\Lambda_{R_0,[a,b],0}^{H_K}\rightarrow\H^1(H_K,\Lambda_{R_0,[0,\infty]})	\]
\end{lemma}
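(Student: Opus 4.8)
The plan is to produce the map as the connecting homomorphism of the short exact sequence of Proposition~\ref{prop: partial sheafiness}, regarded as a sequence of topological $H_K$-modules. Under the standing assumption $s(a),s(b)\in p^{\Z}$ that sequence may be rewritten as
\[
0\longrightarrow R_0\htimes\A_{\mathrm{inf}}\stackrel{\iota}{\longrightarrow} M\stackrel{q}{\longrightarrow}\widetilde\Lambda_{R_0,[a,b],0}\longrightarrow 0,
\]
where $M:=(R_0\htimes\A_{\mathrm{inf}})\left\langle\frac{u}{[\overline\pi]^{s(b)}}\right\rangle\oplus(R_0\htimes\A_{\mathrm{inf}})\left\langle\frac{[\overline\pi]^{s(a)}}{u}\right\rangle$ and the last term is $\widetilde\Lambda_{R_0,[a,b],0}$ by its very definition. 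Since $H_K$ fixes $u$ and $[\overline\pi]$ and acts continuously on $\A_{\mathrm{inf}}$ for the weak topology, it acts continuously and $R_0$-linearly on all three terms (the outer summands of $M$ and the quotient are rational localizations of $R_0\htimes\A_{\mathrm{inf}}$), the maps $\iota$ and $q$ are continuous and $H_K$-equivariant, and $\iota$ realizes $R_0\htimes\A_{\mathrm{inf}}$ as a closed $H_K$-submodule of $M$.

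Given $x\in\widetilde\Lambda_{R_0,[a,b],0}^{H_K}$, choose a lift $\widetilde x\in M$ with $q(\widetilde x)=x$, which exists because $q$ is surjective. For $\sigma\in H_K$ put $c_x(\sigma):=\sigma(\widetilde x)-\widetilde x$. Since $q(\sigma\widetilde x-\widetilde x)=\sigma x-x=0$, we have $c_x(\sigma)\in\iota(R_0\htimes\A_{\mathrm{inf}})$, so $c_x$ defines a map $H_K\to R_0\htimes\A_{\mathrm{inf}}$; the identity $\sigma\tau\widetilde x-\widetilde x=\sigma(\tau\widetilde x-\widetilde x)+(\sigma\widetilde x-\widetilde x)$ shows it is an additive $1$-cocycle, and it is continuous because the $H_K$-action on $M$ is continuous and $\iota$ is a topological embedding. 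A different lift of $x$ differs from $\widetilde x$ by some $y\in R_0\htimes\A_{\mathrm{inf}}$, changing $c_x$ by the coboundary $\sigma\mapsto\sigma(\iota(y))-\iota(y)$; hence the class $[c_x]\in\H^1(H_K,R_0\htimes\A_{\mathrm{inf}})$ depends only on $x$. Choosing lifts additively and $R_0$-linearly shows $x\mapsto[c_x]$ is $R_0$-linear, and by construction it is the boundary map of the long exact cohomology sequence of the displayed short exact sequence.

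The only substantive point is the topological bookkeeping behind the first paragraph: one needs that $R_0\htimes\A_{\mathrm{inf}}$ and $M$ carry continuous $H_K$-actions for the weak topology and that $\iota$ is a closed embedding, so that $c_x$ is genuinely a continuous cochain valued in $R_0\htimes\A_{\mathrm{inf}}$ with its weak topology. This is exactly where one invokes the description of the weak topology from Section~2, transported to the coefficient rings, together with the presentations of $M$ and $\widetilde\Lambda_{R_0,[a,b],0}$ as rational localizations of $R_0\htimes\A_{\mathrm{inf}}$; with those in hand everything else is formal. I expect this verification, rather than the cocycle formalism, to be the only real work.
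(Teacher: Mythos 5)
Your cocycle formalism is fine, but it is not where the content of this lemma lives, and the one substantive point you identify is exactly the one you do not prove. For the degree-$0$ connecting map, the class $\sigma\mapsto\sigma\widetilde x-\widetilde x$ is a priori a continuous cochain valued in $M$; to regard it as a \emph{continuous} cochain valued in $R_0\htimes\A_{\mathrm{inf}}$ with its own $(u,[\overline\pi])$-adic topology you need, as you say, that $\iota$ is a topological embedding. That claim is not ``formal from the presentations as rational localizations'': rational localization maps are in general not topological embeddings (the topology on the target is typically strictly coarser on the image), and what you actually need here is a quantitative statement of the shape
\[
[\overline\pi]^{Ns(b)}\widetilde\Lambda_{R_0,[0,b],0}\;\cap\;u^{N}\widetilde\Lambda_{R_0,[a,\infty],0}\;\cap\;(R_0\htimes\A_{\mathrm{inf}})\;\subseteq\;(u,[\overline\pi])^{f(N)}(R_0\htimes\A_{\mathrm{inf}}),\qquad f(N)\to\infty,
\]
which is an arithmetic assertion about these specific rings, of the same flavor (and difficulty) as the torsion-freeness arguments in Proposition~\ref{prop: inj overconvergent}. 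Asserting that you ``expect this verification to be the only real work'' concedes that the real work has not been done; as written the proposal is a formal shell around an unproved claim.

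The paper resolves the continuity issue by a different mechanism: it constructs a continuous \emph{set-theoretic section} of the surjection $q:\widetilde\Lambda_{R_0,[0,b],0}\oplus\widetilde\Lambda_{R_0,[a,\infty],0}\twoheadrightarrow\widetilde\Lambda_{R_0,[a,b],0}$ (trivially in the degenerate cases $a=0$ or $b=\infty$, and otherwise by observing that both sides are $u$-adically separated and complete, writing $\widetilde\Lambda_{R_0,[a,b],0}/u$ as an explicit quotient of $(R_0\otimes\A_{\mathrm{inf}})[X,Y]$, lifting an element to a representative $\sum_{i\geq1}\alpha_iX^i+\sum_{j\geq0}\beta_jY^j$, and projecting onto the $Y$-part to get $s_1$, then correcting by $s_2=\id-s_1$). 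That construction is the entire content of the proof, and your proposal contains no substitute for it. If you want to salvage your route, you must actually prove the displayed inclusion (or the analogous statement for each summand separately); alternatively, adopt the paper's strategy and build the continuous section explicitly.
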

\begin{proof}
We need to construct a continuous set-theoretic section 
\[	\widetilde\Lambda_{R_0,[a,b],0}\rightarrow \widetilde\Lambda_{R_0,[0,b],0}\oplus \widetilde\Lambda_{R_0,[a,\infty],0}	\]
If $[a,b]=[0,b]$, we choose the map $(\id, 0)$.  If $a\neq 0$ but $b=\infty$, we choose the map $(0,\id)$.

Otherwise, $[a,b]\subset (0,\infty)$ and we have an exact sequence 
\begin{equation*}
	\begin{split}
0\rightarrow \Lambda_{R_0,[0,\infty]}\rightarrow \Lambda_{R_0,[0,\infty]}\left\langle\frac{u}{[\varpi]^{1/b}}\right\rangle&\oplus\Lambda_{R_0,[0,\infty]}\left\langle\frac{[\varpi]^{1/a}}{u}\right\rangle	\\
&\rightarrow \Lambda_{R_0,[0,\infty]}\left\langle\frac{u}{[\varpi]^{1/b}},\frac{[\varpi]^{1/a}}{u}\right\rangle\rightarrow 0
\end{split}
\end{equation*}
and we need to produce a continuous set-theoretic section 
\[	s:\widetilde\Lambda_{R,[a,b],0}\rightarrow \widetilde\Lambda_{R,[0,b],0}\oplus\widetilde\Lambda_{R,[a,\infty],0}	\]
We first construct a continuous set-theoretic map 
\[	s_1:\widetilde\Lambda_{R_0,[a,b],0}\rightarrow \widetilde\Lambda_{R_0,[a,\infty],0}	\]
Both rings are $u$-adically separated and complete, so it suffices to construct a set-theoretic map $\overline{s}_1:\widetilde\Lambda_{R_0,[a,b],0}/u\rightarrow \widetilde\Lambda_{R_0,[a,\infty],0}/u$. We may write 
\[	\widetilde\Lambda_{R_0,[a,b],0}/u = (R_0\otimes\A_{\mathrm{inf}})[X,Y]/(u,[\varpi]^{1/a},[\varpi]^{1/b}X,XY-[\varpi]^{1/a-1/b})	\]
and given $\overline x\in\widetilde\Lambda_{R_0,[a,b],0}/u$, we may choose a lift to $(R_0\otimes\A_{\mathrm{inf}})\left[X,Y\right]/(u)$ of the form $\sum_{i\geq 1}\alpha_iX^i+\sum_{j\geq 0}\beta_jY^j$, then project $\sum_{j\geq 0}\beta_jY^j$ to $\widetilde\Lambda_{R_0,[a,\infty],0}/u$. 

The resulting map $s_1$ is not necessarily a section, but it has the property that for $x\in \widetilde\Lambda_{R_0,[a,b],0}$, $x-\im(s_1(x))\in\im(\widetilde\Lambda_{R_0,[0,b],0})$.  Since the restriction map $\widetilde\Lambda_{R,[0,b]}\rightarrow\widetilde\Lambda_{R,[a,b]}$ is injective, the map $x\mapsto x-\im(s_1(x))$ defines another map $s_2:\widetilde\Lambda_{R,[a,b]}\rightarrow \widetilde\Lambda_{R,[0,b]}$.  Then 
\[	s:=-s_1\oplus s_2:\widetilde\Lambda_{R_0,[a,b],0}\rightarrow \widetilde\Lambda_{R_0,[a,\infty],0}\oplus \widetilde\Lambda_{R,[0,b]}	\]
is the desired section.
\end{proof}

\begin{lemma}
There is an exact sequence
\[	0\rightarrow \Lambda_{R_0,[0,\infty]}^{H_K}\left[\frac{1}{u}\right]\rightarrow \widetilde\Lambda_{R_0,[0,b],0}^{H_K}\left[\frac{1}{u}\right]\oplus \widetilde\Lambda_{R,[a,\infty]}^{H_K}\rightarrow \widetilde\Lambda_{R,[a,b]}^{H_K}\rightarrow 0	\]
\end{lemma}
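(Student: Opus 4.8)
The plan is to read the sequence off the long exact sequence of continuous $H_K$-cohomology attached to the short exact sequence of Proposition~\ref{prop: partial sheafiness}, and then to invert $u$. Writing $\widetilde\Lambda_{R_0,[c,d],0}=(R_0\htimes\A_{\mathrm{inf}})\langle\frac{u}{[\overline\pi]^{s(d)}},\frac{[\overline\pi]^{s(c)}}{u}\rangle$ as at the start of this subsection and using the endpoint conventions, that proposition reads
\[	0\rightarrow R_0\htimes\A_{\mathrm{inf}}\rightarrow \widetilde\Lambda_{R_0,[0,b],0}\oplus\widetilde\Lambda_{R_0,[a,\infty],0}\rightarrow\widetilde\Lambda_{R_0,[a,b],0}\rightarrow 0,	\]
an exact sequence of $R_0$-modules with continuous $H_K$-action, the maps being equivariant since $H_K$ fixes $u$ and $[\overline\pi]$. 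The previous lemma furnishes a continuous set-theoretic section of the surjection, which makes the associated sequence of continuous cochain complexes short exact; taking cohomology gives a long exact sequence beginning $(R_0\htimes\A_{\mathrm{inf}})^{H_K}\to\widetilde\Lambda_{R_0,[0,b],0}^{H_K}\oplus\widetilde\Lambda_{R_0,[a,\infty],0}^{H_K}\to\widetilde\Lambda_{R_0,[a,b],0}^{H_K}\xrightarrow{\delta}\H^1(H_K,R_0\htimes\A_{\mathrm{inf}})$.

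Next I apply the exact functor $(-)[1/u]$: since $R_0\htimes\A_{\mathrm{inf}}$ has no $u$-torsion (as used in the proof of Proposition~\ref{prop: inj overconvergent}), neither do its overconvergent subrings, and $(-)[1/u]$ commutes with $(-)^{H_K}$ on $u$-torsion-free modules; moreover $\widetilde\Lambda_{R_0,[0,b],0}[1/u]=\widetilde\Lambda_{R,[0,b]}$, $\widetilde\Lambda_{R_0,[a,\infty],0}[1/u]=\widetilde\Lambda_{R,[a,\infty]}$ and $\widetilde\Lambda_{R_0,[a,b],0}[1/u]=\widetilde\Lambda_{R,[a,b]}$. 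So the localized long exact sequence begins with exactly the four terms of the statement, followed by $\widetilde\Lambda_{R,[a,b]}^{H_K}\xrightarrow{\delta}\H^1(H_K,R_0\htimes\A_{\mathrm{inf}})[1/u]$; exactness at the first three spots is formal, using also $\widetilde\Lambda_{R,[0,b]}\cap\widetilde\Lambda_{R,[a,\infty]}=(R_0\htimes\A_{\mathrm{inf}})[1/u]$ inside $\widetilde\Lambda_{R,[a,b]}$ (the Corollary to Proposition~\ref{prop: partial sheafiness}). Thus it remains to show $\delta=0$ after inverting $u$, equivalently that $\widetilde\Lambda_{R,[0,b]}^{H_K}\oplus\widetilde\Lambda_{R,[a,\infty]}^{H_K}\to\widetilde\Lambda_{R,[a,b]}^{H_K}$ is surjective.

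I would obtain this by comparison with the $R_0\htimes\A_{\mathrm{inf}}^{H_K}$-analogue of Proposition~\ref{prop: partial sheafiness} proved above, which is \emph{already} surjective on the right. The evident morphism from that sequence into the present sequence of $H_K$-invariants (the elements of $\A_{\mathrm{inf}}^{H_K}$, and the localizing elements $u$, $[\overline\pi]$, all being $H_K$-fixed) reduces the desired surjectivity, by a diagram chase, to surjectivity of $(R_0\htimes\A_{\mathrm{inf}}^{H_K})\langle\frac{u}{[\overline\pi]^{s(b)}},\frac{[\overline\pi]^{s(a)}}{u}\rangle[1/u]\to\widetilde\Lambda_{R,[a,b]}^{H_K}$ --- that is, to the statement that forming $H_K$-invariants commutes with $\htimes_{\Z_p}R_0$ and with the rational localization defining $\widetilde\Lambda_{R,[a,b]}$. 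Excluding the degenerate cases $a=0$ and $b=\infty$ (where the sequence visibly splits), $[\overline\pi]$ is a unit both in $\widetilde\Lambda_{R,[a,b]}$ and in the source, so one may invert $[\overline\pi]$ as well; since rational localization is exact, the claim reduces to the identity $(R_0\htimes\A_{\mathrm{inf}}[1/u,1/[\overline\pi]])^{H_K}=R_0\htimes\A_{\mathrm{inf}}^{H_K}[1/u,1/[\overline\pi]]$.

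The hard part is this last identity --- pushing $H_K$-cohomology through the completed tensor product $R_0\htimes_{\Z_p}(-)$, which is not exact in general. The classical inputs are the tilted Ax--Sen--Tate theorem $(\C_p^\flat)^{H_K}=\widehat{K_\infty}^\flat$ (forcing $\mathscr{O}_{\C_p}^\flat/\widehat{\mathscr{O}}_{K_\infty}^\flat$, hence $\A_{\mathrm{inf}}/\A_{\mathrm{inf}}^{H_K}$, to be $[\overline\pi]$-torsion-free) and the Tate--Sen estimate that $\H^1(H_K,\mathscr{O}_{\C_p}^\flat)$ is annihilated by a fixed power of $\overline\pi$, whence so is $\H^1(H_K,\A_{\mathrm{inf}})$ after $p$-adic d\'evissage. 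To transport these through $\htimes_{\Z_p}R_0$ I would filter $R_0$ by $I_j=p^jR\cap R_0$: each $R_0/I_j$ is a $u$-torsion-free $(\Z/p^j)[\![u]\!]$-algebra, each $I_j/I_{j+1}$ a finite $u$-torsion-free $R_0/I_1$-module, and modulo $p$ the ring $R_0\htimes_{\Z_p}\A_{\mathrm{inf}}$ becomes $R_0/I_1\htimes_{\F_p}\mathscr{O}_{\C_p}^\flat$ --- a completed tensor over the \emph{field} $\F_p$, where the functor is exact and the torsion bounds on $H_K$-cohomology survive. Combining the $[\overline\pi]$-adic and $p$-adic d\'evissages and then inverting $[\overline\pi]$ and $u$ yields the identity, hence the lemma via the $R_0\htimes\A_{\mathrm{inf}}^{H_K}$-sequence.
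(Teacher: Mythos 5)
Your first half is exactly the paper's route: the short exact sequence of Proposition~\ref{prop: partial sheafiness}, the continuous set-theoretic section from the preceding lemma, the resulting long exact sequence of continuous $H_K$-cohomology, and inverting $u$. Up to and including the reduction ``it remains to show $\delta=0$ after inverting $u$,'' you are on track. The divergence, and the gap, is in how you kill $\delta$.

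The paper does \emph{not} prove surjectivity of $\widetilde\Lambda_{R,[0,b]}^{H_K}\oplus\widetilde\Lambda_{R,[a,\infty]}^{H_K}\rightarrow\widetilde\Lambda_{R,[a,b]}^{H_K}$ by identifying $\widetilde\Lambda_{R,[a,b]}^{H_K}$ with $(R_0\htimes\A_{\mathrm{inf}}^{H_K})\langle\frac{u}{[\overline\pi]^{s(b)}},\frac{[\overline\pi]^{s(a)}}{u}\rangle[\frac 1 u]$. That identification is the \emph{final} corollary of this subsection, its proof occupies the rest of \S\ref{subsection: H_K-invts} (via the careful study of $(R_0\htimes\A_{\mathrm{inf}})/(u-[\overline\pi]^{s(a)})$ and an inductive approximation argument), and it \emph{uses} the present lemma to split $[a,b]$ into $[0,b]$ and $[a,\infty]$. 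So within the paper's architecture your reduction is circular. Read as a free-standing argument, it still has a gap: the step ``since $[\overline\pi]$ is a unit one may invert it, and since rational localization is exact the claim reduces to $(R_0\htimes\A_{\mathrm{inf}}[1/u,1/[\overline\pi]])^{H_K}=R_0\htimes\A_{\mathrm{inf}}^{H_K}[1/u,1/[\overline\pi]]$'' is not valid. The ring $\widetilde\Lambda_{R,[a,b]}$ is a \emph{completed} rational localization, not an algebraic localization of $(R_0\htimes\A_{\mathrm{inf}})[1/u,1/[\overline\pi]]$, and taking $H_K$-invariants does not commute with that completion: an $H_K$-fixed element is a limit of non-invariant finite sums, and producing an invariant approximating sequence is precisely the hard content of the later corollary. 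Your final paragraph of d\'evissages addresses a different (uncompleted) statement and does not close this.

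The paper's actual argument is short and avoids all of this: for $x\in\widetilde\Lambda_{R_0,[a,b],0}^{H_K}$ one has the identity $u\cdot\delta(x)=[\overline\pi]^{s(b)}\cdot\delta\bigl(x\cdot\frac{u}{[\overline\pi]^{s(b)}}\bigr)$ in $\H^1(H_K,R_0\htimes\A_{\mathrm{inf}})$, and a separate lemma (proved by a Tate--Sen almost-vanishing argument modulo successive powers of $u$) shows that multiplication by $[\alpha]$ for any $\alpha$ in the maximal ideal of $\O_{\C_p}^\flat$ annihilates $\H^1(H_K,R_0\htimes\A_{\mathrm{inf}})$. Hence $u\cdot\delta(x)=0$, so $\delta$ vanishes after inverting $u$. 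If you want to salvage your write-up, replace your last two paragraphs with this factorization trick; the surjectivity you were aiming for then comes out as a consequence, not an input.
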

\begin{proof}
This follows as in Berger's Lemme 2.27.  If $a=0$ or $b=\infty$, the result is trivial.  If not, we have an exact sequence
\begin{equation*}
	\begin{split}
0\rightarrow \Lambda_{R_0,[0,\infty]}^{H_K}\left[\frac 1 u\right]
\rightarrow &\widetilde\Lambda_{R_0,[0,b],0}^{H_K}\left[\frac 1 u\right]\oplus \widetilde\Lambda_{R,[a,\infty]}^{H_K}	\\
&\rightarrow  \widetilde\Lambda_{R,[a,b]}^{H_K}\rightarrow \H^1(H_K,\Lambda_{R_0,[0,\infty]}\left[\frac 1 u\right])
\end{split}
\end{equation*}
and we need to show that the map $\delta:\widetilde\Lambda_{R,[a,b]}^{H_K}
\rightarrow H^1(H_K,\Lambda_{R_0,[0,\infty]}\left[\frac 1 u\right]$ is the zero map.

If $x\in \widetilde\Lambda_{R_0,[a,b],0}^{H_K}$, then $x\cdot\left(\frac{u}{[\overline\pi]^{s(b)}}\right)\in\widetilde\Lambda_{R_0,[a,b],0}^{H_K}$ as well, and after multiplying by a suitable power of $u$, we may assume that $\delta(x)$ and $\delta(x\cdot\left(\frac{u}{[\overline\pi]^{s(b)}}\right))$ are both elements of $\H^1(H_K,\Lambda_{R_0,[0,\infty]})$.  But then
\[	u\cdot\delta(x)=\delta(ux)=\delta(x\cdot\left(\frac{u}{[\overline\pi]^{s(b)}}\right)\cdot[\overline\pi]^{s(b)})=[\overline\pi]^{s(b)}\delta(x\cdot\left(\frac{u}{[\overline\pi]^{s(b)}}\right))	\]
Then the result follows from the next lemma.
\end{proof}

\begin{lemma}
If $\alpha\in\mathscr{O}_{\C_p}^\flat$ is in the maximal ideal, then the homomorphism $\H^1(H_K,\Lambda_{R_0,[0,\infty]})\xrightarrow{\times[\alpha]}\H^1(H_K,\Lambda_{R_0,[0,\infty]})$ induced by multiplication by $[\alpha]$ is the zero map.
\end{lemma}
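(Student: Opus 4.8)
The core input is the classical case $R=\Z_p$: the almost-vanishing of $\H^i(H_K,\mathscr{O}_{\C_p}^\flat)$ for $i\ge 1$ --- the analogue, via tilting, of Tate's computation of $\H^\bullet(H_K,\C_p)$, equivalently the almost-acyclicity of the $H_K$-cohomology of the extension of perfectoid fields $\widehat{\mathscr{O}}_{K_\infty}^\flat\to\mathscr{O}_{\C_p}^\flat$. Concretely, each $\H^i(H_K,\mathscr{O}_{\C_p}^\flat)$ with $i\ge 1$ is annihilated by the maximal ideal $\mathfrak m:=\mathfrak m_{\mathscr{O}_{\C_p}^\flat}$. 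The feature that makes a \emph{single} Teichm\"uller element $[\alpha]$ suffice is that $\mathfrak m$ is idempotent: as $\mathscr{O}_{\C_p}^\flat$ is perfect with divisible value group, every $\alpha\in\mathfrak m$ factors as $\alpha=\alpha_1\cdots\alpha_n$ with each $\alpha_i\in\mathfrak m$ (e.g.\ $\alpha_i=\alpha^{1/n}$). Hence the class of modules ``annihilated by $[\beta]$ for every $\beta\in\mathfrak m$'' is closed under extensions and under subquotients, so it propagates through any finite d\'evissage, and --- as noted below --- through the inverse limits that arise.

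The plan is a d\'evissage along the three filtrations at hand: the $p$-adic one, organized by the ideals $I_j=p^jR\cap R_0$ set up above; the $u$-adic one on $R_0$; and the $[\varpi]$-adic one on $\A_{\mathrm{inf}}$. Truncating by all three, one reduces to $H_K$-modules that are successive extensions of modules of the shape $N\htimes_{\F_p}\mathscr{O}_{\C_p}^\flat$, where $N$ is a discrete $\F_p$-module (a finite module over some quotient $R_0/(I_1,u^k)$) carrying the \emph{trivial} $H_K$-action and $\mathscr{O}_{\C_p}^\flat$ carries its usual action. For such a module the continuous-cochain complex decomposes as $N\htimes_{\F_p}C^\bullet_{\mathrm{cont}}(H_K,\mathscr{O}_{\C_p}^\flat)$ --- writing $N$ as a direct sum of copies of $\F_p$ and using that a continuous cochain valued in a direct sum lies, by compactness of $H_K$, in a finite partial sum --- whence $\H^1(H_K,N\htimes_{\F_p}\mathscr{O}_{\C_p}^\flat)=N\htimes_{\F_p}\H^1(H_K,\mathscr{O}_{\C_p}^\flat)$ (again up to an inverse limit when $N$ is infinite-dimensional), on which $\mathfrak m$ acts through the second factor and hence trivially. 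By the idempotence remark, this annihilated-by-every-$[\beta]$ property propagates up each finite d\'evissage.

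To conclude, I would reassemble $R_0\htimes_{\Z_p}\A_{\mathrm{inf}}$ from its truncations using the Milnor exact sequence $0\to\varprojlim^1\,\H^0(H_K,-)\to\H^1(H_K,R_0\htimes\A_{\mathrm{inf}})\to\varprojlim\H^1(H_K,-)\to 0$ relating its $\H^1$ to the $\H^0$ and $\H^1$ of the truncations. The $\varprojlim\H^1$-term is annihilated by $[\alpha]$ since each constituent is and multiplication by $[\alpha]$ commutes with $\varprojlim$. For the $\varprojlim^1$-term, the cokernels of the transition maps on $H_K$-invariants inject into cohomology groups $\H^1(H_K,\ker(\cdots))$ which are again finitely filtered with $\mathscr{O}_{\C_p}^\flat$-type graded pieces, hence annihilated by $[\beta]$ for every $\beta\in\mathfrak m$; so, writing $\alpha=\alpha_1\alpha_2$, the factor $[\alpha_1]$ annihilates the whole tower and therefore its $\varprojlim^1$. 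Thus $[\alpha]=[\alpha_1][\alpha_2]$ annihilates both outer terms, hence all of $\H^1(H_K,R_0\htimes\A_{\mathrm{inf}})$.

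The hard part will be the homological bookkeeping: one must verify that the completed tensor product $N\htimes_{\F_p}(-)$ genuinely computes $H_K$-cohomology term by term --- which ultimately rests on the continuous-cochain complex of $\mathscr{O}_{\C_p}^\flat$ being topologically strict, or on an approximation argument replacing strictness --- and one must control the $\varprojlim^1$-terms. It is exactly here that idempotence of $\mathfrak m$ does the real work, converting ``the transition maps on invariants are almost surjective'' into ``a single fixed $[\alpha_1]$ annihilates $\varprojlim^1$''. The rest is routine d\'evissage along the filtrations already established above.
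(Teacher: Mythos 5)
Your proposal is correct and rests on exactly the same two inputs as the paper's proof: the almost-vanishing of $\H^1(H_K,\mathscr{O}_{\C_p}^\flat)$ coming from the Tate--Sen trace elements (axiom {\bf TS1}, via Colmez's construction), and a d\'evissage along the $u$-adic filtration on $R_0$ (refined by the ideals $I_j$ and the $[\varpi]$-adic filtration) down to graded pieces of the form $N\htimes\mathscr{O}_{\C_p}^\flat$ with $N$ a trivial discrete module. The only genuine difference is how the limit over powers of $u$ is reassembled. The paper phrases this as a successive-approximation argument directly on cocycles: one trivializes $[\alpha]c_\tau$ modulo $u$, absorbs the coboundary, and iterates, so that the correcting coboundaries form a $u$-adically convergent series in the complete module $R_0\htimes\A_{\mathrm{inf}}$ and no $\varprojlim^1$ ever appears. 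You instead pass through the Milnor sequence and must separately prove an almost-Mittag--Leffler statement for $\varprojlim^1\H^0$. That statement is true, but your one-line justification (``the factor $[\alpha_1]$ annihilates the whole tower and therefore its $\varprojlim^1$'') hides the actual induction: solving $b_n-f_n(b_{n+1})=[\alpha_1]a_n$ requires at each stage that $b_n$ itself lie in $[\beta_n]A_n$ for a fresh factor $\beta_n$ of $\alpha_1$, so one must split $\alpha_1$ into an infinite sequence of factors with $\sum_n v(\beta_n)\le v(\alpha_1)$ (possible since the value group of $\C_p^\flat$ is divisible) and thread them through the recursion. You flag this as the place where idempotence of $\mathfrak m$ does the real work, which is the right diagnosis; once that induction is written out, your argument closes. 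In short: same proof in substance, with your packaging trading the paper's convergent-series bookkeeping for an explicit almost-Mittag--Leffler lemma.
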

\begin{proof}
If $c_\tau\in \H^1(H_K,\Lambda_{R_0,[0,\infty]})$ is in the image of this map, we may consider its image in $\H^1(H_K,(R_0/u)\htimes\A_{\mathrm{inf}})$.  Then a standard Tate--Sen argument shows that this image is the trivial cocycle.  Working modulo successive powers of $u$, it follows that $c_\tau$ is itself trivial.
\end{proof}

Thus, to study the $H_K$-invariants of $\widetilde\Lambda_{R,[a,b]}\left[\frac{1}{u}\right]$, it suffices to study the $H_K$-invariants of $\widetilde\Lambda_{R_0,[0,b],0}\left[\frac{1}{u}\right]$ and $\widetilde\Lambda_{R_0,[a,\infty]}\left[\frac{1}{u}\right]$.

We first study the ring $\widetilde\Lambda_{R_0,[0,\infty]}^{H_K}=(R_0\htimes\A_{\mathrm{inf}})^{H_K}$.

\begin{lemma}\label{lemma: ainf-invariants}
\begin{enumerate}
\item	If $N$ is a finite module over a noetherian discrete $\Z/p^n$-algebra $R_0$, then 
\[	\left(N\htimes\A_{\mathrm{inf}}\right)^{H_K}=N\htimes\A_{\mathrm{inf}}^{H_K}	\]
\item	If $R_0$ is a Huber ring with principal ideal of definition, then 
\[	\left(R_0\htimes\A_{\mathrm{inf}}\right)^{H_K}=R_0\htimes\A_{\mathrm{inf}}^{H_K}	\]
\item	If $N$ is a finite module over a noetherian discrete $\Z/p^n$-algebra and $[\mathfrak{m}_{\C_p^\flat}]\subset\A_{\mathrm{inf}}$ denotes the ideal generated by $\{[\alpha]\}_{\alpha\in\mathfrak{m}_{\C_p^\flat}}$, then 
\[	\left(N\otimes [\mathfrak{m}_{\C_p^\flat}]/[\overline\pi]^{s(a)}[\mathfrak{m}_{\C_p^\flat}]\right)^{H_K}=N\otimes [\mathfrak{m}_{\widehat{K}_\infty^\flat}]/[\overline\pi]^{s(a)}[\mathfrak{m}_{\widehat{K}_\infty^\flat}]	\]
\end{enumerate}
\end{lemma}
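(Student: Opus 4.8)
All three statements assert that forming $H_K$-invariants commutes with a completed base change along $\A_{\mathrm{inf}}^{H_K}\hookrightarrow\A_{\mathrm{inf}}$, and the engine in each case is the preceding lemma together with dévissage. Recall that $H_K$ acts coordinate-wise on $\A_{\mathrm{inf}}=W(\mathscr O_{\C_p^\flat})$ and trivially on $N$, so $\A_{\mathrm{inf}}^{H_K}=W(\widehat{\mathscr O}_{K_\infty}^\flat)$, $W_m(\mathscr O_{\C_p^\flat})^{H_K}=W_m(\widehat{\mathscr O}_{K_\infty}^\flat)$, $\mathscr O_{\C_p^\flat}^{H_K}=\widehat{\mathscr O}_{K_\infty}^\flat$ and $[\mathfrak{m}_{\C_p^\flat}]^{H_K}=[\mathfrak{m}_{\widehat K_\infty^\flat}]$, all ring-theoretic identities being immediate, and that $\A_{\mathrm{inf}}$, its Witt truncations, and $\A_{\mathrm{inf}}^{H_K}$ are $\Z_p$-flat, so completed tensor products against them over $\Z_p$ are exact on finite $R_0$-modules. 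The key input, from the lemma above, is that $\times[\alpha]$ annihilates $\H^1(H_K,R_0\htimes\A_{\mathrm{inf}})$ for every $\alpha\in\mathfrak{m}_{\C_p^\flat}$; combined with $\operatorname{cd}_p H_K\le 1$, the same holds for $\H^1(H_K,M)$ for every subquotient $M$ of a finite free $R_0\htimes\A_{\mathrm{inf}}$-module, so each such $\H^1$ is killed by $[\mathfrak{m}_{\C_p^\flat}]$.

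\textbf{(1).} Since $N$ is $p^n$-torsion, $N\htimes\A_{\mathrm{inf}}=N\htimes W_n(\mathscr O_{\C_p^\flat})=\varprojlim_j N\htimes\bigl(W_n(\mathscr O_{\C_p^\flat})/[\overline\pi]^j\bigr)$, and $\H^0(H_K,-)$ commutes with this inverse limit, so it suffices to understand $\H^0$ and $\H^1$ of the discrete $H_K$-modules $N\htimes\bigl(W_m(\mathscr O_{\C_p^\flat})/[\overline\pi]^j\bigr)$ at finite level and to track the transition maps in $j$. Using the length filtration on $W_m$ (whose graded pieces are $H_K$-equivariantly $\cong\mathscr O_{\C_p^\flat}$) and a finite presentation of $N$ over $R_0$, one shows that at each level $j$ the natural map from the expected answer $N\htimes\bigl(W_m(\widehat{\mathscr O}_{K_\infty}^\flat)/[\overline\pi]^j\bigr)$ to $\H^0\bigl(H_K,N\htimes(W_m(\mathscr O_{\C_p^\flat})/[\overline\pi]^j)\bigr)$ has kernel and cokernel that are subquotients of the $[\mathfrak{m}_{\C_p^\flat}]$-torsion groups $\H^1(H_K,-)$ attached to the presentation. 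Crucially, the transition maps $(-)/[\overline\pi]^{j+1}\to(-)/[\overline\pi]^j$ multiply these $\H^1$-contributions — which arise from the \emph{sub}objects $[\overline\pi]^j\mathscr O_{\C_p^\flat}\subset\mathscr O_{\C_p^\flat}$ — by $[\overline\pi]$, hence annihilate them, while the ``main'' terms form a Mittag--Leffler system with surjective transition maps; passing to $\varprojlim_j$ then gives $(N\htimes\A_{\mathrm{inf}})^{H_K}=N\htimes\A_{\mathrm{inf}}^{H_K}$. The main obstacle is precisely this last point: arranging the bookkeeping so that the $\H^1$-error terms one actually meets come from subobjects killed by $[\overline\pi]$ in the limit rather than from quotient objects (for which the analogous transition maps are isomorphisms and the errors would persist), and checking that the completed tensor products behave as expected.

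\textbf{(2).} Because $R_0$ has a \emph{principal} ideal of definition $(\varpi_0)$ and $p$ is topologically nilpotent in $R_0$ (it is an $\mathscr O_E$-algebra), every $R_0/\varpi_0^{\,j}$ is a noetherian \emph{discrete} $\Z/p^{k_j}$-algebra, finite as a module over itself; hence part (1) applies and gives $\bigl((R_0/\varpi_0^{\,j})\htimes\A_{\mathrm{inf}}\bigr)^{H_K}=(R_0/\varpi_0^{\,j})\htimes\A_{\mathrm{inf}}^{H_K}$. Since $R_0\htimes\A_{\mathrm{inf}}=\varprojlim_j(R_0/\varpi_0^{\,j})\htimes\A_{\mathrm{inf}}$ and $\H^0(H_K,-)$ commutes with inverse limits, taking $\varprojlim_j$ yields the claim.

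\textbf{(3).} Dévissage along the $p$-adic filtration of $[\mathfrak{m}_{\C_p^\flat}]$ (graded pieces $H_K$-equivariantly $\cong\mathfrak{m}_{\C_p^\flat}$) reduces to the case $n=1$, where $N$ is a module — hence a free module — over $\F_p$, so it suffices to identify $\bigl(\mathfrak{m}_{\C_p^\flat}/[\overline\pi]^{s(a)}\mathfrak{m}_{\C_p^\flat}\bigr)^{H_K}$ and then tensor harmlessly with $N$ over $\F_p$. Write $\mathfrak{m}_{\C_p^\flat}=\varinjlim_{t\to 0^+}[\overline\pi]^{\,t}\mathscr O_{\C_p^\flat}$, so that $\mathfrak{m}_{\C_p^\flat}/[\overline\pi]^{s(a)}\mathfrak{m}_{\C_p^\flat}$ is the filtered colimit of the discrete $H_K$-modules $[\overline\pi]^{\,t}\mathscr O_{\C_p^\flat}/[\overline\pi]^{\,s(a)+t}\mathscr O_{\C_p^\flat}\cong\mathscr O_{\C_p^\flat}/[\overline\pi]^{s(a)}\mathscr O_{\C_p^\flat}$, with transition maps given by multiplication by positive powers of $[\overline\pi]$. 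Since $\H^0(H_K,-)$ commutes with filtered colimits of discrete $H_K$-modules, and since the long exact sequence of $0\to[\overline\pi]^{s(a)}\mathscr O_{\C_p^\flat}\to\mathscr O_{\C_p^\flat}\to\mathscr O_{\C_p^\flat}/[\overline\pi]^{s(a)}\mathscr O_{\C_p^\flat}\to 0$ exhibits $\bigl(\mathscr O_{\C_p^\flat}/[\overline\pi]^{s(a)}\mathscr O_{\C_p^\flat}\bigr)^{H_K}$ as an extension of a $[\mathfrak{m}_{\C_p^\flat}]$-torsion subquotient of $\H^1(H_K,\mathscr O_{\C_p^\flat})$ by $\widehat{\mathscr O}_{K_\infty}^\flat/[\overline\pi]^{s(a)}\widehat{\mathscr O}_{K_\infty}^\flat$, the same mechanism as in (1) — the transition maps multiply the $\H^1$-part by a positive power of $[\overline\pi]$, hence kill it — shows that the colimit is exactly $\mathfrak{m}_{\widehat K_\infty^\flat}/[\overline\pi]^{s(a)}\mathfrak{m}_{\widehat K_\infty^\flat}$, whence the result.
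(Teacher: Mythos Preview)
Your strategy and the paper's diverge sharply on part (1), and the paper's route is far more elementary. The paper simply inducts on $n$: when $n=1$, the module $N$ is an $\F_p$-vector space, so one chooses an $\F_p$-basis $\{e_j\}$ and observes that $N\htimes\A_{\mathrm{inf}}=\{\sum_j b_je_j : b_j\in\mathscr O_{\C_p}^\flat,\ b_j\to 0\}$, whence the $H_K$-invariants are computed coordinate-wise and equal $N\htimes\A_{\mathrm{inf}}^{H_K}$ on the nose. The inductive step is a five-line diagram chase on $0\to p^{n-1}N\to N\to N/p^{n-1}\to 0$. No $\H^1$ enters, no inverse-limit bookkeeping, no Mittag--Leffler. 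Part (2) then follows exactly as you wrote, and for part (3) the paper just cites the identity $\bigl(\mathfrak m_{\C_p^\flat}/\overline\pi^{s(a)}\mathfrak m_{\C_p^\flat}\bigr)^{H_K}=\mathfrak m_{\widehat K_\infty^\flat}/\overline\pi^{s(a)}\mathfrak m_{\widehat K_\infty^\flat}$ and repeats the part-(1) argument; your colimit proof of that identity is more explicit than what the paper gives and looks fine.

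Your approach to (1) can probably be made to work, but two points are shaky as written. First, the claim that ``combined with $\operatorname{cd}_p H_K\le 1$, the same holds for $\H^1(H_K,M)$ for every subquotient'' is not justified: the long exact sequence argument shows $[\alpha]$ kills $\H^1$ of \emph{quotients} of free $R_0\htimes\A_{\mathrm{inf}}$-modules, but for \emph{submodules} $M\subset F$ you only get $[\alpha]\H^1(M)\subset\operatorname{im}\H^0(F/M)$, which is not zero in general. (The statement is nonetheless true, because the Tate--Sen argument behind the preceding lemma applies directly to any $\mathscr O_{\C_p^\flat}$-module with continuous $H_K$-action; but that is a different justification.) Second, you yourself flag the ``main obstacle'' --- arranging that the $\H^1$-errors arise from subobjects so that the transition maps multiply them by $[\overline\pi]$ --- and then do not resolve it; this step is genuinely delicate (one must also check that tensoring the relevant short exact sequences with $N$ over $\Z/p^n$ stays exact, which uses $\Z/p^n$-flatness of $W_n(\mathscr O_{\C_p^\flat})$ and its $[\overline\pi]$-multiples). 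The paper's basis trick avoids all of this: once you notice that an $\F_p$-vector space computation suffices, there is nothing left to bookkeep.
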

\begin{proof}
In order to compute the $H_K$-invariants of $N\htimes\A_{\mathrm{inf}}$, we proceed by induction on $n$.  If $n=1$, then $N$ is a $\F_p$-vector space and we may choose an $\F_p$-basis $\{e_j\}_{j\in J}$.  Then $N\htimes\A_{\mathrm{inf}}=\{\sum_j b_je_j\mid b_j\in \mathscr{O}_{\C_p}^\flat, b_j\rightarrow 0\}$, where the coefficients $b_j$ tend to $0$ with respect to the cofinite filter.  It follows that 
\[	\left(N\htimes\A_{\mathrm{inf}}\right)^{H_K}=\{\sum_j b_je_j\mid b_j\in \widehat{\mathscr{O}}_{K_\infty}^\flat, b_j\rightarrow 0\}=N\htimes\A_{\mathrm{inf}}^{H_K}	\]
Now assume the result holds for $n-1$.  We have an exact sequence
\[	0\rightarrow p^{n-1}(N\htimes\A_{\mathrm{inf}})\rightarrow N\htimes\A_{\mathrm{inf}}\rightarrow (N\htimes\A_{\mathrm{inf}})/(p^{n-1})\rightarrow 0	\]
Since $p^{n-1}N$ is a finite module over the discrete $\F_p$-algebra $R_0/p$, the inductive hypothesis implies that we have an exact sequence
\[	0\rightarrow p^{n-1}N\htimes\A_{\mathrm{inf}}^{H_K}\rightarrow \left(N\htimes\A_{\mathrm{inf}}\right)^{H_K}\rightarrow (N\htimes\A_{\mathrm{inf}}^{H_K})/(p^{n-1})	\]
There is furthermore a natural map $N\htimes\A_{\mathrm{inf}}^{H_K}\rightarrow \left(N\htimes\A_{\mathrm{inf}}\right)^{H_K}$ and a commutative diagram
\begin{equation*}
\resizebox{\displaywidth}{!}{$
\begin{tikzcd}[ampersand replacement=\&]
0\arrow[r] \& \left(p^{n-1}N\htimes\A_{\mathrm{inf}}\right)^{H_K}\arrow[r]\arrow[equal,d] \& \left(N\htimes\A_{\mathrm{inf}}\right)^{H_K} \arrow[r] \& \left((N/p^{n-1})\htimes\A_{\mathrm{inf}}\right)^{H_K} \arrow[equal,d]	\\
0\arrow[r] \& p^{n-1}N\htimes\A_{\mathrm{inf}}^{H_K} \arrow[r] \& N\htimes\A_{\mathrm{inf}}^{H_K} \arrow[r]\arrow[u] \& (N/p^{n-1})\htimes\A_{\mathrm{inf}}^{H_K} \arrow[r] \& 0
\end{tikzcd}
$}
\end{equation*}
A diagram chase shows that the map $N\htimes\A_{\mathrm{inf}}^{H_K}\rightarrow \left(N\htimes\A_{\mathrm{inf}}\right)^{H_K}$ is an isomorphism.

For the second part, let $u\in R_0$ generate the ideal of definition.  Since $p$ is topologically nilpotent in $R_0$, each quotient ring $R_0/u^k$ is $p$-power torsion.  We observe that $R_0\htimes\A_{\mathrm{inf}}=\varprojlim_k\varprojlim_{k'}(R_0/u^k)\otimes(\A_{\mathrm{inf}}/[\overline\pi]^{k'})$, and so
\begin{equation*}
\begin{split}
\left(R_0\htimes\A_{\mathrm{inf}}\right)^{H_K}&=\varprojlim_k\left(\varprojlim_{k'}(R_0/u^k)\otimes(\A_{\mathrm{inf}}/[\overline\pi]^{k'})\right)^{H_K} 	\\
&= \varprojlim_k\left((R_0/u^k)\htimes\A_{\mathrm{inf}}\right)^{H_K} \\
&= \varprojlim_k(R_0/u^k)\htimes\A_{\mathrm{inf}}^{H_K} = R_0\htimes\A_{\mathrm{inf}}^{H_K}
\end{split}
\end{equation*}

The last part follows similarly, using the fact that $\left(\mathfrak{m}_{\C_p^\flat}/\overline\pi^{s(a)}\mathfrak{m}_{\C_p^\flat}\right)^{H_K}=\mathfrak{m}_{\widehat{K}_\infty^\flat}/\overline\pi^{s(a)}\mathfrak{m}_{\widehat{K}_\infty^\flat}$.
\end{proof}

To study the rings $\widetilde\Lambda_{R_0,[a,b]}^{H_K}$ when $[a,b]\neq [0,\infty]$, we require a number of preparatory results.  We will proceed by making a careful study of $(R_0\htimes\A_{\mathrm{inf}})/(u-[\overline\pi]^{s(a)})$ and bootstrapping from characteristic $p$ to characteristic $0$.  However, our techniques specifically exclude the classical case, where $p$ is a pseudo-uniformizer of $R$; we use the ideals $I_j\subset R_0$ defined at the beginning of \textsection~\ref{section: rings with coefficients}, which only differ from $R_0$ itself when $p\notin R^\times$.

We first record two purely algebraic lemmas.
\begin{lemma}\label{lemma: m-regular top free}
Let $R$ be a ring and $M$ an $R$-module, and suppose that $R$ and $M$ are $I$-adically separated and complete, for some ideal $I\subset R$ generated by an $M$-regular sequence $r_1,\ldots,r_n$.  If $M/I$ is free over $R/I$, then $M$ is topologically free over $R$, i.e., there is a subset $\{e_i\}_{i\in I}\subset M$ such that the natural map
\[	\{\sum_{i\in I}a_i\mathbf{e}_i\mid a_i\in R, a_i\rightarrow 0\}\rightarrow M	\]
given by $\mathbf{e}_i\mapsto e_i$ is an isomorphism.  
\end{lemma}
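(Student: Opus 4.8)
The plan is to lift an $R/I$-basis of $M/IM$ to $M$, assemble it into a completed free module $F$, and prove that the resulting comparison map $\phi\colon F\to M$ is an isomorphism by reducing to associated graded modules for the $I$-adic filtration.

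\emph{Step 1 (construction of $\phi$).} Using the hypothesis, choose a basis $\{\overline{e}_j\}_{j\in J}$ of the free $R/I$-module $M/IM$ and lift it to a family $\{e_j\}_{j\in J}$ in $M$. Let $P:=\bigoplus_{j\in J}Re_j$ and let $F:=\widehat{P}$ be its $I$-adic completion. Since $I=(r_1,\dots,r_n)$ is finitely generated, $F$ is $I$-adically complete and separated, one has $F=\varprojlim_m P/I^mP$, and $F/IF=\bigoplus_{j}(R/I)\overline{e}_j$; moreover, because $R$ itself is $I$-adically complete, $F$ is canonically identified with the module $\{\sum_{j}a_j\mathbf{e}_j:a_j\in R,\ a_j\to 0\}$ appearing in the statement. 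Let $\phi\colon F\to M$ be the $R$-linear map with $\phi(\mathbf{e}_j)=e_j$. It is automatically continuous, $\phi(I^mF)\subset I^mM$ for all $m$, and $\phi\bmod I$ is the chosen isomorphism $F/IF\xrightarrow{\sim}M/IM$. (If $M=0$ there is nothing to prove; if $IM=M$ then $M=\bigcap_m I^mM=0$ by completeness of $M$, so we may assume $IM\neq M$, so that $r_1,\dots,r_n$ is a genuine $M$-regular sequence.)

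\emph{Step 2 (reduction to the graded level).} It suffices to show that $\gr_I\phi\colon\gr_IF\to\gr_IM$ is an isomorphism in each degree: for any map between $I$-adically separated and complete modules, an isomorphism on $\gr_I(-)$ forces an isomorphism. Injectivity: a nonzero $f\in\ker\phi$ lies in $I^mF\setminus I^{m+1}F$ for some $m$ by separatedness of $F$, and its nonzero class in $\gr^m_IF$ is killed by $\gr^m_I\phi$, a contradiction. Surjectivity: given $m_0\in M$, one inductively builds $f_k\in I^kF$ with $m_0-\phi(f_0+\dots+f_k)\in I^{k+1}M$ using surjectivity of $\gr^{k+1}_I\phi$; then $\sum_k f_k$ converges in $F$ by completeness to an element $f$ with $m_0-\phi(f)\in\bigcap_m I^mM=0$.

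\emph{Step 3 (identifying the graded pieces).} Because $r_1,\dots,r_n$ is an $M$-regular sequence, it is $M$-quasi-regular (this is classical; see Matsumura, \emph{Commutative Ring Theory}, Theorem~16.2), so the canonical map $(M/IM)[T_1,\dots,T_n]\to\gr_IM$ sending $T_i$ to the class of $r_i$ is an isomorphism; in degree $k$ this says that $\bigoplus_{|\alpha|=k}(M/IM)\,T^\alpha\to I^kM/I^{k+1}M$, $T^\alpha m\mapsto r^\alpha m$, is an isomorphism (with $\alpha\in\Z_{\geq 0}^n$ and $r^\alpha:=r_1^{\alpha_1}\cdots r_n^{\alpha_n}$). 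On the $F$-side, $I^k=\sum_{|\alpha|=k}r^\alpha R$ gives $I^kF=\sum_{|\alpha|=k}r^\alpha F$ and hence a surjection $\bigoplus_{|\alpha|=k}(F/IF)\,T^\alpha\twoheadrightarrow\gr^k_IF$, $T^\alpha f\mapsto r^\alpha f$. Composing this surjection with $\gr^k_I\phi$ and with the quasi-regularity isomorphism above, and using $\phi\bmod I\colon F/IF\xrightarrow{\sim}M/IM$, the composite $\bigoplus_{|\alpha|=k}(F/IF)\,T^\alpha\to\bigoplus_{|\alpha|=k}(M/IM)\,T^\alpha$ is $\bigoplus_\alpha(\phi\bmod I)$ coordinatewise, hence an isomorphism. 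Therefore the initial surjection is injective, so bijective, and consequently $\gr^k_I\phi$ is an isomorphism for every $k$. By Step 2, $\phi$ is an isomorphism, which is exactly the claim.

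\emph{On the main obstacle.} The only external input is the implication ``$M$-regular $\Rightarrow$ $M$-quasi-regular''; note that we neither assume nor need that $r_1,\dots,r_n$ be a regular sequence on $R$ itself — the expected structure of $\gr_IF$ emerges a posteriori from the diagram chase in Step 3 rather than being required as a hypothesis. The only points demanding genuine care are the assertions in Step 1 that the $I$-adic completion of the free module $P$ is complete and separated and coincides with the null-sequence module, both of which rely on $I$ being finitely generated.
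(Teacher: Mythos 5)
Your proof is correct, but it takes a genuinely different route from the paper's. The paper inducts on the number $n$ of generators of $I$: assuming the result for $(r_2,\ldots,r_n)$, it reduces modulo $r_1$, lifts a topological basis of $M/r_1M$, and then upgrades the comparison map from an isomorphism modulo $r_1$ to an isomorphism outright, using a completion--Nakayama statement from the Stacks project for surjectivity modulo all powers of $r_1$ and a $\varprojlim^1$-vanishing argument (with $M$-regularity of $r_1$ entering to identify the kernels $K_m$ with $K/r_1^mK$) for injectivity. You instead run a single, non-inductive d\'evissage along the full $I$-adic filtration: you reduce to the associated graded and invoke the classical fact that an $M$-regular sequence is $M$-quasi-regular, so that $\gr_IM$ is the polynomial module $(M/IM)[T_1,\ldots,T_n]$, and then a short diagram chase forces $\gr_I\phi$ to be an isomorphism degree by degree. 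Your route outsources the induction to Matsumura's Theorem 16.2, but in exchange it isolates cleanly where regularity is used, never requires $r_1,\ldots,r_n$ to be regular on $R$ itself (so $\gr_IF$ need not be a polynomial module a priori, as you point out), and sidesteps the need to check that $M/r_1M$ is $(r_2,\ldots,r_n)$-adically separated and complete over $R/r_1$, which the paper's inductive step implicitly requires. Your treatment of the degenerate case $IM=M$ and of the standard facts about completion along a finitely generated ideal is also correct.
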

\begin{proof}
We proceed by induction on the number of generators of $I$.  If $n=0$, there is nothing to prove.

Assume the result for $I$ generated by $n-1$ elements.  Then $M/r_1$ is topologically free over $R/r_1$, and we may lift a topological basis of $M/r_1$ to a subset $\{e_i\}_{i\in I}\subset M$.  Then we have a homomorphism of $R$-modules
\[      \{\sum_{i\in I}a_i\mathbf{e}_i\mid a_i\in R, a_i\rightarrow 0\}\rightarrow M    \]
which is an isomorphism modulo $r_1$.  By~\cite[Tag 07RC(12)]{stacks-project}, it is surjective modulo all powers of $r_1$.  Moreover, if $K$ denotes the kernel, the assumption that $r_1$ is $M$-regular implies that the kernel $K_m$ of the reduction modulo $r_1^m$ is simply $K/r_1^m$.  Thus, $R\lim^1 K_m=0$ and our map is an isomorphism.
\end{proof}

\begin{lemma}\label{lemma: fancy crt}
For any ring $R$ and any ideals $I_1,I_2\subset R$, there is an exact sequence
\[	0\rightarrow R/(I_1\cap I_2)\rightarrow R/I_1\oplus R/I_2\rightarrow R/(I_1+I_2)\rightarrow 0	\]
where the map $R/I_1\oplus R/I_2\rightarrow R/I_1+I_2$ is given by $(f_1,f_2)\mapsto f_1-f_2$.
\end{lemma}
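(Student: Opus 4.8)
The plan is to exhibit the sequence as the evident Mayer--Vietoris complex attached to the two quotient maps $R\to R/I_1$ and $R\to R/I_2$; when $I_1+I_2=R$ this is precisely the Chinese Remainder Theorem, and the general case is no harder.

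First I would define $\varphi\colon R\to R/I_1\oplus R/I_2$ by $f\mapsto(f\bmod I_1,\ f\bmod I_2)$. Its kernel is $I_1\cap I_2$ by the very definition of the intersection of ideals, so it induces an injection $\overline\varphi\colon R/(I_1\cap I_2)\hookrightarrow R/I_1\oplus R/I_2$, which will be the left-hand map of the sequence. Next I would check that the difference map $\psi\colon R/I_1\oplus R/I_2\to R/(I_1+I_2)$, $(f_1,f_2)\mapsto f_1-f_2$, is well defined (both $I_1$ and $I_2$ are contained in $I_1+I_2$, so the formula does not depend on the chosen representatives) and surjective (the image of $(f_1,0)$ is $f_1\bmod(I_1+I_2)$, and these already exhaust $R/(I_1+I_2)$). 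The composite $\psi\circ\overline\varphi$ sends the class of $f$ to $f-f=0$, so $\im\overline\varphi\subseteq\ker\psi$.

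The one point that requires an actual argument is the reverse inclusion $\ker\psi\subseteq\im\overline\varphi$. Given $(f_1,f_2)$ with $f_1-f_2\in I_1+I_2$, I would write $f_1-f_2=a_1+a_2$ with $a_i\in I_i$ and set $g:=f_1-a_1=f_2+a_2\in R$; then $g\equiv f_1\pmod{I_1}$ and $g\equiv f_2\pmod{I_2}$, so $\overline\varphi$ carries the class of $g$ to $(f_1,f_2)$. This establishes exactness at the middle term, and together with the injectivity of $\overline\varphi$ and the surjectivity of $\psi$ it yields the asserted short exact sequence. I do not anticipate any obstacle here; the only things to keep straight are the sign convention in $\psi$ (which matches the stated formula $(f_1,f_2)\mapsto f_1-f_2$) and the construction of the single element $g$ simultaneously lifting $f_1$ modulo $I_1$ and $f_2$ modulo $I_2$ in the last step.
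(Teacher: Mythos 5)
Your proof is correct and follows essentially the same elementary element-chase as the paper. The only cosmetic difference is that you construct the common lift $g$ of $(f_1,f_2)$ directly from a decomposition $f_1-f_2=a_1+a_2$ with $a_i\in I_i$, whereas the paper first reduces to the case $f_2=0$ and invokes the isomorphism $(I_1+I_2)/I_1\cong I_2/(I_1\cap I_2)$; both amount to the same computation.
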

\begin{proof}
The map $R/I_1\oplus R/I_2\rightarrow R/(I_1+I_2)$ is clearly surjective, and the map $R/I_1\cap I_2\rightarrow R/I_1\oplus R/I_2$ is clearly injective.  It remains to check exactness in the middle.  So suppose we have a pair $(f_1,f_2)\in R/I_1\oplus R/I_2$ such that $f_1-f_2=0$ in $R/(I_1+I_2)$.  Since the map $R/(I_1\cap I_2)\rightarrow R/I_2$ is surjective, we may assume that $f_2=0$, and therefore that $f_1\in (I_1+I_2)/I_1$.  But $(I_1+I_2)/I_1\cong I_2/(I_1\cap I_2)$ as $R$-modules; given a representation $f_1=g_1+g_2$ with $g_i\in I_i$, this isomorphism sends $f_1$ to the image of $g_2$ modulo $I_1\cap I_2$.  Then the natural map $R/(I_1\cap I_2)\rightarrow R/I_1\oplus R/I_2$ carries $g_2$ to $(f_1,0)$, as desired.
\end{proof}

We now return to the setting of interest.
\begin{corollary}\label{cor: ainf flat}
	For any $\alpha\in\mathfrak{m}_{K_\infty}^\flat$, the ring homomorphisms $\Z_p[\![[\alpha]]\!]\rightarrow\A_{\mathrm{inf}}$ and $\Z_p[\![[\alpha]]\!]\rightarrow \A_{\mathrm{inf}}^{H_K}$ are flat.
\end{corollary}
\begin{proof}
This follows from Lemma~\ref{lemma: m-regular top free} and \cite[Tag 06LE]{stacks-project}.
\end{proof}

\begin{corollary}
For any $\alpha\in\mathfrak{m}_{K_\infty^\flat}$, $\A_{\mathrm{inf}}^{H_K}$ and $\A_{\mathrm{inf}}$ are topologically free over $\Z_p[\![[\alpha]]\!]$.
\end{corollary}

\begin{corollary}\label{cor: top free}
For any $a\in\Q_{>0}$, the rings $\Lambda_{R_0,[0,\infty]}^{H_K}/(u-[\overline\pi]^{s(a)})$ and $\Lambda_{R_0,[0,\infty]}/(u-[\overline\pi]^{s(a)})$ are topologically free $(R_0\htimes\Z_p[\![[\overline\pi]^{s(a)}]\!])/(u-[\overline\pi]^{s(a)})$-modules.
\end{corollary}

\begin{lemma}\label{lemma: u-pi r0-flat}
Let $R$ be a pseudoaffinoid algebra over $\Z_p$,  with $R_0\subset R$ a noetherian ring of definition formally of finite type over $\Z_p$ $u\in R_0$ a pseudo-uniformizer $u$.  Suppose that $N$ is a finite flat $R_0$-module. Then $(N\htimes\A_{\mathrm{inf}})/(u-[\overline\pi]^{s(a)})$ and $(N\htimes\A_{\mathrm{inf}}^{H_K})/(u-[\overline\pi]^{s(a)})$ are flat over $R_0$.  
\end{lemma}
In particular, $\Lambda_{R_0,[0,\infty]}/(u-[\overline\pi]^{s(a)})$ and $\Lambda_{R_0,[0,\infty]}^{H_K}/(u-[\overline\pi]^{s(a)})$ are flat over $R_0$.
\begin{proof}
We first observe that
\[	(N\htimes\A_{\mathrm{inf}})/(u-[\overline\pi]^{s(a)})\cong \left((N\htimes\Z_p[\![[\overline\pi]^{s(a)}]\!])/(u-[\overline\pi]^{s(a)})\right)\htimes_{\Z_p[\![[\overline\pi]^{s(a)}]\!]}\A_{\mathrm{inf}}	\]
and
\[	(N\htimes\A_{\mathrm{inf}}^{H_K})/(u-[\overline\pi]^{s(a)})\cong \left((N\htimes\Z_p[\![[\overline\pi]^{s(a)}]\!])/(u-[\overline\pi]^{s(a)})\right)\htimes_{\Z_p[\![[\overline\pi]^{s(a)}]\!]}\A_{\mathrm{inf}}^{H_K}	\]
By Corollary~\ref{cor: top free}, it suffices to show that $(N\htimes\Z_p[\![[\overline\pi]^{s(a)}]\!])/(u-[\overline\pi]^{s(a)})$ is flat over $R_0$.

Since $R_0$, $\Z_p[\![[\overline\pi]^{s(a)}]\!]$, and $R_0\htimes\Z_p[\![[\overline\pi]^{s(a)}]\!]$ are all noetherian, to prove this we may apply~\cite[Theorem 22.6]{matsumura} with $A=R_0$ (resp. $\Z_p[\![[\overline\pi]^{s(a)}]\!]$), $B=R_0\htimes\Z_p[\![[\overline\pi]^{s(a)}]\!]$, $M=N\htimes\Z_p[\![[\overline\pi]^{s(a)}]\!]$, and $b=u-[\overline\pi]^{s(a)}$.  Since $B$ is flat over $A$ and $\mathfrak{m}\cap A$ is a maximal ideal $\mathfrak{n}\subset R_0$ (resp. $\mathfrak{m}\cap A=([\overline\pi]^{s(a)})$) for every maximal ideal $\mathfrak{m}\subset B$, it is enough to check that the image of $u-[\overline\pi]^{s(a)}$ is not a zero-divisor in $M/\mathfrak{n}$ (resp. $M/[\overline\pi]^{s(a)}$).  But $M/\mathfrak{m}\cong N/\mathfrak{n}\htimes\Z_p[\![[\overline\pi]^{s(a)}]\!]$ (resp. $M/[\overline\pi]^{s(a)}\cong N$), the image of $u-[\overline\pi]^{s(a)}$ is the image of $[\overline\pi]^{s(a)}$ since every maximal ideal of $R_0$ contains $u$ (resp. the image of $u-[\overline\pi]^{s(a)}$ is $u$), and $(N/\mathfrak{n})\htimes\Z_p[\![[\overline\pi]^{s(a)}]\!]$ is $[\overline\pi]^{s(a)}$-torsion-free (resp. $N$ is $u$-torsion-free).

Thus, we conclude that $(N\htimes\Z_p[\![[\overline\pi]^{s(a)}]\!])/(u-[\overline\pi]^{s(a)})$ is flat over $R_0$ and $\Z_p[\![[\overline\pi]^{s(a)}]\!]$, as desired.
\end{proof}

When $R$ has positive characteristic, we may relax the hypothesis on $N$ and apply the same argument:
\begin{corollary}\label{lemma:u-pi-torsion-free}
If $R$ is topologically finite type over $\F_p(\!(u)\!)$, $R_0\subset R$ is a ring of definition strictly topologically of finite type over $\F_p[\![u]\!]$, and $N$ is a finite $u$-torsion-free $R_0$-module, then $(N\htimes\A_{\mathrm{inf}})/(u-\overline\pi^{s(a)})$ has no $u$- or $\overline\pi$-torsion.  In particular, $\Lambda_{R_0,[0,\infty]}/(u-\overline\pi^{s(a)})$ has no $u$- or $\overline\pi$-torsion.
\end{corollary}
\begin{proof}
We again prove that $(N\htimes\F_p[\![\overline\pi^{s(a)}]\!])/(u-\overline\pi^{s(a)})$ is flat over $\F_p[\![\overline\pi^{s(a)}]\!]$ by applying ~\cite[Theorem 22.6]{matsumura} with $A=\F_p[\![\overline\pi^{s(a)}]\!]$, $B=R_0\htimes\F_p[\![\overline\pi^{s(a)}]\!]$, $M=N\htimes\F_p[\![\overline\pi^{s(a)}]\!]$, and $b=u-\overline\pi^{s(a)}$.  This implies that the module $(N\htimes\F_p[\![\overline\pi^{s(a)}]\!])/(u-\overline\pi^{s(a)})$ has no $\overline\pi$-torsion, and hence no $u$-torsion.
\end{proof}

\begin{lemma}
If $R$ is topologically of finite type over $\F_p(\!(u)\!)$, $R_0\subset R$ is a ring of definition strictly topologically of finite type over $\F_p[\![u]\!]$, and $N$ is a finite $u$-torsion-free $R_0$-module, then the natural map 
\[	N\htimes\A_{\mathrm{inf}}^{H_K}\rightarrow \left((N\htimes\A_{\mathrm{inf}})/(u-[\overline\pi]^{s(a)})\right)^{H_K}	\]
is surjective.
\end{lemma}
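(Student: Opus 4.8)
The plan is to realise the map in the statement as one of the maps in the long exact $H_K$-cohomology sequence attached to multiplication by $u-[\overline\pi]^{s(a)}$, and then to prove the required surjectivity by showing that $\H^1(H_K,N\htimes\A_{\mathrm{inf}})$ has no $u$-torsion.

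First I would record the reductions. Since $R$, and hence $N$, has characteristic $p$, we have $N\htimes\A_{\mathrm{inf}}=N\htimes_{\F_p}\mathscr{O}_{\C_p}^\flat$ and $[\overline\pi]^{s(a)}=\overline\pi^{s(a)}$, the latter being a genuine element of the perfect ring $\mathscr{O}_{\C_p}^\flat$ because $s(a)\in p^{\Z}$. A minor variant of the argument of Corollary~\ref{lemma:u-pi-torsion-free} (or of Lemma~\ref{lemma: u-pi r0-flat} via Corollary~\ref{cor: ainf flat}) shows that $u-[\overline\pi]^{s(a)}$ is a non-zero-divisor on $N\htimes\A_{\mathrm{inf}}$, so that we have a short exact sequence of topological $H_K$-modules
\[	0\rightarrow N\htimes\A_{\mathrm{inf}}\xrightarrow{\,\cdot(u-[\overline\pi]^{s(a)})\,}N\htimes\A_{\mathrm{inf}}\rightarrow(N\htimes\A_{\mathrm{inf}})/(u-[\overline\pi]^{s(a)})\rightarrow 0.	\]
Taking continuous $H_K$-cohomology, and using that $\H^0(H_K,N\htimes\A_{\mathrm{inf}})=N\htimes\A_{\mathrm{inf}}^{H_K}$ --- which follows by applying Lemma~\ref{lemma: ainf-invariants}(1) to each $N/u^kN$, a finite module over the discrete ring $R_0/u^kR_0$, and passing to the inverse limit --- the natural map of the statement is exactly the map $\H^0(H_K,N\htimes\A_{\mathrm{inf}})\rightarrow\H^0\bigl(H_K,(N\htimes\A_{\mathrm{inf}})/(u-[\overline\pi]^{s(a)})\bigr)$ occurring in this sequence. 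Hence it is surjective if and only if multiplication by $u-[\overline\pi]^{s(a)}$ is injective on $\H^1(H_K,N\htimes\A_{\mathrm{inf}})$.

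To prove this injectivity I would combine two facts. First, since $s(a)>0$ we have $\overline\pi^{s(a)}\in\mathfrak{m}_{\C_p^\flat}$, and the proof that multiplication by $[\alpha]$ annihilates $\H^1(H_K,R_0\htimes\A_{\mathrm{inf}})$ for $\alpha\in\mathfrak{m}_{\C_p^\flat}$ --- which proceeds by reducing modulo successive powers of $u$ and a Tate--Sen vanishing argument modulo $u$ --- applies verbatim with $N$ in place of $R_0$; thus multiplication by $\overline\pi^{s(a)}$ is the zero endomorphism of $\H^1(H_K,N\htimes\A_{\mathrm{inf}})$. Second, $\H^1(H_K,N\htimes\A_{\mathrm{inf}})$ has no $u$-torsion: since $N$ is $u$-torsion free there is an isomorphism of $R_0$-modules $uN\cong N$, and feeding the short exact sequence $0\rightarrow uN\htimes\A_{\mathrm{inf}}\rightarrow N\htimes\A_{\mathrm{inf}}\rightarrow(N/uN)\htimes\A_{\mathrm{inf}}\rightarrow 0$ into $H_K$-cohomology identifies the $u$-torsion submodule of $\H^1(H_K,N\htimes\A_{\mathrm{inf}})$ with the cokernel of $N\htimes\A_{\mathrm{inf}}^{H_K}\rightarrow\bigl((N/uN)\htimes\A_{\mathrm{inf}}\bigr)^{H_K}$; the target equals $(N/uN)\htimes\A_{\mathrm{inf}}^{H_K}$ by Lemma~\ref{lemma: ainf-invariants}(1), and the map is surjective because $N\rightarrow N/uN$ is surjective and $\htimes$ preserves surjections of finite modules. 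Granting both facts, if $(u-[\overline\pi]^{s(a)})c=0$ in $\H^1(H_K,N\htimes\A_{\mathrm{inf}})$ then $uc=\overline\pi^{s(a)}c=0$, so $c=0$.

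The formal cohomological skeleton above is easy; the main obstacle is the bookkeeping needed to upgrade the two earlier ingredients --- the computation of $H_K$-invariants of $\A_{\mathrm{inf}}$-coefficient modules, and the annihilation of $\H^1(H_K,R_0\htimes\A_{\mathrm{inf}})$ by $[\alpha]$, both originally proved with $R_0$ in place of $N$ --- to the module $N$, and to verify that forming $-\htimes_{\F_p}\mathscr{O}_{\C_p}^\flat$ and $-\htimes_{\F_p}\A_{\mathrm{inf}}^{H_K}$ is compatible with reduction modulo powers of $u$ and with the inverse limits defining these completed tensor products. One should also be careful that only the $u$-torsion freeness of $N$ (not finite flatness) is used, exactly as in Corollary~\ref{lemma:u-pi-torsion-free}.
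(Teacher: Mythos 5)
Your argument is correct in substance, but it takes a genuinely different (and heavier) route than the paper. The paper's proof is a two-line direct computation of invariants: since $N$ is $u$-torsion-free, Corollary~\ref{lemma:u-pi-torsion-free} shows $(N\htimes\F_p[\![\overline\pi^{s(a)}]\!])/(u-\overline\pi^{s(a)})$ is $\overline\pi$-torsion-free, so Lemma~\ref{lemma: m-regular top free} makes it topologically free over $\F_p[\![\overline\pi^{s(a)}]\!]$; base-changing to $\mathscr{O}_{\C_p}^\flat$ then exhibits $(N\htimes\A_{\mathrm{inf}})/(u-[\overline\pi]^{s(a)})$ as a topologically free $\mathscr{O}_{\C_p}^\flat$-module, whose $H_K$-invariants are computed coefficient-by-coefficient and equal $(N\htimes\A_{\mathrm{inf}}^{H_K})/(u-[\overline\pi]^{s(a)})$ -- visibly the image of $N\htimes\A_{\mathrm{inf}}^{H_K}$. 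Your long-exact-sequence strategy (reduce surjectivity on $\H^0$ to injectivity of $u-\overline\pi^{s(a)}$ on $\H^1$, then kill $\H^1$ almost-everywhere by $[\alpha]$-multiplication and rule out $u$-torsion) is essentially the strategy the paper is forced to adopt later, in the mixed-characteristic case (Proposition~\ref{hk-invts-pseudorigid} and the lemmas preceding it), where topological freeness over $\mathscr{O}_{\C_p}^\flat$ is not available. In equal characteristic $p$ it works but buys nothing, and it imports an obligation you leave implicit: the connecting map and the exactness you invoke are for \emph{continuous} cohomology, so you must check that multiplication by $u-\overline\pi^{s(a)}$ is a homeomorphism onto its (closed) image in $N\htimes\A_{\mathrm{inf}}$ and that the various quotient maps admit continuous set-theoretic sections -- the paper constructs such sections explicitly whenever it uses a connecting homomorphism (see the lemma producing $\widetilde\Lambda_{R_0,[a,b],0}^{H_K}\rightarrow\H^1(H_K,R_0\htimes\A_{\mathrm{inf}})$). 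These checks do go through here, precisely because of the topological freeness that the paper's direct argument uses; so your proof is not wrong, just longer and resting on the same underlying structure.
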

\begin{proof}
We have an isomorphism
\[	(N\htimes\A_{\mathrm{inf}})/(u-[\overline\pi]^{s(a)})\cong \left((N\htimes\F_p[\![\overline\pi^{s(a)}]\!])/(u-\overline\pi^{s(a)})\right)\htimes_{\F_p[\![\overline\pi^{s(a)}]\!]}\mathscr{O}_{\C_p}^\flat	\]
Since $N$ is $u$-torsion-free, Lemma~\ref{lemma:u-pi-torsion-free} implies $(N\htimes\F_p[\![\overline\pi^{s(a)}]\!])/(u-\overline\pi^{s(a)})$ is $\overline\pi$-torsion-free.  Then Lemma~\ref{lemma: m-regular top free} implies it is topologically free over $\F_p[\![\overline\pi^{s(a)}]\!]$, and the result follows.

It follows that 
\[	\left((N\htimes\A_{\mathrm{inf}})/(u-[\overline\pi]^{s(a)})\right)^{H_K}=(N\htimes\A_{\mathrm{inf}}^{H_K})/(u-[\overline\pi]^{s(a)})	\]
as desired.
\end{proof}

Now we can begin to bootstrap to the case where $R$ is $\Z_p$-flat.  

Recall that when $p\notin R^\times$, we defined a sequence of ideals $I_j:=p^jR\cap R_0$.  Each ideal $I_j$ is finitely generated (since $R_0$ is noetherian), so there is a sequence of integers $k_j\geq 1$ such that $u^{k_j}I_j\subset p^jR_0$.
\begin{lemma}
With notation as above, $k_j\leq jk_1$.
\end{lemma}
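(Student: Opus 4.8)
The plan is to argue by induction on $j$, taking as inductive hypothesis the statement $u^{jk_1}I_j\subseteq p^jR_0$ (which is equivalent to $k_j\le jk_1$, since $k_j$ is the \emph{least} such exponent and $u\in R_0$); the base case $j=1$ is immediate. For the inductive step, fix $x\in I_{j+1}$. Since $I_{j+1}=p^{j+1}R\cap R_0\subseteq pR\cap R_0=I_1$, the defining property of $k_1$ gives $u^{k_1}x\in pR_0$, say $u^{k_1}x=px_1$ with $x_1\in R_0$. I claim that in fact $x_1\in I_j$. Granting this, the inductive hypothesis yields $u^{jk_1}x_1\in p^jR_0$, and hence
\[ u^{(j+1)k_1}x=u^{jk_1}\bigl(u^{k_1}x\bigr)=p\cdot u^{jk_1}x_1\in p^{j+1}R_0 . \]
As $x\in I_{j+1}$ was arbitrary, this shows $u^{(j+1)k_1}I_{j+1}\subseteq p^{j+1}R_0$, i.e.\ $k_{j+1}\le(j+1)k_1$, completing the induction.

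It remains to verify the claim $x_1\in I_j=p^jR\cap R_0$. By construction $x_1\in R_0$, so I only need $x_1\in p^jR$. Write $x=p^{j+1}\rho$ with $\rho\in R$, which is possible since $x\in I_{j+1}\subseteq p^{j+1}R$; then $px_1=u^{k_1}x=p^{j+1}u^{k_1}\rho=p\cdot\bigl(p^ju^{k_1}\rho\bigr)$. This is the one point where care is needed: one wants to cancel the outer factor of $p$. Since we are now in the case that $R$ is $\Z_p$-flat, $R$ is $p$-torsion-free, and therefore so is its subring $R_0$; hence $p\bigl(x_1-p^ju^{k_1}\rho\bigr)=0$ forces $x_1=p^ju^{k_1}\rho\in p^jR$, so $x_1\in p^jR\cap R_0=I_j$ as claimed.

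Apart from this, everything is formal. I expect the only delicate step to be precisely the cancellation of $p$: it is what makes the $\Z_p$-flatness hypothesis do real work, and the argument genuinely breaks if $R$ carries $p$-torsion — which is presumably why general pseudoaffinoid $R$ is treated afterwards, by devissage along the chain $\{I_j\}$ rather than by this lemma directly.
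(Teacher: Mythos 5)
Your proof is correct and follows essentially the same route as the paper's: induction on $j$, factoring through $I_{j+1}\subseteq I_1$ to write $u^{k_1}x=px_1$, and using that $p$ is not a zero-divisor (the standing $\Z_p$-flatness assumption at this point of the paper) to cancel $p$ and apply the inductive hypothesis. If anything, you are more careful than the paper in explicitly verifying $x_1\in I_j$ before invoking the inductive hypothesis.
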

\begin{proof}
We proceed by induction on $j$. The case $j=1$ is trivial, so assume the result holds for $j-1$.  If $x\in I_j$, then $u^{k_j}x=u^{k_j-k_1}u^{k_1}x\in p^jR_0$.  But since $I_j\subset I_1$, it follows that $u^{k_1}x=px'$, and since $p$ is not a zero-divisor, $u^{k_j-k_1}x'\in p^{j-1}R_0$.  By the inductive hypothesis, $k_j-k_1\leq (j-1)k_1$, and the result follows.
\end{proof}

\begin{lemma}\label{lemma: int ij noeth}
With notation as above, 
\begin{enumerate}
\item	$\cap_jI_j=\{0\}$, 
\item	$\cap_jI_j\left((R_0\htimes\Z_p[\![[\overline\pi]^{s(a)}]\!])/(u-[\overline\pi]^{s(a)})\right)=\{0\}$.
\end{enumerate}
\end{lemma}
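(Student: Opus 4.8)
The plan is to derive both statements from Krull's intersection theorem. For (1) I would first note that $R=R_0[1/u]$ is Noetherian, being a localisation of the Noetherian ring $R_0$, and that since $I_j=p^jR\cap R_0$ one has
\[
  \bigcap_j I_j \;=\; R_0\cap\bigcap_j p^jR ,
\]
so it is enough to show $\bigcap_j p^jR=0$. Here $p$ is not a unit of $R$: because $R$ is a $D_\lambda$-algebra, the identity $p^m=u^{m'}\cdot(p^m/u^{m'})$ exhibits $p^m$, and hence $p$, as topologically nilpotent, and a topologically nilpotent unit would put $R$ in the classical (i.e. $p$-adically topologised) case excluded throughout. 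Thus $p$ generates a proper ideal of the Noetherian ring $R$, and Krull's intersection theorem gives $\bigcap_j p^jR=0$, whence $\bigcap_j I_j=0$.

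For (2) I would set $S:=(R_0\htimes_{\Z_p}\Z_p[\![[\overline\pi]^{s(a)}]\!])/(u-[\overline\pi]^{s(a)})$. This ring is Noetherian (as $R_0\htimes_{\Z_p}\Z_p[\![[\overline\pi]^{s(a)}]\!]$ is), it is flat over $R_0$ by the argument in the proof of Lemma~\ref{lemma: u-pi r0-flat}, hence $u$-torsion-free, and it is $u$-adically separated. From $u^{k_j}I_j\subseteq p^jR_0$ together with $k_j\le jk_1$ I get $u^{jk_1}I_jS\subseteq p^jS$ for all $j$. Consequently, if $x\in\bigcap_j I_jS$ then $u^{jk_1}x\in p^jS$ for every $j$, so $x\in p^j(S[1/u])$ for every $j$ (as $u$ is invertible in $S[1/u]$), i.e. $x\in S\cap\bigcap_j p^j(S[1/u])$; applying the argument of (1) to the Noetherian ring $S[1/u]$ --- in which $p$ is still a non-unit --- this intersection is $\{0\}$, so $x=0$. (Alternatively one can extract (2) from (1) using flatness of $S$ over $R_0$ and the fact that $\bigoplus_j I_j/I_{j+1}$ is finite over $R_0/I_1$, so that $(I_j)$ is a stable filtration and forming the $I_\bullet$-adic completion commutes with the base change $R_0\to S$.)

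The hard part is the reduction in (1), i.e. the passage $\bigcap_j p^jR=0$: it is immediate when $R$ is a domain, and in general it is where the hypotheses assembled above really get used --- that $R_0/I_j$ is $u$-torsion-free and $\bigoplus_j I_j/I_{j+1}$ is finite over $R_0/I_1$ let one recognise $(I_j)$ as a stable $(p)$-filtration of $R_0$, while the fact that every maximal ideal of $R_0$ contains $u$, hence $p$ (since $p^m\in u^{m'}R_0$), puts $p$ in the Jacobson radical of $R_0$, at which point Krull's intersection theorem for stable filtrations applies. Once that is in hand, (2) follows routinely from the flatness of $S$ over $R_0$ and the comparison $u^{jk_1}I_j\subseteq p^jR_0$.
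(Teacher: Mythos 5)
Your argument is essentially the paper's: part (1) is reduced to $\bigcap_j p^jR=0$ and settled by Krull's intersection theorem applied to the proper ideal generated by $p$ in the Noetherian ring $R$ (the paper phrases this via the relation $u^{jk_1}x=p^jx_j$ and the ideal $\left(\frac{p}{u^{k_1}}\right)R$, but since $u$ is a unit of $R$ this is the same ideal), and part (2) is handled by running the identical argument in the auxiliary Noetherian ring exactly as you do. Your closing caveat --- that for a general Noetherian ring one needs more than properness of the ideal (e.g.\ $p$ lying in the Jacobson radical of $R_0$, or a stable-filtration version of Krull) to conclude that the intersection of powers vanishes --- applies verbatim to the paper's own wording as well, so it reflects a shared imprecision rather than a divergence in approach.
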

\begin{proof}
If $x\in \cap_jI_j$, then for all $j\geq 1$, $u^{jk_1}x=p^jx_j$ for some $x_j\in R_0$.  This implies that $x\in \left(\frac{p}{u^{k_1}}\right)^jR$ for all $j$.  Since $p\notin R^\times$, $\left(\frac{p}{u^{k_1}}\right)R$ is a proper ideal, and Krull's intersection theorem implies that $\cap_j \left(\frac{p}{u^{k_1}}\right)^jR=\{0\}$.  Since $\left((R_0\htimes\Z_p[\![[\overline\pi]^{s(a)}]\!])/(u-[\overline\pi]^{s(a)})\right)$ is noetherian, the same argument applies to $\cap_jI_j\left((R_0\htimes\Z_p[\![[\overline\pi]^{s(a)}]\!])/(u-[\overline\pi]^{s(a)})\right)$.
\end{proof}

We first treat the case where $R$ is a $\Z/p^n$-algebra (and $I_n=(0)$).
\begin{corollary}
If $R$ is topologically of finite type over $(\Z/p^n)(\!(u)\!)$ and $R_0\subset R$ is a ring of definition strictly topologically of finite type over $(\Z/p^n)[\![u]\!]$, then the natural map 
\[	\Lambda_{R_0,[0,\infty]}^{H_K}\rightarrow \left(\Lambda_{R_0,[0,\infty]}/(u-[\overline\pi]^{s(a)})\right)^{H_K}	\]
is surjective.
\end{corollary}
\begin{proof}
We have seen that 
\[	\left(((R_0/I_1)\htimes\A_{\mathrm{inf}})/(u-[\overline\pi]^{s(a)})\right)^{H_K}\cong  ((R_0/I)\htimes\A_{\mathrm{inf}}^{H_K})/(u-[\overline\pi]^{s(a)})	\]
and we will proceed by induction on $j$.  We have a commutative diagram
\begin{equation*}
\resizebox{\displaywidth}{!}{
$
\begin{tikzcd}[ampersand replacement=\&]
0\arrow[r] \& (I_j/I_{j+1})\htimes\A_{\mathrm{inf}}^{H_K}\arrow[r]\arrow[two heads,d] \& (R_0/I_{j+1})\htimes\A_{\mathrm{inf}}^{H_K}\arrow[r]\arrow[d] \& (R_0/I_j)\htimes\A_{\mathrm{inf}}^{H_K}\arrow[r]\arrow[two heads,d] \& 0	\\
0\arrow[r] \& \left((I_j/I_{j+1})\htimes\A_{\mathrm{inf}}/(u-[\overline\pi]^{s(a)}\right)^{H_K}\arrow[r] \& \left((R_0/I_{j+1})\htimes\A_{\mathrm{inf}}/(u-[\overline\pi]^{s(a)})\right)^{H_K}\arrow[r] \& \left((R_0/I_{j})\htimes\A_{\mathrm{inf}}/(u-[\overline\pi]^{s(a)})\right)^{H_K} \& \&
\end{tikzcd}
$
}
\end{equation*}
Then the snake lemma implies that the middle arrow is surjective, as well.
\end{proof}

Now we return to the case of a general pseudoaffinoid algebra $R$ (where $p$ is not a unit).
\begin{lemma}\label{lemma: u-pi torsion}
The natural map 
\[	\Lambda_{R_0,[0,\infty]}^{H_K}/(u-[\overline\pi]^{s(a)})\rightarrow \Lambda_{R_0,[0,\infty]}/(u-[\overline\pi]^{s(a)})	\]
is injective.
\end{lemma}
\begin{proof}
We need to check that 
\[	(u-[\overline\pi]^{s(a)})\Lambda_{R_0,[0,\infty]}\cap \Lambda_{R_0,[0,\infty]}^{H_K}=(u-[\overline\pi]^{s(a)})\Lambda_{R_0,[0,\infty]}^{H_K}	\]
and it suffices to check that $\Lambda_{R_0,[0,\infty]}$ has no $u-[\overline\pi]^{s(a)}$-torsion.  But $(R_0/u)\htimes\A_{\mathrm{inf}}$ has no $[\overline\pi]^{s(a)}$-torsion, so if $x\in \Lambda_{R_0,[0,\infty]}$ is annihilated by $u-[\overline\pi]^{s(a)}$, it is a multiple of $u$.  In addition, $\Lambda_{R_0,[0,\infty]}$ has no $u$-torsion, so if $x=ux'$ is killed by $u-[\overline\pi]^{s(a)}$, so is $x'$.  Replacing $x$ with $x'$ and repeating the argument, we see that $x\in u^n\Lambda_{R_0,[0,\infty]}$ for all $n$, so $x=0$.
\end{proof}

\begin{lemma}\label{lemma: invts of u-mults}
If $x\in \Lambda_{R_0,[0,\infty]}$ and the image of $x$ in $\Lambda_{R_0,[0,\infty]}/(u-[\overline\pi]^{s(a)})$ is a multiple of $u$, then the image of $x$ in $\Lambda_{R_0,[0,\infty]}^{H_K}/(u-[\overline\pi]^{s(a)})$ is a multiple of $u$.
\end{lemma}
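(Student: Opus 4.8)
The plan is to recast divisibility by $u$ as a membership condition, reduce the assertion about $H_K$-invariants to the torsion-coefficient cases already established, and then bootstrap along the $p$-adic filtration $\{I_j\}$. Abbreviate $B:=R_0\htimes\A_{\mathrm{inf}}$, $A:=R_0\htimes\A_{\mathrm{inf}}^{H_K}=B^{H_K}$, $\theta:=[\overline\pi]^{s(a)}$, and write $\bar B:=B/(u-\theta)$, $\bar A:=A/(u-\theta)A$; the images of $u$ and $\theta$ in $\bar B$ agree, call this common image $\bar u$. For $x'\in B$ one has $x'\in uB+\theta B$ if and only if the image of $x'$ in $\bar B$ lies in $\bar u\bar B$, and likewise for $A$; so the hypothesis reads $x\in uB+\theta B$ and we must produce $x\in uA+\theta A$. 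Write the image of $x$ in $\bar B$ as $\bar u\bar y$. First I would note that $\bar u$ is a non-zero-divisor in $\bar B$: by Corollary~\ref{cor: top free}, $\bar B$ is topologically free over $(R_0\htimes\Z_p[\![\theta]\!])/(u-\theta)$, and this ring is isomorphic to $R_0$ (sending $\theta\mapsto u$), which is $u$-torsion-free because $R_0\hookrightarrow R=R_0[1/u]$. Since $u$ and $\theta$ are $H_K$-fixed, $\bar B$ carries an $H_K$-action and the image of $x$ is $H_K$-fixed; applying $\sigma-1$ to $\bar u\bar y=\sigma(\bar u\bar y)=\bar u\,\sigma(\bar y)$ and cancelling $\bar u$ gives $\sigma(\bar y)=\bar y$ for all $\sigma\in H_K$. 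It therefore suffices to show that the $H_K$-fixed element $\bar y$ lies in the image of the (injective, by Lemma~\ref{lemma: u-pi torsion}) map $\bar A\to\bar B$: granting this, $\bar y$ is the image of some $\bar z\in\bar A$, and since $\bar u\bar z\in\bar A$ and the image of $x$ in $\bar B$ both equal $\bar u\bar y$, injectivity of $\bar A\to\bar B$ forces the image of $x$ in $\bar A$ to be $\bar u\bar z$, a multiple of $u$.

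To prove $\bar y\in\im(\bar A\to\bar B)$ I would argue by d\'evissage along $\{I_j\}$. For each $j$, $R_0/I_j$ is a $u$-torsion-free ring of definition of the $\Z/p^j$-pseudoaffinoid algebra $R/p^jR$, so the surjectivity corollary for $\Z/p^n$-coefficient rings proved above, together with Lemma~\ref{lemma: u-pi torsion} applied to $R_0/I_j$, shows that $\bar A/I_j\bar A=(R_0/I_j\htimes\A_{\mathrm{inf}}^{H_K})/(u-\theta)$ maps isomorphically onto $\big((R_0/I_j\htimes\A_{\mathrm{inf}})/(u-\theta)\big)^{H_K}=(\bar B/I_j\bar B)^{H_K}$. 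Hence the image $\bar y_j$ of $\bar y$ in $\bar B/I_j\bar B$, being $H_K$-fixed, lifts to a \emph{unique} element $\bar z_j\in\bar A/I_j\bar A$; uniqueness forces the $\bar z_j$ to be compatible under the reduction maps, so they define an element of $\varprojlim_j\bar A/I_j\bar A$. Using $\bigcap_jI_j=0$ (Lemma~\ref{lemma: int ij noeth}), the topological freeness of $\bar A$ and $\bar B$ over $R_0$ (Corollary~\ref{cor: top free}), and the bound $k_j\le jk_1$ relating the $\{I_j\}$-adic topology to the $p$-adic one, I would pass to the limit to produce $\bar z\in\bar A$ reducing to $\bar z_j$ for every $j$; then $\bar z-\bar y\in\bigcap_jI_j\bar B=0$, so $\bar y=\bar z$ lies in the image of $\bar A$, as wanted.

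The main obstacle is precisely this gluing/limiting step, since the $\{I_j\}$-adic topology on $R_0$ is coarser than its natural one and the compatible liftings $\bar z_j$ do not obviously converge. As in the proof of the $\Z/p^n$-coefficient corollary, one handles this by an explicit induction on $j$ using the short exact sequences $0\to I_j/I_{j+1}\to R_0/I_{j+1}\to R_0/I_j\to 0$ and the snake lemma: the graded pieces $I_j/I_{j+1}$ are finite $u$-torsion-free $R_0/I_1$-modules in characteristic $p$, so Corollary~\ref{lemma:u-pi-torsion-free} and the positive-characteristic surjectivity lemma apply to them, and the bounds $\bigcap_jI_j=0$ and $k_j\le jk_1$ then control the convergence of the resulting sequence of partial liftings. (Everywhere one uses, as built into the setup, that $p$ is not a pseudo-uniformizer of $R$, so that this induction genuinely starts from characteristic $p$.)
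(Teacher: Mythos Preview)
Your strategy is to deduce the lemma from the statement that every $H_K$-fixed element of $\bar B=(R_0\htimes\A_{\mathrm{inf}})/(u-\theta)$ lies in the image of $\bar A=(R_0\htimes\A_{\mathrm{inf}}^{H_K})/(u-\theta)$, and then to prove this by d\'evissage along the $I_j$. Two comments are in order.

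First, the limit step is a genuine gap. You correctly produce compatible liftings $\bar z_j\in\bar A/I_j\bar A$, but you need an element of $\bar A$, and $\bar A$ is $u$-adically complete, not $\{I_j\}$-adically complete. Your proposed fix (``the bounds $\cap_jI_j=0$ and $k_j\le jk_1$ control the convergence'') does not do this: the inclusion $u^{jk_1}I_j\subset p^jR_0$ goes the wrong way for producing $u$-adic smallness from $I_j$-adic smallness. In the paper this exact difficulty is handled in Proposition~\ref{hk-invts-pseudorigid}, and the device there is to first multiply by a Teichm\"uller lift $[\alpha]$ with $\alpha\in\mathfrak m_{\widehat K_\infty^\flat}$; the factor $[\overline\pi]^{(p-1)^2j/p^2a}$ that appears at each inductive step is what forces $u$-adic (equivalently $[\overline\pi]$-adic) convergence. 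Without that trick the inverse limit argument need not produce an element of $\bar A$. Note also that, in the paper's logical order, the statement $\bar B^{H_K}=\im(\bar A)$ is Corollary~\ref{cor: hk invts mod u-pi}, which is proved \emph{using} the present lemma together with Proposition~\ref{hk-invts-pseudorigid}; so your argument is reproving a strictly harder result in order to obtain this one.

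Second, the paper's proof is far more elementary and sidesteps all of this. Instead of reducing modulo $u-\theta$, it reduces modulo $u$: writing $x=ux'+(u-\theta)y$ in $R_0\htimes\A_{\mathrm{inf}}$ and looking modulo $u$ gives $x\equiv -\theta y$, and since $(R_0/u)\htimes\A_{\mathrm{inf}}$ has no $\theta$-torsion and $\big((R_0/u)\htimes\A_{\mathrm{inf}}\big)^{H_K}=(R_0/u)\htimes\A_{\mathrm{inf}}^{H_K}$ (Lemma~\ref{lemma: ainf-invariants}), one finds $y\equiv y'\pmod u$ with $y'\in R_0\htimes\A_{\mathrm{inf}}^{H_K}$. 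Then $x-(u-\theta)y'\in (R_0\htimes\A_{\mathrm{inf}}^{H_K})\cap u(R_0\htimes\A_{\mathrm{inf}})=u(R_0\htimes\A_{\mathrm{inf}}^{H_K})$, the last equality holding because $R_0\htimes\A_{\mathrm{inf}}$ has no $u$-torsion. That is the whole proof: the key idea you are missing is to work modulo $u$ rather than modulo $u-\theta$, which reduces to the easy computation of $H_K$-invariants in Lemma~\ref{lemma: ainf-invariants} instead of the hard one in Corollary~\ref{cor: hk invts mod u-pi}.
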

\begin{proof}
We may write $x=ux'+(u-[\overline\pi]^{s(a)})y$ for $x',y\in \Lambda_{R_0,[0,\infty]}$.  Reducing modulo $u$, we have $x\equiv -[\overline\pi]^{s(a)}y$; since $(R_0/u)\htimes\A_{\mathrm{inf}}$ has no $[\overline\pi]^{s(a)}$-torsion, we see that $y\equiv y'\pmod{u}$, where $y'\in \Lambda_{R_0,[0,\infty]}^{H_K}$.  In other words,
\[	x=ux'+(u-[\overline\pi]^{s(a)})(y'+uz)	\]
for some $z\in \Lambda_{R_0,[0,\infty]}$.  But then 
\[	x-(u-[\overline\pi]^{s(a)})y'\in \Lambda_{R_0,[0,\infty]}^{H_K}\cap u\Lambda_{R_0,[0,\infty]}	\]
Since $\Lambda_{R_0,[0,\infty]}$ has no $u$-torsion, 
\[	\Lambda_{R_0,[0,\infty]}^{H_K}\cap u\Lambda_{R_0,[0,\infty]}=u\Lambda_{R_0,[0,\infty]}^{H_K}	\]
and we are done.
\end{proof}

\begin{lemma}\label{lemma: inj mod u Ij}
The natural map 
\[	(R_0/(u,I_j))\otimes(\A_{\mathrm{inf}}^{H_K}/[\overline\pi]^{s(a)})\rightarrow (R_0/(u,I_j))\otimes(\A_{\mathrm{inf}}/[\overline\pi]^{s(a)})	\]
is injective for all $j\geq 1$.
\end{lemma}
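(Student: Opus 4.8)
\textit{Approach.} The plan is to prove the lemma by d\'evissage along the $p$-adic filtration of $R_0/(u,I_j)$, reducing to the case of coefficients in an $\F_p$-vector space, where the statement amounts to the elementary injectivity $\widehat{\mathscr{O}}_{K_\infty}^\flat/\overline\pi^{s(a)}\hookrightarrow \mathscr{O}_{\C_p}^\flat/\overline\pi^{s(a)}$. Throughout, set $M_1:=\A_{\mathrm{inf}}^{H_K}/[\overline\pi]^{s(a)}$ and $M_2:=\A_{\mathrm{inf}}/[\overline\pi]^{s(a)}$; the map $M_1\to M_2$ is injective because $\A_{\mathrm{inf}}^{H_K}=W(\widehat{\mathscr{O}}_{K_\infty}^\flat)$ and $\A_{\mathrm{inf}}$ is $[\overline\pi]$-torsion-free, and all tensor products are taken over $\Z$.

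First I would record two inputs. (i) The ring $R_0/(u,I_j)$ is killed by a fixed power $p^m$ of $p$: since $R_0$ is strictly topologically of finite type over $D_\lambda^\circ=\Z_p[\![u]\!]\langle p^m/u^{m'}\rangle$ with $\lambda=m'/m$, the element $p^m=u^{m'}\cdot(p^m/u^{m'})$ lies in $uR_0$, so $p^m$ maps to $0$ in $R_0/(u,I_j)$. (ii) $M_2$ has no $p$-torsion: because $[\overline\pi]^{s(a)}$ is the Teichm\"uller lift of $\overline\pi^{s(a)}$ and $\mathscr{O}_{\C_p}^\flat$ is perfect, multiplicativity of $[\,\cdot\,]$ identifies $[\overline\pi]^{s(a)}\A_{\mathrm{inf}}$ with the set of those $\sum_k p^k[z_k]$ all of whose Teichm\"uller coordinates $z_k$ are divisible by $\overline\pi^{s(a)}$, and from this description one checks directly that $pz\in[\overline\pi]^{s(a)}\A_{\mathrm{inf}}$ forces $z\in[\overline\pi]^{s(a)}\A_{\mathrm{inf}}$. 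In particular $\Tor_1^{\Z}(V,M_2)=V\otimes_{\F_p}M_2[p]=0$ for every $\F_p$-vector space $V$.

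Next I would dispatch the $\F_p$-coefficient case: for an $\F_p$-vector space $V$ one has $V\otimes M_i=V\otimes_{\F_p}(M_i/pM_i)$, with $M_1/pM_1=\widehat{\mathscr{O}}_{K_\infty}^\flat/\overline\pi^{s(a)}$ and $M_2/pM_2=\mathscr{O}_{\C_p}^\flat/\overline\pi^{s(a)}$. The natural map between the latter two is injective, i.e. $\overline\pi^{s(a)}\mathscr{O}_{\C_p}^\flat\cap\widehat{\mathscr{O}}_{K_\infty}^\flat=\overline\pi^{s(a)}\widehat{\mathscr{O}}_{K_\infty}^\flat$, since if $y\in\mathscr{O}_{\C_p}^\flat$ and $\overline\pi^{s(a)}y$ is $H_K$-invariant then $y$ is $H_K$-invariant ($\mathscr{O}_{\C_p}^\flat$ being a domain) and $(\mathscr{O}_{\C_p}^\flat)^{H_K}=\widehat{\mathscr{O}}_{K_\infty}^\flat$; tensoring this injection with the free $\F_p$-module $V$ preserves injectivity. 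I would then filter $N:=R_0/(u,I_j)$ as $N=p^0N\supseteq pN\supseteq\cdots\supseteq p^mN=0$, observing that each $\gr^{i}N:=p^iN/p^{i+1}N$ is killed by $p$ and hence is an $\F_p$-vector space. The claim is then proven by descending induction on $i$: $p^iN\otimes M_1\to p^iN\otimes M_2$ is injective for all $i$, the case $i=m$ being vacuous. For the inductive step one applies $-\otimes M_1$ and $-\otimes M_2$ to $0\to p^{i+1}N\to p^iN\to\gr^{i}N\to0$ and compares via $M_1\hookrightarrow M_2$; since $\Tor_1^{\Z}(\gr^{i}N,M_2)=0$ the row $0\to p^{i+1}N\otimes M_2\to p^iN\otimes M_2\to\gr^{i}N\otimes M_2\to0$ is exact, and a short diagram chase --- if $x\in p^iN\otimes M_1$ dies in $p^iN\otimes M_2$ then its image in $\gr^{i}N\otimes M_1$ dies in $\gr^{i}N\otimes M_2$, hence is $0$ by the $\F_p$-case, so $x$ is the image of some $y\in p^{i+1}N\otimes M_1$, and $y$ dies in $p^{i+1}N\otimes M_2$ by left-exactness, hence is $0$ by the inductive hypothesis, so $x=0$ --- completes the step. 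Taking $i=0$ gives the lemma.

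\textit{Main obstacle.} The crux is input (ii), the $p$-torsion-freeness of $\A_{\mathrm{inf}}/[\overline\pi]^{s(a)}$: it is what makes the obstructing $\Tor$ groups vanish and turns the d\'evissage into a purely formal diagram chase. I expect the one point requiring care to be the Teichm\"uller description of the ideal $[\overline\pi]^{s(a)}\A_{\mathrm{inf}}$, which rests on the perfectness of $\mathscr{O}_{\C_p}^\flat$.
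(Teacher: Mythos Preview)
Your proof is correct and follows essentially the same strategy as the paper's: reduce to the $\F_p$-coefficient case (where the map is $\widehat{\mathscr{O}}_{K_\infty}^\flat/\overline\pi^{s(a)}\hookrightarrow\mathscr{O}_{\C_p}^\flat/\overline\pi^{s(a)}$ tensored with a free $\F_p$-module) and then perform a finite d\'evissage along a filtration with $\F_p$-vector-space graded pieces. The only differences are cosmetic: the paper inducts on $j$ using the successive quotients $I_{j-1}/(u,I_j)$ (each killed by $p$), whereas you fix $j$ and filter $R_0/(u,I_j)$ by powers of $p$; and you make explicit the input that $\A_{\mathrm{inf}}/[\overline\pi]^{s(a)}$ is $p$-torsion-free (hence $\Z$-flat), which the paper uses tacitly when it writes both rows of its diagram as short exact.
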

\begin{proof}
We first show that the natural map $\widehat{\mathcal{O}}_{K_\infty}^\flat/[\overline\pi]^{s(a)}\rightarrow \mathcal{O}_{\C_p}^\flat/[\overline\pi]^{s(a)}$ is injective.  But the cokernel of the injection $\widehat{\mathcal{O}}_{K_\infty}^\flat\rightarrow \mathcal{O}_{\C_p}^\flat$ is an $\widehat{\mathcal{O}}_{K_\infty}^\flat$-module with no $[\overline\pi]^{s(a)}$-torsion, so this follows.

We proceed by induction on $j$.  If $j=1$, then $R_0/(u,I_1)$ is a discrete $\F_p$-vector space, and therefore the map $\widehat{\mathcal{O}}_{K_\infty}^\flat/[\overline\pi]^{s(a)}\rightarrow \mathcal{O}_{\C_p}^\flat/[\overline\pi]^{s(a)}$ remains injective after tensoring with $R_0/(u,I_1)$.  So assume the result for $j-1$.  We have a commutative diagram
\begin{equation*}
\resizebox{\displaywidth}{!}{
$
\begin{tikzcd}[ampersand replacement=\&]
0\ar[r] \& (I_{j-1}/(u,I_j))\otimes(\A_{\mathrm{inf}}^{H_K}/[\overline\pi]^{s(a)}) \ar[r]\ar[d] \& (R_0/(u,I_j))\otimes(\A_{\mathrm{inf}}^{H_K}/[\overline\pi]^{s(a)}) \ar[r]\ar[d] \& (R_0/(u,I_{j-1}))\otimes(\A_{\mathrm{inf}}^{H_K}/[\overline\pi]^{s(a)}) \ar[r]\ar[d] \& 0	\\
0 \ar[r] \& (I_{j-1}/(u,I_j))\otimes(\A_{\mathrm{inf}}/[\overline\pi]^{s(a)}) \ar[r] \& (R_0/(u,I_j))\otimes(\A_{\mathrm{inf}}/[\overline\pi]^{s(a)}) \ar[r] \& (R_0/(u,I_{j-1}))\otimes(\A_{\mathrm{inf}}/[\overline\pi]^{s(a)}) \ar[r] \& 0
\end{tikzcd}
$
}
\end{equation*}
Since $I_{j-1}/(u,I_j)$ is annihilated by $p$, the left vertical arrow is injective.  The right vertical arrow is injective by the inductive hypothesis.  A diagram chase then implies that the middle vertical arrow is injective, as desired.
\end{proof}

Applying this to $\Lambda_{R_0,[0,\infty]}^{H_K}/(u-[\overline\pi]^{s(a)})$ and $\Lambda_{R_0,[0,\infty]}/(u-[\overline\pi]^{s(a)})$ yields the following:
\begin{lemma}\label{lemma: a-inf crt}
For each $j\geq 1$, there are exact sequences

\begin{equation*}
\resizebox{\displaywidth}{!}{$
		0\rightarrow\Lambda_{R_0,[0,\infty]}^{H_K}/(u-[\overline\pi]^{s(a)},uI_{j})\rightarrow \substack{\Lambda_{R_0,[0,\infty]}^{H_K}/(u,[\overline\pi]^{s(a)})\\ \oplus \\ \Lambda_{R_0,[0,\infty]}^{H_K}/(u-[\overline\pi]^{s(a)},I_{j})} \rightarrow \Lambda_{R_0,[0,\infty]}^{H_K}/(u,[\overline\pi]^{s(a)},I_{j})\rightarrow 0
$}
\end{equation*}
and
\begin{equation*}
\resizebox{\displaywidth}{!}{$
	0\rightarrow\Lambda_{R_0,[0,\infty]}/(u-[\overline\pi]^{s(a)},uI_{j})\rightarrow \substack{\Lambda_{R_0,[0,\infty]}/(u,[\overline\pi]^{s(a)}) \\ \oplus \\ \Lambda_{R_0,[0,\infty]}/(u-[\overline\pi]^{s(a)},I_{j})}\rightarrow \Lambda_{R_0,[0,\infty]}/(u,[\overline\pi]^{s(a)},I_{j})\rightarrow 0
$}
\end{equation*}

\end{lemma}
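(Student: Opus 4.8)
The plan is to obtain both sequences as instances of the elementary Chinese‑remainder exact sequence of Lemma~\ref{lemma: fancy crt}. Set $M:=(R_0\htimes\A_{\mathrm{inf}})/(u-[\overline\pi]^{s(a)})$ and $M^{H_K}:=(R_0\htimes\A_{\mathrm{inf}}^{H_K})/(u-[\overline\pi]^{s(a)})$; the two cases are handled identically, so I describe the $H_K$‑invariant one. Inside the ring $M^{H_K}$ take the two ideals $(u)$ and $I_jM^{H_K}$. Since $u$ and $[\overline\pi]^{s(a)}$ have the same image in $M^{H_K}$, we get $M^{H_K}/(u)=(R_0\htimes\A_{\mathrm{inf}}^{H_K})/(u,[\overline\pi]^{s(a)})$, then $M^{H_K}/I_jM^{H_K}=(R_0\htimes\A_{\mathrm{inf}}^{H_K})/(u-[\overline\pi]^{s(a)},I_j)$, and $M^{H_K}/\bigl((u)+I_jM^{H_K}\bigr)=(R_0\htimes\A_{\mathrm{inf}}^{H_K})/(u,[\overline\pi]^{s(a)},I_j)$, which are precisely the two summands in the middle and the term on the right of the asserted sequence. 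Thus Lemma~\ref{lemma: fancy crt} yields the sequence as soon as we show $(u)\cap I_jM^{H_K}=uI_jM^{H_K}$, since then the left‑hand term $M^{H_K}/\bigl((u)\cap I_jM^{H_K}\bigr)$ is exactly $(R_0\htimes\A_{\mathrm{inf}}^{H_K})/(u-[\overline\pi]^{s(a)},uI_j)$.

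The inclusion $uI_jM^{H_K}\subseteq(u)\cap I_jM^{H_K}$ is obvious, and the reverse inclusion is equivalent to multiplication by $u$ being injective on $M^{H_K}/I_jM^{H_K}$: if $x=us$ lies in $I_jM^{H_K}$ then $u\bar s=0$ in $M^{H_K}/I_jM^{H_K}$, whence $\bar s=0$ and $x=us\in uI_jM^{H_K}$. Now $M^{H_K}/I_jM^{H_K}=M^{H_K}\otimes_{R_0}(R_0/I_j)$, the ring $R_0/I_j$ is $u$‑torsion‑free, and $M^{H_K}$ is flat over $R_0$ by Lemma~\ref{lemma: u-pi r0-flat}; applying the exact functor $M^{H_K}\otimes_{R_0}(-)$ to the injection given by multiplication by $u$ on $R_0/I_j$ shows that multiplication by $u$ is injective on $M^{H_K}/I_jM^{H_K}$. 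The same argument, with $M$ in place of $M^{H_K}$, handles the non‑invariant sequence.

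I expect the only point requiring any care to be this last identification of $(u)\cap I_jM^{H_K}$, and even that is essentially immediate given the flatness of $M^{H_K}$ over $R_0$ proved in Lemma~\ref{lemma: u-pi r0-flat} together with the $u$‑torsion‑freeness of $R_0/I_j$; the rest is a direct bookkeeping of the four terms produced by Lemma~\ref{lemma: fancy crt}. (If one preferred to avoid citing flatness over $R_0$, the non‑zero‑divisor property could instead be obtained by filtering $R_0/I_j$ through the finite $u$‑torsion‑free $R_0/I_1$‑modules $I_{j'}/I_{j'+1}$ and using the characteristic‑$p$ torsion‑freeness of Corollary~\ref{lemma:u-pi-torsion-free}, together with Lemma~\ref{lemma: inj mod u Ij} to compare the invariant and non‑invariant situations; this is the route suggested by the preceding sequence of lemmas, but it is longer.)
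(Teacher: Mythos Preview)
Your proposal is correct and essentially the same approach as the paper's. The paper applies Lemma~\ref{lemma: fancy crt} at the level of $R_0$ (proving $(u)\cap I_j=uI_j$ directly from the definition $I_j=p^jR\cap R_0$) and then tensors the resulting exact sequence with $M^{H_K}$ and $M$ via the flatness of Lemma~\ref{lemma: u-pi r0-flat}; you instead apply Lemma~\ref{lemma: fancy crt} at the level of $M^{H_K}$ and reduce the intersection identity there to the $u$-torsion-freeness of $R_0/I_j$ plus the same flatness --- but since $R_0/I_j$ being $u$-torsion-free is exactly the statement $(u)\cap I_j=uI_j$, the substantive content is identical.
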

\begin{proof}
By Lemma~\ref{lemma: fancy crt}, we have exact sequences
\[	0\rightarrow R_0/((u)\cap I_{j})\rightarrow R_0/(u)\oplus R_0/I_{j}\rightarrow R_0/((u)+I_{j})\rightarrow 0	\]
Certainly $uI_j\subset (u)\cap I_{j}$. On the other hand, if $uf\in I_{j}$ for some $f\in R_0$, then $u^kuf\in p^{j}R_0$ for some $k\gg 0$, and so $f\in I_{j}$.  
Thus, the inclusion $uI_{j}\subset (u)\cap I_{j}$ is actually an equality, and we have exact sequences
\[	0\rightarrow R_0/uI_{j}\rightarrow R_0/(u)\oplus R_0/I_{j}\rightarrow R_0/((u)+I_{j})\rightarrow 0	\]
By Lemma~\ref{lemma: u-pi r0-flat}, $\Lambda_{R_0,[0,\infty]}^{H_K}/(u-[\overline\pi]^{s(a)})$ and $\Lambda_{R_0,[0,\infty]}/(u-[\overline\pi]^{s(a)})$ are both flat over $R_0$. Thus, we may extend scalars from $R_0$ to $\Lambda_{R_0,[0,\infty]}^{H_K}/(u-[\overline\pi]^{s(a)})$ to obtain the desired result.
\end{proof}

\begin{proposition}\label{hk-invts-pseudorigid}
If $x\in\left(\Lambda_{R_0,[0,\infty]}/(u-[\overline\pi]^{s(a)})\right)^{H_K}$ and $\alpha\in\mathfrak{m}_{\widehat{K}_\infty^\flat}$, then $[\alpha]x$ is in the image of $\Lambda_{R_0,[0,\infty]}^{H_K}$.
\end{proposition}
\begin{proof}

We first consider the image of $x$ modulo $I_1$.  Since it is fixed by $H_K$, it defines an element of $((R_0/I_1)\htimes \A_{\mathrm{inf}}^{H_K})/(u-[\overline\pi]^{s(a)})$, and therefore so does $[\alpha]x$.

Considering instead the image of $x$ modulo $u$, we obtain an element of $\left((R_0/u)\otimes(\A_{\mathrm{inf}}/[\overline\pi]^{s(a)})\right)^{H_K}$.  There is a sequence of $H_K$-equivariant maps
\begin{equation*}
	\begin{split}
\A_{\mathrm{inf}}/[\overline\pi]^{s(a)}\xrightarrow{\times [\alpha]}[\alpha]/[\overline\pi^{s(a)}\alpha]\rightarrow [\alpha]/&[\overline\pi^{s(a)}\mathfrak{m}_{\C_p^\flat}]\rightarrow [\mathfrak{m}_{\C_p^\flat}]/[\overline\pi^{s(a)}\mathfrak{m}_{\C_p^\flat}]	\\
&\rightarrow [\mathfrak{m}_{\C_p^\flat}]/[\overline\pi]^{s(a)}\A_{\mathrm{inf}}\rightarrow \A_{\mathrm{inf}}/[\overline\pi]^{s(a)}
\end{split}
\end{equation*}
Since $\left([\mathfrak{m}_{\C_p^\flat}]/[\overline\pi^{s(a)}\mathfrak{m}_{\C_p^\flat}]\right)^{H_K}=[\mathfrak{m}_{\widehat{K}_\infty^\flat}]/[\overline\pi^{s(a)}\mathfrak{m}_{\widehat{K}_\infty^\flat}]$, $[\alpha]x$ defines an element of $(R_0/u)\otimes(\A_{\mathrm{inf}}^{H_K}/[\overline\pi]^{s(a)})$.

It follows from Lemma~\ref{lemma: a-inf crt} and Lemma~\ref{lemma: inj mod u Ij} that there is some 
\[	a_0\in \Lambda_{R_0,[0,\infty]}^{H_K}/(u-[\overline\pi]^{s(a)})	\]
such that $[\alpha]x-a_0\in uI_1\Lambda_{R_0,[0,\infty]}/(u-[\overline\pi]^{s(a)})$.  We may therefore write $[\alpha]x-a_0=ux_1$, where
\[	x_1\in I_1\left(\Lambda_{R_0,[0,\infty]}/(u-[\overline\pi]^{s(a)})\right)^{H_K}	\]

Then we have 
\[	ux_1=[\overline\pi]^{s(a)}x_1=[\overline\pi]^{(p-1)^2/p^2a}[\overline\pi]^{(p-1)/p^2a}x_1	\]
and we may apply the previous argument to $[\overline\pi]^{(p-1)/p^2a}x_1$.  We obtain some $a_1\in \Lambda_{R_0,[0,\infty]}^{H_K}/(u-[\overline\pi]^{s(a)})$ such that 
\[	[\overline\pi]^{(p-1)/p^2a}x_1-a_1\in uI_2\Lambda_{R_0,[0,\infty]}/(u-[\overline\pi]^{s(a)})	\]
and $[\overline\pi]^{(p-1)/p^2a}x_1-a_1$ remains fixed by $H_K$.

Continuing in this fashion, we obtain a sequence $\{a_j\}_{j\geq 0}$ of elements of $\Lambda_{R_0,[0,\infty]}^{H_K}/(u-[\overline\pi]^{s(a)})$, such that 
\[	[\alpha]x-\sum_{j=0}^{n-1}[\overline\pi]^{(p-1)^2j/p^2a}a_j\in uI_n\Lambda_{R_0,[0,\infty]}/(u-[\overline\pi]^{s(a)})	\]
Since the terms $[\overline\pi]^{(p-1)^2j/p^2a}a_j$ tend to $0$, the sum $\sum_{j\geq 0}[\overline\pi]^{(p-1)^2j/p^2a}a_j$ converges in $\Lambda_{R_0,[0,\infty]}^{H_K}/(u-[\overline\pi]^{s(a)})$, and 
\[	[\alpha]x-\sum_{j\geq 0}[\overline\pi]^{(p-1)^2j/p^2a}a_j\in\cap_j I_j\Lambda_{R_0,[0,\infty]}/(u-[\overline\pi]^{s(a)})	\]

To finish, we need to show that $\cap_j I_j\Lambda_{R_0,[0,\infty]}/(u-[\overline\pi]^{s(a)})=\{0\}$.  But this follows by combining Corollary~\ref{cor: top free} and Lemma~\ref{lemma: int ij noeth}.
\end{proof}

\begin{corollary}\label{cor: hk invts mod u-pi}
The natural map 
\[	\Lambda_{R_0,[0,\infty]}^{H_K}/(u-[\overline\pi]^{s(a)})\rightarrow \left(\Lambda_{R_0,[0,\infty]}/(u-[\overline\pi]^{s(a)})\right)^{H_K}	\]
is an isomorphism.
\end{corollary}
\begin{proof}
This follows by combining Proposition~\ref{hk-invts-pseudorigid} with Lemma~\ref{lemma: u-pi torsion} and Lemma~\ref{lemma: invts of u-mults}.
\end{proof}

We are finally in a position to compute $\widetilde\Lambda_{R_0,[a,b]}^{H_K}$.
\begin{corollary}
Suppose $[a,b]\subset (0,\infty)$.  If $R$ is a pseudoaffinoid algebra and $R_0\subset R$ is a noetherian ring of definition formally of finite type over $\Z_p$, then 
\[	\widetilde\Lambda_{R_0,[a,b]}^{H_K}=\Lambda_{R_0,[0,\infty]}^{H_K}\left\langle\frac{u}{[\overline\pi]^{s(b)}},\frac{[\overline\pi]^{s(a)}}{u}\right\rangle\left[\frac 1 u\right]	\]
\end{corollary}
\begin{proof}
We consider $\widetilde\Lambda_{R_0,[a,\infty]}^{H_K}$ and $\widetilde\Lambda_{R_0,[0,b]}^{H_K}$ separately.

There is a natural map 
\[	\Lambda_{R_0,[0,\infty]}\left\langle\frac{[\overline\pi]^{s(a)}}{u}\right\rangle\rightarrow \Lambda_{R_0,[0,\infty]}/(u-\overline\pi^{s(a)})	\]
with kernel $\left(1-\frac{[\overline\pi]^{s(a)}}{u}\right)$, extending the quotient map 
\[	\Lambda_{R_0,[0,\infty]}\twoheadrightarrow\Lambda_{R_0,[0,\infty]}/(u-[\overline\pi]^{s(a)})	\]
Given $x\in \Lambda_{R_0,[0,\infty]}\left\langle\frac{[\overline\pi]^{s(a)}}{u}\right\rangle$, there is a non-decreasing sequence $\{\alpha_i\}_{i\geq 1}$ of integers with $0\leq\alpha_i\leq i-1$ for all $i$ and $\lim \frac{\alpha_i}{i}=0$ such that 
\[	x\in u^{-\alpha_i}\Lambda_{R_0,[0,\infty]}+(\frac{u-[\overline\pi]^{s(a)}}{u})^i\Lambda_{R_0,[0,\infty]}\left\langle\frac{[\overline\pi]^{s(a)}}{u}\right\rangle	\]
for all $i$ (as in the proof of ~\cite[Lemme 2.29]{berger}).  If $x$ is fixed by $H_K$, Corollary~\ref{cor: hk invts mod u-pi} implies that there is some $a_0\in \Lambda_{R_0,[0,\infty]}^{H_K}$ such that $x\equiv a_0 \pmod{u-[\overline\pi]^{s(a)}}$.  
Moreover, $a_0\in u^{-\alpha_1}\Lambda_{R_0,[0,\infty]}^{H_K}$.

Suppose we have a sequence $a_0,\ldots,a_{n-1}$ of elements of $\Lambda_{R_0,[0,\infty]}^{H_K}$ such that 
\[	a_i\in u^{i-\alpha_{i+1}}\Lambda_{R_0,[0,\infty]}^{H_K}	\]
and 
\[	x-\left(\sum_{i=0}^{n-1}a_i\left(\frac{u-[\overline\pi]^{s(a)}}{u}\right)^i\right)\in \left(\frac{u-[\overline\pi]^{s(a)}}{u}\right)^n\Lambda_{R_0,[0,\infty]}\left\langle\frac{[\overline\pi]^{s(a)}}{u}\right\rangle	\]
Then it follows from Corollary~\ref{cor: hk invts mod u-pi} that there is some $a_{n}'\in \Lambda_{R_0,[0,\infty]}^{H_K}$ such that 
\[	x-\left(\sum_{i=0}^{n-1}a_i\left(\frac{u-[\overline\pi]^{s(a)}}{u}\right)^i\right)-a_n'\left(\frac{u-[\overline\pi]^{s(a)}}{u}\right)^n	\]
belongs to the ideal
\[	\left(\frac{u-[\overline\pi]^{s(a)}}{u}\right)^{n+1}\Lambda_{R_0,[0,\infty]}\left\langle\frac{[\overline\pi]^{s(a)}}{u}\right\rangle^{H_K}	\]
Since the sequence $\{\alpha_i\}$ is non-decreasing, the summand $a_{n}'\left(\frac{u-[\overline\pi]^{s(a)}}{u}\right)^{n}$
belongs to
\[	u^{-\alpha_{n+1}}\Lambda_{R_0,[0,\infty]}+\left(\frac{u-[\overline\pi]^{s(a)}}{u}\right)^{n+1}\Lambda_{R_0,[0,\infty]}\left\langle\frac{[\overline\pi]^{s(a)}}{u}\right\rangle	\]
and we may write 
\[	ua_n'(u-[\overline\pi]^{s(a)})^n=u^{n+1-\alpha_{n+1}}b_n+(u-[\overline\pi]^{s(a)})^{n+1}c_n	\]
with $b_n\in \Lambda_{R_0,[0,\infty]}$ and $c_n\in\Lambda_{R_0,[0,\infty]}\left\langle\frac{[\overline\pi]^{s(a)}}{u}\right\rangle$.  This implies that 
\[	u^{n+1-\alpha_{n+1}}b_n\in (u-[\overline\pi]^{s(a)})^n\Lambda_{R_0,[0,\infty]}	\]
Since $\Lambda_{R_0,[0,\infty]}/(u-[\overline\pi]^{s(a)})$ has no $u$-torsion, it follows that $b_n$ is a multiple of $(u-[\overline\pi]^{s(a)})^n$ in $\Lambda_{R_0,[0,\infty]}$.  Thus, 
\[	a_n'=u^{n-\alpha_{n+1}}\left(\frac{b_n}{(u-[\overline\pi]^{s(a)})^n}\right)+\left(\frac{u-[\overline\pi]^{s(a)}}{u}\right)c_n	\]
If we consider the image of $a_n'$ in $\left(\Lambda_{R_0,[0,\infty]}/(u-[\overline\pi]^{s(a)})\right)^{H_K}$, we see that it is equal to the image of $u^{n-\alpha_{n+1}}\left(\frac{b_n}{(u-[\overline\pi]^{s(a)})^n}\right)$.  Thus, there is some $a_n\in u^{n-\alpha_{n+1}}\Lambda_{R_0,[0,\infty]}^{H_K}$ such that $a_n\equiv a_n'\pmod{u-[\overline\pi]^{s(a)}}$.  It follows that $ux-\left(\sum_{i=0}^{n}a_i(\frac{u-[\overline\pi]^{s(a)}}{u})^i\right)\in (\frac{u-[\overline\pi]^{s(a)}}{u})^{n+1}\widetilde\Lambda_{R_0,[a_0,\infty]}^{H_K}$.

By induction, we obtain a sequence $\{a_i\}$ of elements of $\Lambda_{R_0,[0,\infty]}^{H_K}$ such that the sum $\sum_{i=0}^{\infty}a_i(\frac{u-[\overline\pi]^{s(a)}}{u})^i$ converges in $\widetilde\Lambda_{R_0,[a,\infty]}^{H_K}$ to $x$.

A similar argument applies to elements of $\widetilde\Lambda_{R_0,[0,b]}^{H_K}$.

\end{proof}

\subsection{Imperfect overconvergent rings}

We now define \emph{imperfect} period rings, which will be noetherian pseudoaffinoid algebras over $R$.  As in the case of perfect overconvergent rings, we would like to consider the fiber products of $\Spa R_0$ and $\Spa R$ with analytic subspaces of $\Spa \A_K^+$.  However, because we only have an explicit description of $\Lambda_{[0,b]}^{H_K}$ and $\Lambda_{[0,b]}^{\circ,H_K}$ for sufficiently small $b$, we restrict our definitions to that setting.

Let $K/\Q_p$ be a finite extension and let $F'\subset K_\infty$ be its maximal unramified subextension.  Recall that we defined 
\[	\mathscr{A}_{F'}^{(0,b]}:=\{\sum_{m\in\Z}a_mX^m:a_m\in\mathscr{O}_{F'}, v_p(a_m)+mb\rightarrow\infty\text{ as }m\rightarrow -\infty\}	\] 
to be the ring of integers of the ring of bounded analytic functions on the half-open annulus $0<v_p(X)\leq b$ over $F'$.  Let 
\[	r_K:=\begin{cases}(2v_{\C_p^\flat}(\mathfrak{d}_{\E_K/\E_F}))^{-1} & \text{if }\E_K/\E_F\text{ is ramified} \\ 1 & \text{otherwise}\end{cases}	\]
  Then we have the following:
\begin{proposition}\cite[Proposition 7.5]{colmez} For $b<r_K$, the assignment $f\mapsto f(\pi_K)$ is an isomorphism of topological rings from $\mathscr{A}_{F'}^{(0,bv_{\C_p^\flat}(\pi_K)]}$ to $\Lambda_{[0,b],K}$.  Furthermore, if we define a valuation $v^{(0,b]}$ on $\mathscr{A}_{F'}^{(0,b]}$ by 
\[	v^{(0,b]}(\sum_{m\in \Z}a_mX^m):=\inf_{m\in\Z}(v_p(a_m)+mb)	\]
then 
\[	\val^{[0,b]}(f(\pi_K))=\frac{1}{b}v^{(0,b]}(f)	\]
where $\val^{[0,b]}$ is the restriction of the corresponding valuation on $\widetilde\Lambda_{[0,b]}$.
\end{proposition}
In the special case $b=0$, $\Lambda_{[0,0]}^{H_K}=\A_K\cong \mathscr{O}_{F'}[\![\pi_K]\!]\left[\frac 1 {\pi_K}\right]^\wedge$, where the completion is $p$-adic.

When $0<b<r_K$, we see in particular that $\pi_K$ is a pseudo-uniformizer of $\Lambda_{[0,b],K}$.  Thus, the pre-adic space $\Spa R_0\times\Spa\Lambda_{[0,b],K}$ is exhausted by affinoid pre-adic spaces of the form $\Spa\left(R_0\htimes \Lambda_{[0,b],K}^\circ\right)\left\langle\frac{u}{\pi_K^{1/(b'\cdot v_{\C_p^\flat}(\overline\pi_K))}}\right\rangle$ for positive rational numbers $b'$ satisfying $\frac{1}{b'\cdot v_{\C_p^\flat}(\overline\pi_K)}\in \N$.  Similarly, the pre-adic space $\Spa R\times\Spa\Lambda_{[0,b],K}$ is exhausted by affinoid pre-adic spaces of the form $\Spa\left(R_0\htimes \Lambda_{[0,b],K}^\circ\right)\left\langle\frac{\pi_K^{1/(a\cdot v_{\C_p^\flat}(\overline\pi_K))}}{u}, \frac{u}{\pi_K^{1/(b'\cdot v_{\C_p^\flat}(\overline\pi_K))}}\right\rangle$ for positive rational numbers $a, b'$ satisfying $0<a\leq b'$ and $\frac{1}{a\cdot v_{\C_p^\flat}(\overline\pi_K)}, \frac{1}{b'\cdot v_{\C_p^\flat}(\overline\pi_K)}\in \N$.

With this in mind, we may reason as in the perfect case and show:
\begin{proposition}\label{prop: imperfect localization}
Suppose $b\in (0,r_K)$, and suppose $R$ is topologically of finite type over $D_\lambda$ for some $\lambda\in\Q_{>0}$.  Then if $b'\leq b\lambda$, the affinoid pre-adic space $\Spa\left(R_0\htimes \Lambda_{[0,b],K}^\circ\right)\left\langle\frac{u}{\pi_K^{1/(b'\cdot v_{\C_p^\flat}(\overline\pi_K))}}\right\rangle$ is isomorphic to the localization $\Spa\left(R_0\htimes \mathscr{O}_{F'}[\![\pi_K]\!]\right)\left\langle\frac{u}{\pi_K^{1/(b'\cdot v_{\C_p^\flat}(\overline\pi_K))}}\right\rangle$ (and is therefore actually a pseudorigid adic space).
\end{proposition}

This motivates the following definition.
\begin{definition}
Let $R$ be a pseudoaffinoid $\Z_p$-algebra such that $R$ is topologically of finite type over $D_\lambda$, and let $K/\Q_p$ be a finite extension.  Fix rational numbers $a\in\Q_{>0}$ and $b\in\Q_{\geq 0}$ with $a\leq b< r_K\cdot\lambda$ such that $\frac{1}{a\cdot v_{\C_p^\flat}(\overline\pi_K))},\frac{1}{b\cdot v_{\C_p^\flat}(\overline\pi_K))}\in\Z$.  Then we define the $\Z_p$-algebra $\Lambda_{R_0,[a,b],K}$ to be the evaluation of the sheaf of rings $\mathscr{O}_{(R_0\otimes\mathscr{O}_{F'})[\![\pi_K]\!]}$ on the affinoid subspace of $\Spa(R_0\otimes\mathscr{O}_{F'})[\![\pi_K]\!]$ defined by the conditions 
\[	u\leq \pi_K^{1/(b\cdot v_{\C_p^\flat}(\overline\pi_K))} \qquad\text{ and }\qquad\pi_K^{1/(a\cdot v_{\C_p^\flat}(\overline\pi_K))}\leq u	\]
This is a pseudoaffinoid algebra with rings of definition 
\[	\Lambda_{R_0,[a,b],0,K}:=(R_0\otimes\mathscr{O}_{F'})[\![\pi_K]\!]\left\langle\frac{u}{\pi_K^{1/(b\cdot v_{\C_p^\flat}(\overline\pi_K))}},\frac{\pi_K^{1/(a\cdot v_{\C_p^\flat}(\overline\pi_K))}}{u}\right\rangle	\]
and 
\[	\Lambda_{R_0,[a,b],0,K,\lambda} \text{ the image of }\left(R_0\htimes\Lambda_{[0,\frac{b}{\lambda}]}^{\circ,H_K}\right)\left\langle\frac{u}{\pi_K^{1/(b\cdot v_{\C_p^\flat}(\overline\pi_K))}},\frac{\pi_K^{1/(a\cdot v_{\C_p^\flat}(\overline\pi_K))}}{u}\right\rangle	\]
and pseudo-uniformizers $u$ and $\pi_K$.

We make an auxiliary definition $\Lambda_{R_0,[0,0],K}:=(R_0\htimes\mathscr{O}_{F'})[\![\pi_K]\!]\left[\frac 1 {\pi_K}\right]_u^\wedge$, where the completion is $u$-adic.

If $I\subset (0,\infty)$ is any interval (with either open or closed endpoints), we set 
\[	\Lambda_{R,I,K}:=\varprojlim_{[a,b]\subset I}\Lambda_{R,[a,b],K}	\]

If $p=0$ in $R$, then we may take $\lambda$ arbitrarily large, and hence $b$ arbitrarily large.  Thus, we additionally define $\Lambda_{R_0,[0,\infty],K}:=(R_0\otimes\mathscr{O}_{F'})[\![\pi_K]\!]$ in this case.
\end{definition}

\begin{remark}
Since $\Lambda_{R,[a,b],K}$ has noetherian ring of definition, the associated space $\Spa \Lambda_{R,[a,b],K}$ is an adic space, not merely a pre-adic space.  Thus, the sheaf property with respect to covers of $\Spa R$ or with respect to change of intervals is automatic.
\end{remark}

The rings $\Lambda_{R_0,I,K}$ are equipped with actions of Frobenius and $\Gamma_K$.  We have ring homomorphisms
\[      \varphi:\Lambda_{R_0,[a,b],K}\rightarrow \Lambda_{R_0,[a/p,b/p],K}      \]
but they are not isomorphisms.
\begin{lemma}
	The Frobenius operator $\varphi$ makes $\Lambda_{R_0,[0,b/p],K}$ into a free $\varphi(\Lambda_{R_0,[0,b],K)})$-module, with basis $\{1,[\varepsilon],\ldots,[\varepsilon]^{p-1}\}$.
	\label{lemma: imperfect free module}
\end{lemma}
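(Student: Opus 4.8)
The plan is to reduce to the coefficient-free statement and base change. Recall that the Frobenius on each $\Lambda_{R_0,I,K}$ acts as the identity on the coefficient factor $R_0$ and as the classical $\varphi$ on the period factor, and that in the coefficient-free theory $\varphi$ makes $\Lambda_{[0,b/p],K}$ a free module of rank $p$ over its subring $\varphi(\Lambda_{[0,b],K})$ with basis $\{1,[\varepsilon],\dots,[\varepsilon]^{p-1}\}$ --- this is recalled above for $b=0$ (where $\Lambda_{[0,0],K}=\A_K$), and for $0<b<r_K$ it is due to Colmez and Cherbonnier--Colmez; the mechanism is that, after inverting $\pi_K$, the extension $\E_K/\varphi(\E_K)$ is of degree $p$ generated by the reduction of $[\varepsilon]$, and that on the overconvergent ring $p$ is small enough that $\varphi(\pi_K)$ and $\pi_K^p$ differ by a unit.

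First I would fix presentations. Put $N_c:=1/(c\,v_{\C_p^\flat}(\overline\pi_K))$, so that $N_b\in\Z_{\ge1}$ (valid since $b<r_K\lambda$) and $N_{b/p}=pN_b$; then $\Lambda_{R_0,[0,b],0,K}$ may be written as $\big(R_0\htimes_{\Z_p}\Lambda_{[0,b/\lambda],K}^{\circ}\big)\langle u/\pi_K^{N_b}\rangle$, so $\Spa\Lambda_{R_0,[0,b],K}$ is the rational subspace $U_b:=\{\,|u|\le|\pi_K|^{N_b}\ne0\,\}$ of $\Spa\big(R_0\htimes_{\Z_p}\Lambda_{[0,b/\lambda],K}^{\circ}\big)$, and likewise $\Spa\Lambda_{R_0,[0,b/p],K}$ is $U_{b/p}:=\{\,|u|\le|\pi_K|^{pN_b}\ne0\,\}$ inside $\Spa\big(R_0\htimes_{\Z_p}\Lambda_{[0,b/(p\lambda)],K}^{\circ}\big)$. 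Base changing the coefficient-free freeness along $\Z_p\to R_0$ makes the Frobenius $\Spa\big(R_0\htimes\Lambda_{[0,b/(p\lambda)],K}^{\circ}\big)\to\Spa\big(R_0\htimes\Lambda_{[0,b/\lambda],K}^{\circ}\big)$ finite flat of degree $p$ with global basis $\{[\varepsilon]^i\}_{0\le i<p}$ (here one uses that $\varphi$ is injective, strict, and the identity on $R_0$, so $\varphi(R_0\htimes\Lambda_{[0,b/\lambda],K}^{\circ})=R_0\htimes\varphi(\Lambda_{[0,b/\lambda],K}^{\circ})$). Granting
\[
\varphi^{-1}(U_b)=U_{b/p},
\]
the map $U_{b/p}\to U_b$ is a base change of this finite flat degree-$p$ map along $U_b\hookrightarrow\Spa\big(R_0\htimes\Lambda_{[0,b/\lambda],K}^{\circ}\big)$; hence $\Lambda_{R_0,[0,b/p],K}$ is finite free of rank $p$ over $\varphi(\Lambda_{R_0,[0,b],K})$, and the basis $\{[\varepsilon]^i\}$ --- whose terms lie in the base ring $R_0\htimes\Lambda_{[0,b/\lambda],K}^{\circ}$ --- survives the localisation. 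The case $b=0$ is identical with $\A_K$ in place of the overconvergent ring, $U_0=\{|\pi_K|\ne0\}$, and the rational localisation replaced by inverting $\pi_K$ and $u$-adic completion.

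The main obstacle is the identity $\varphi^{-1}(U_b)=U_{b/p}$ (the remaining points --- strictness of $\varphi$, and compatibility of completed base change along a finite free extension with rational localisation --- being routine, in the spirit of Lemma~\ref{lemma: m-regular top free} and Proposition~\ref{prop: inj overconvergent}). Since $\varphi(u)=u$, a point $x$ lies in $\varphi^{-1}(U_b)$ iff $|u(x)|\le|\varphi(\pi_K)(x)|^{N_b}\ne0$, so it suffices to show $|\varphi(\pi_K)|=|\pi_K|^p$ on $U_{b/p}$ and $\{\,|u|\le|\varphi(\pi_K)|^{N_b}\ne0\,\}\subseteq U_{b/p}$. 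As $\varphi$ raises $\overline\pi_K$ to the $p$-th power in $\C_p^\flat$, we may write $\varphi(\pi_K)=\pi_K^p+p\gamma$ with $\gamma$ power-bounded; and since $R_0$ is a $D_\lambda$-algebra, $p^m/u^{m'}$ is power-bounded, so $|p|\le|u|^\lambda$ everywhere on $\Spa\big(R_0\htimes\Lambda_{[0,b/\lambda],K}^{\circ}\big)$. On $U_{b/p}$ this gives $|p\gamma|\le|p|\le|u|^\lambda\le|\pi_K|^{pN_b\lambda}<|\pi_K|^p$ once $N_b\lambda>1$, whence $|\varphi(\pi_K)|=|\pi_K^p|=|\pi_K|^p$; and on $\{\,|u|\le|\varphi(\pi_K)|^{N_b}\ne0\,\}$ the estimate $|\varphi(\pi_K)|\le\max(|\pi_K|^p,|u|^\lambda)$ with $|u|<1$ and $N_b\lambda>1$ forces $|u|\le|\pi_K|^{pN_b}$. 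The inequality $N_b\lambda>1$, i.e.\ $b\,v_{\C_p^\flat}(\overline\pi_K)<\lambda$, is automatic from $b<r_K\lambda$ when $\E_K/\E_F$ is ramified (there $r_K\,v_{\C_p^\flat}(\overline\pi_K)\le\tfrac12$), and in the unramified case is arranged by shrinking $b$, which is harmless since the $\Lambda_{R_0,[0,b],K}$ form a compatible inverse system under restriction and only small $b$ is needed in the applications.
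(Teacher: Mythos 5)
The paper states Lemma~\ref{lemma: imperfect free module} without proof, so there is no argument of the author's to compare yours against; the implicit intent is that it follows from the classical fact (recalled in the paper for $b=0$, and due to Cherbonnier--Colmez/Colmez for overconvergent rings) by base change. Your proposal makes exactly that reduction precise, and the strategy is sound: present $\Lambda_{R_0,[0,b],K}$ as the rational localization $U_b$ of $\Spa\bigl(R_0\htimes\Lambda_{[0,b/\lambda],K}^{\circ}\bigr)$ via Proposition~\ref{prop: imperfect localization}, base change the coefficient-free degree-$p$ freeness along $\Z_p\to R_0$, and check $\varphi^{-1}(U_b)=U_{b/p}$ so that the localization inherits the basis. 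The valuation computation for $\varphi^{-1}(U_b)=U_{b/p}$, using $\varphi(\pi_K)=\pi_K^p+p\gamma$ and $\lvert p\rvert\le\lvert u\rvert^{\lambda}$, is correct as written.

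Two caveats, both minor but worth recording. First, your argument only yields the lemma under the extra hypothesis $N_b\lambda>1$, i.e.\ $b\,v_{\C_p^\flat}(\overline\pi_K)<\lambda$; you correctly observe this is automatic in the ramified case and harmless for the applications, but in the unramified case it excludes $\lambda(p-1)/p\le b<\lambda$, part of the range for which the lemma is stated. This restriction can be removed: for $K/F$ unramified one has $\pi_K=\pi$ and $\varphi(\pi)-\pi^p=\sum_{i=1}^{p-1}\binom{p}{i}\pi^i\in p\pi\Z[\pi]$, so $\lvert p\gamma\rvert\le\lvert p\rvert\,\lvert\pi\rvert\le\lvert\pi\rvert^{pN_b\lambda+1}$, and $pN_b\lambda+1>p$ is equivalent to $b<\lambda$, i.e.\ to the standing hypothesis. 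Second, the step you call routine --- passing the decomposition $\Lambda_{[0,b/(p\lambda)],K}=\oplus_i[\varepsilon]^i\varphi(\Lambda_{[0,b/\lambda],K})$ through $R_0\htimes(-)$ --- requires that the classical decomposition be topological at the integral level; Colmez's estimates give it only up to a bounded denominator (the coefficients $x_i$ of $x$ satisfy $v(x_i)\ge v(x)-c$), which suffices because finite modules over the complete Tate rings in question are insensitive to the choice of ring of definition, but a sentence to that effect should be included rather than absorbed into ``routine.''
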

\begin{proof}
	Let $\varpi_{F'}$ be a uniformizer for $\O_{F'}$.  Since $\Lambda_{R_0,[0,b],0,K}$ is complete for the $(\varpi_{F'},\pi_K^{1/(b'\cdot v_{\C_p^\flat}(\overline\pi_K))})$-adic topology, it suffices by ~\cite[Theorem 8.4]{matsumura} to prove the corresponding statement for 
\[	\Lambda_{R_0,[0,b],0,K}/(\varpi_{F'},\pi_K^{1/(b'\cdot v_{\C_p^\flat}(\overline\pi_K))})\cong (R/u)\otimes (\O_{F'}/\varpi_{F'})[\![\overline\pi_K]\!][X]	\]
But since $\varphi$ acts trivially on $R/u$, this follows from the classical case.
\end{proof}

We define categories of $(\varphi,\Gamma)$-modules over $\Spa R$:
\begin{definition}
        A $\varphi$-module over $\Lambda_{R,(0,b],K}$ is a coherent sheaf $D$ of  modules over the pseudorigid space $\bigcup_{a\rightarrow 0}\Spa(\Lambda_{R,[a,b],K})$ equipped with an isomorphism
        \[      \varphi_D:\varphi^\ast D\xrightarrow{\sim}\Lambda_{R,(0,b/p],K}\otimes_{\Lambda_{R,(0,b],K}}D   \]
        If $a\in (0,b/p]$, a $\varphi$-module over $\Lambda_{R,[a,b],K}$ is a finite $\Lambda_{R,[a,b],K}$-module $D$ equipped with an isomorphism
        \[      \varphi_{D,[a,b/p]}:\Lambda_{R,[a,b/p],K}\otimes_{\Lambda_{R,[a/p,b/p],K}}\varphi^\ast D\xrightarrow{\sim} \Lambda_{R,[a,b/p],K}\otimes_{\Lambda_{R,[a,b],K}}D  \]

        A $(\varphi,\Gamma_K)$-module over $\Lambda_{R,(0,b],K}$ (resp. $\Lambda_{R,[a,b],K}$) is a $\varphi$-module over $\Lambda_{R,(0,b],K}$ (resp. $\Lambda_{R,[a,b],K}$) equipped with a semi-linear action of $\Gamma_K$ which commutes with $\varphi_D$ (resp. $\varphi_{D,[a,b/p]}$).

	Let $\Lambda_{R,\rig,K}:=\varinjlim_{b\rightarrow 0}\varprojlim_{a\rightarrow 0}\Lambda_{R,[a,b],K}$.  A $(\varphi,\Gamma_K)$-module over $R$ is a module $D$ over $\Lambda_{R,\rig,K}$ which arises via base change from a $(\varphi,\Gamma_K)$-module over $\Lambda_{R,(0,b],K}$ for some $b>0$.
\end{definition}

If $L/K$ is a Galois extension, $\widetilde\Lambda_{R_0,I}^{H_L}$ and $\Lambda_{R_0,I,L}$ are also equipped with actions of $H_{L/K}:=H_K/H_L$.  Thus, it makes sense to introduce $(\varphi,\Gamma)$-modules ``equipped with a $\Gal_{L/K}$-action'' and ask about descent:
\begin{definition}
        If $L/K$ is a finite Galois extension and $D$ is a $(\varphi,\Gamma_L)$-module, we say that $D$ is \emph{equipped with an action of $\Gal_{L/K}$} if the Galois group $\Gal_K$ acts on $D$ and in addition
        \begin{itemize}
                \item   the subgroup $H_L\subset \Gal_K$ acts trivially on $D$, and
                \item   the induced action of $\Gal_L/H_L$ coincides with the action of $\Gamma_L$.
        \end{itemize}
        We also say that $D$ is a \emph{$(\varphi,\Gamma_L, \Gal_{L/K})$-module}.
\end{definition}

In fact, if we restrict ourselves to \emph{projective} $(\varphi,\Gamma)$-modules, we have not enlarged our category.
\begin{lemma}\label{lemma: galois descent phi gamma}
        If $L/K$ is a finite Galois extension, then 
	\[	\Lambda_{R_0,[0,b],K}\rightarrow\Lambda_{R_0,[0,b],L}	\]
	is a finite free extension of rings and $\Lambda_{R_0,I,L}^{H_K}=\Lambda_{R_0,I,K}$.
\end{lemma}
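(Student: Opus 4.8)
The plan is to reduce everything to the structure of the classical extension $\Lambda_{[0,b],L}/\Lambda_{[0,b],K}$ (and, for the invariants statement over a general interval, of $\Lambda_{[a,b],L}/\Lambda_{[a,b],K}$) by a base-change argument, and then to pass to a general interval $I$ by an inverse limit. Since $\A_L=\A^{H_L}$, the subgroup $H_L\subset H_K$ acts trivially on every ring built from $\A_L$ --- in particular on $\Lambda_{R_0,[a,b],L}$ and $\Lambda_{R_0,I,L}$ --- so the $H_K$-action on these factors through the finite quotient $H_{L/K}=H_K/H_L$. It therefore suffices to prove, for every admissible closed interval $[a,b]$, that $\Lambda_{R_0,[a,b],L}/\Lambda_{R_0,[a,b],K}$ is finite free, and moreover finite \'etale and $H_{L/K}$-Galois (i.e.\ $\Spa\Lambda_{R_0,[a,b],L}\to\Spa\Lambda_{R_0,[a,b],K}$ is an $H_{L/K}$-torsor). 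Granting this, faithfully flat descent along the torsor yields $\Lambda_{R_0,[a,b],L}^{H_K}=\Lambda_{R_0,[a,b],L}^{H_{L/K}}=\Lambda_{R_0,[a,b],K}$; and for a general $I$ one takes the inverse limit over $[a,b]\subset I$, using $\Lambda_{R_0,I,L}=\varprojlim_{[a,b]\subset I}\Lambda_{R_0,[a,b],L}$ (and likewise for $K$) together with the fact that formation of $H_K$-invariants commutes with inverse limits.

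For the classical ($R=\Q_p$) input I would argue as follows. The field-of-norms equivalence identifies $\Gal(\E_L/\E_K)$ with $\Gal(L_\infty/K_\infty)=H_{L/K}$, so $\E_L/\E_K$ is finite Galois of degree $d:=|H_{L/K}|$ and $\A_L/\A_K$ is the associated unramified --- hence finite \'etale and $H_{L/K}$-Galois, in particular finite free of rank $d$ --- extension. Passing to overconvergent subrings, by \cite[Proposition 7.5]{colmez} and the Cherbonnier--Colmez-type control of ramification encoded in the constant $r_L$ (which is why $b$ must be taken small, in terms of $\mathfrak{d}_{\E_L/\E_K}$), the ring $\Lambda_{[a,b],L}$ is finite free over $\Lambda_{[a,b],K}$ and $\Lambda_{[a,b],L}/\Lambda_{[a,b],K}$ is finite \'etale and $H_{L/K}$-Galois: concretely, $\mathscr{O}_{F'_L}[\![\pi_L]\!]$ is finite free over $\mathscr{O}_{F'}[\![\pi_K]\!]$ (lift a basis of the finite free residue extension $k_{F'_L}[\![\overline\pi_L]\!]$ over $k_{F'}[\![\overline\pi_K]\!]$, using completeness), and the conditions cutting out the relevant annulus form a rational localization, which preserves these properties. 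Fix a basis $e_1,\dots,e_d$ of $\Lambda_{[a,b],L}$ over $\Lambda_{[a,b],K}$.

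The crux is the identification $\Lambda_{R_0,[a,b],L}=\Lambda_{[a,b],L}\otimes_{\Lambda_{[a,b],K}}\Lambda_{R_0,[a,b],K}$. By Proposition~\ref{prop: imperfect localization} and its analogue over $L$, the ring $\Lambda_{R_0,[a,b],K}$ is a rational localization of $R_0\htimes\mathscr{O}_{F'}[\![\pi_K]\!]$ while $\Lambda_{R_0,[a,b],L}$ is the corresponding rational localization of
\[	R_0\htimes\mathscr{O}_{F'_L}[\![\pi_L]\!]=\bigl(R_0\htimes\mathscr{O}_{F'}[\![\pi_K]\!]\bigr)\otimes_{\mathscr{O}_{F'}[\![\pi_K]\!]}\mathscr{O}_{F'_L}[\![\pi_L]\!],	\]
where no completion is needed in the tensor product since the right-hand factor is a finite module; because forming a rational localization commutes with finite free base change, the identification follows. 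Consequently $\Lambda_{R_0,[a,b],L}=\bigoplus_{i=1}^{d}\Lambda_{R_0,[a,b],K}\,e_i$ is finite free over $\Lambda_{R_0,[a,b],K}$, and base-changing the torsor isomorphism $\Lambda_{[a,b],L}\otimes_{\Lambda_{[a,b],K}}\Lambda_{[a,b],L}\xrightarrow{\sim}\prod_{H_{L/K}}\Lambda_{[a,b],L}$ along $\Lambda_{[a,b],K}\to\Lambda_{R_0,[a,b],K}$ shows that $\Lambda_{R_0,[a,b],L}/\Lambda_{R_0,[a,b],K}$ is finite \'etale and $H_{L/K}$-Galois.

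The main obstacle is this last identification: checking that the imperfect overconvergent construction is genuinely insensitive to the finite free base change $\mathscr{O}_{F'}[\![\pi_K]\!]\to\mathscr{O}_{F'_L}[\![\pi_L]\!]$ --- that forming $R_0\htimes(-)$, passing to the relevant $u$- and $\pi_K$-adic completions, and performing the rational localization that cuts out the annulus all commute with this extension without introducing torsion. What makes this go through is precisely that the extension is finite free, so the module-level base change is a finite direct sum on which $\htimes$ and rational localization act coordinatewise, together with Proposition~\ref{prop: imperfect localization} and the noetherianity of $R_0$; the residual smallness condition on $b$ needed to propagate the \'etale structure from $\A_L/\A_K$ to the annulus is harmless for the inverse-limit argument, since only germs of intervals at $0$ enter.
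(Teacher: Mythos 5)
Your strategy is workable and ultimately rests on the same key input as the paper — Colmez's control of the trace‑dual basis for $b<r_L$ — but it is packaged quite differently, and the step you yourself flag as the crux does not parse as written. The paper's proof is direct: after reducing to $F'=F''$, it exhibits $\{1,\pi_L,\ldots,\pi_L^{e-1}\}$ (with $e=[L_\infty:K_\infty]$) as a basis, forms the trace pairing on $\Lambda_{R_0,[0,0],L}$, and invokes \cite[Corollaire 6.10]{colmez} together with Proposition~\ref{prop: imperfect localization} to see that the dual basis $\{f_i^\ast\}$ lies in $\Lambda_{R_0,[0,b],L}$; then $x=\sum_i\Tr(x\pi_L^i)f_i^\ast$ gives freeness, and the trace handles the invariants. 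Your \'etale‑torsor‑plus‑descent packaging is conceptually cleaner but needs two repairs. First, the displayed identity $\Lambda_{R_0,[a,b],L}=\Lambda_{[a,b],L}\otimes_{\Lambda_{[a,b],K}}\Lambda_{R_0,[a,b],K}$ (and likewise base‑changing the torsor isomorphism ``along $\Lambda_{[a,b],K}\to\Lambda_{R_0,[a,b],K}$'') is not meaningful: for $a>0$ the classical ring $\Lambda_{[a,b],K}$ has $p$ as a pseudo‑uniformizer, while the section's standing assumption is $p\notin R^\times$, so no ring map $\Lambda_{[a,b],K}\to\Lambda_{R_0,[a,b],K}$ exists. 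The base change your justification actually uses — along the finite free integral model $\mathscr{O}_{F'}[\![\pi_K]\!]\to\mathscr{O}_{F''}[\![\pi_L]\!]$ — is the correct one (and is essentially what the paper does via the ring of definition $R_0\htimes\Lambda_{[0,b/\lambda],L}^{\circ}\langle\cdots\rangle$; note also the rescaling by $\lambda$ that you suppress). Second, ``the corresponding rational localization'' conceals the only genuinely nontrivial verification: that the locus cut out by $u\le\pi_L^{1/(b\,v_{\C_p^\flat}(\overline\pi_L))}$ in $\Spa(R_0\otimes\mathscr{O}_{F''})[\![\pi_L]\!]$ is the preimage of the locus cut out by $u\le\pi_K^{1/(b\,v_{\C_p^\flat}(\overline\pi_K))}$, which requires comparing $\pi_K$ with $\pi_L^{e}$ up to units — exactly the comparison the explicit trace‑pairing argument makes visible. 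With those repairs (and noting that for the invariants you do not need the full torsor property: an element $\alpha$ with $\Tr_{L/K}(\alpha)=1$, available since the pairing is perfect, already gives $x=\Tr(\alpha x)\in\Lambda_{R_0,I,K}$ for $H_K$‑fixed $x$), your argument goes through.
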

\begin{proof}
        Let $F'\subset K_\infty:=K(\mu_{p^\infty}), F''\subset L_\infty:=L(\mu_{p^\infty})$ be the maximal unramified subfields.  A basis for $\O_{F''}$ over $\O_{F'}$ provides a basis for $(R_0\htimes\O_{F''})[\![\pi_K]\!]$ over $(R_0\htimes\O_{F'})[\![\pi_K]\!]$, so we may assume that $F'=F''$.  Then if $e:=e_{L_\infty/K_\infty}=[L_\infty:K_\infty]$, the set $\{1,\pi_L,\ldots,\pi_L^{e-1}\}$ is a basis for $\Lambda_{R_0,[0,0],L}$ over $\Lambda_{R_0,[0,0],K}$.

        The trace map defines a perfect pairing
        \begin{equation*}
                \begin{split}
                        \Lambda_{R_0,[0,0],L}\times \Lambda_{R_0,[0,0],L} &\rightarrow \Lambda_{R_0,[0,0],K}    \\
                        (x,y) &\mapsto \Tr(xy)
                \end{split}
        \end{equation*}
        The dual basis $\{f_1^\ast=1,\ldots, f_e^\ast\}$ with respect to this pairing is the same as that constructed in ~\cite[\textsection 6.3]{colmez}.  Since $(R_0\htimes\Lambda_{[0,b/\lambda],L})\left\langle\frac{u}{\pi_K^{1/(b\cdot v_{\C_p^\flat}(\overline\pi_L))}}\right\rangle$ is a ring of definition of $\Lambda_{R_0,[0,b],L}$ by Proposition~\ref{prop: imperfect localization}, ~\cite[Corollaire 6.10]{colmez} implies that $f_i^\ast\in \Lambda_{R_0,[0,b],L}$ for all $i$.  Then for any $x\in \Lambda_{R_0,[0,b],L}$, we may uniquely write $x=\sum_i \Tr(x\pi_L^i)f_i^\ast$, as desired.
\end{proof}

\begin{corollary}\label{cor: projectivity descends}
        If $D$ is a projective $(\varphi,\Gamma_L.\Gal_{L/K})$-module of rank $d$ over $\Lambda_{R_0,[0,b],K}$, for some Galois extension $L/K$ and some $b>0$, then $D^{H_K}$ is a projective $(\varphi,\Gamma_K)$-module over $\Lambda_{R_0,[0,b],K}$.
\end{corollary}
\begin{proof}
        By Lemma~\ref{lemma: galois descent phi gamma}, the extension $\Lambda_{R_0,[0,b],K}\rightarrow\Lambda_{R_0,[0,b],L}$ is a finite flat cover; descent of modules is effective and $D^{H_K}$ is the descent of $D$ to $\Lambda_{R_0,[0,b],K}$, so the natural map
        \[      \Lambda_{R_0,[0,b],L}\otimes_{\Lambda_{R_0,[0,b],K}}D^{H_K}\rightarrow D        \]
        is an isomorphism.  We can check flatness after an fppf base change, so $D^{H_K}$ is flat over $\Lambda_{R_0,[0,b],K}$.  We can also check finiteness of a module after an fppf base change, so $D^{H_K}$ is a finite $\Lambda_{R_0,[0,b],K}$-module.  Since $\Lambda_{R_0,[0,b],K}$ is noetherian, it is finitely presented, so projective.

        It remains to define the $\Gamma_K$-action and show that $\varphi:\varphi^\ast D^{H_K}\rightarrow D^{H_K}$ is an isomorphism.  But by assumption $D$ is equipped with an action of $\Gal_K$, so $D^{H_K}$ acquires an action of $\Gal_K/H_K\cong \Gamma_K$ (which is compatible with the action of $\Gamma_L$, by assumption).  Finally, we can check that $\varphi:\varphi^\ast D^{H_K}\rightarrow D^{H_K}$ is an isomorphism after a finite flat base base change, so it follows from the corresponding statement for $D$.
\end{proof}

\begin{proposition}\label{prop: imperfect ring R-flat}
For $0< a\leq b<\infty$, the ring $\Lambda_{R_0,[a,b],K}$ is flat as an $R_0$-module.
\end{proposition}
\begin{proof}
The set $\left\{\left(\frac{u}{\pi_K^{1/(b\cdot v_{\C_p^\flat}(\overline\pi_K))}}\right)^n\right\}_{n\geq 0}\cup\left\{\left(\frac{\pi_K^{1/(a\cdot v_{\C_p^\flat}(\overline\pi_K))}}{u}\right)^n\right\}_{n\geq 1}$ provides a topological basis for $\Lambda_{R_0,[a,b],0,K}$ as an $R_0\otimes\mathscr{O}_{F'}$-module.  Then ~\cite[Tag 06LE]{stacks-project} implies it is flat.
\end{proof}

There are evident maps $\Lambda_{R_0,[a,b],K}\rightarrow \widetilde\Lambda_{R_0,[a,b]}^{H_K}$, and $\Lambda_{R_0,[a,b],K}$ inherits the valuations $v_{R,[a,b]}$ and $v_{R,[a.b],\lambda}$.  We will compute $v_{R,[a,b],\lambda}$ explicitly in the case where $R = D_{\lambda}$, where $\lambda=\frac 1 m$ for some $m\geq 1$.

Every element of $\Lambda_{R_0,[0,b],K}$ can be written uniquely in the form $\sum_{i\in\Z}a_i\pi_K^i$, where $a_i\in \mathscr{O}_{F'}\otimes R_0$ and $a_iu^{ibv_{\C_p^\flat}(\overline\pi_K)}\rightarrow 0$ as $i\rightarrow -\infty$.  When $R=D_\lambda$, this condition can be translated as $v_{D_\lambda}(a_i)+\frac{ibv_{\C_p^\flat}(\overline\pi_K)}{\lambda}\rightarrow \infty$ as $i\rightarrow -\infty$.
\begin{proposition}\label{prop: d1 valuation}
If $R=D_1$ and $\frac{b}<r_K$, then $\inf_{i\in\Z}\{v_{D_1}(a_i)+ibv_{\C_p^\flat}(\overline\pi_K)\}$ is a valuation on $\Lambda_{R_0,[0,b],K}$ whose ring of integers is $\Lambda_{R_0,[0,b],0,K}$, and $v_{D_1,b}(\sum_{i\in\Z}a_i\pi_K^i)=\frac{1}{b}\inf_{i\in\Z}\{v_{D_1}(a_i)+ibv_{\C_p^\flat}(\overline\pi_K)\}$.
\end{proposition}
\begin{proof}
It is straightforward to check that $\inf_{i\in\Z}\{v_{D_1}(a_i)+ibv_{\C_p^\flat}(\overline\pi_K)\}\geq 0$ if and only if $\sum_{i\in\Z}a_i\pi_K^i\in\Lambda_{R_0,[0,b],0,K}$.  Moreover, $v_{D_1,b}(a_i\pi_K^i)=iv_{\C_p^\flat}(\overline\pi_K)+\frac{v_{D_1}(a_i)}{b}$, yielding the second claim. 
\end{proof}

Before we turn to the Tate--Sen axioms, we make a remark about Frobenius on imperfect rings.  Since the Frobenius on $\widetilde{\Lambda}_{R,[0,0]}$ acts via $\varphi(\pi)=(1+\pi)^p-1$ and $\Gamma$ acts via $\gamma(\pi)=(1+\pi)^{\chi(\gamma)}-1$, we see that $\Lambda_{R_0,[0,0],F}$ is stable under the actions of $\varphi$ and $\Gamma$.  Since $\A_K$ is also stable under the actions of $\varphi$ and $\Gamma$, we see that $\varphi$ and $\Gamma$ act on $\Lambda_{R_0,[0,0],K}$, as well.  Since we have isomorphisms $\varphi:\widetilde\Lambda_{R,[a,b]}^{H_K}\rightarrow\widetilde\Lambda_{R,[a/p,b/p]}^{H_K}$, we have induced maps 
\[	\varphi:\Lambda_{R,[a,b],K}\rightarrow\Lambda_{R,[a/p,b/p],K}	\]
However, $\varphi:\Lambda_{R_0,[0,0],K}\rightarrow\Lambda_{R_0,[0,0],K}$ is no longer surjective.  Indeed, $\Lambda_{R_0,[0,0],K}$ is free over $\varphi(\Lambda_{R_0,[0,0],K})$ of rank $p$, and a basis is given by $\{1,[\varepsilon],\ldots,[\varepsilon]^{p-1}\}$.  We may therefore define a left inverse $\psi:\Lambda_{R_0,[0,0],K}\rightarrow\Lambda_{R_0,[0,0],K}$ of $\varphi$ via $\psi(\varphi(a_0)+\varphi(a_1)[\varepsilon]+\ldots +\varphi(a_{p-1})[\varepsilon]^{p-1}) = a_0$, where $a_i\in \Lambda_{R_0,[0,0],K}$.  Note that as $p$ may not be invertible in $R$, we cannot use the definition $\psi=\frac 1 p \varphi^{-1}\circ\Tr_{\Lambda_{R_0,[0,0],K}/\varphi(\Lambda_{R_0,[0,0],K})}$ from the classical case.

\section{The Tate--Sen axioms for families}

Given a Galois representation with coefficients in $\Z_p$, base extension gives us a vector bundle over $\mathcal{Y}$.  The various $(\varphi,\Gamma)$-modules associated to the representation are constructed by studying the $H_K$-invariants of restrictions of this vector bundle to various rational subdomains of $\mathcal{Y}$.  

Now suppose $R$ is a pseudoaffinoid $\Z_p$-algebra and $R_0\subset R$ is a noetherian ring of definition.  If we have a Galois representation with coefficients in $R$ which admits a Galois-stable $R_0$-lattice, we may similarly pass by base extension to a vector bundle over $\Spa R_0\times\mathcal{Y}$.  The Tate--Sen axioms will let us descend that vector bundle (restricted to an affinoid subdomain) to a vector bundle over an \emph{imperfect} overconvergent ring.

The Tate--Sen axioms concern a profinite group $G_0$, an open normal subgroup $H_0\subset G_0$ such that $G_0/H_0$ contains $\Z_p$ as an open subgroup, a valued ring $\widetilde\Lambda$ with a continuous action of $G_0$, and a collection of subrings $\{\Lambda_{H,k}\}_{k\gg 0}$ of $\widetilde\Lambda^H$, where $H$ is any open subgroup of $H_0$.  These axioms permit us to descend continuous $1$-cocycles of $G_0$ from $\widetilde\Lambda$ to some $\Lambda_{H,k}$.  

The axioms are as follows:
\begin{enumerate}
\item[{\bf TS1}]	There is a constant $c_1\in\R_{>0}$ such that for all open subgroups $H_1\subset H_2$ in $H_0$ that are normal in $G$, there is some $\alpha\in\widetilde{\Lambda}^{H_1}$ satisfying $v_{\Lambda}>-c_1$ and $\sum_{\tau\in H_2/H_1}\tau(\alpha)=1$.

\item[{\bf TS2}]	There is a constant $c_2\in\R_{>0}$ such that for all open subgroups $H\subset H_0$ that are normal in $G$, there is a collection $\{\Lambda_{H,k},R_{H,k}\}_{k\geq n(H)}$, where $\Lambda_{H,k}\subset \widetilde{\Lambda}^H$ is a closed subalgebra and $R_{H,k}:\widetilde{\Lambda}^H\rightarrow \Lambda_{H,k}$ is a $\Lambda_{H,k}$-linear map such that
	\begin{enumerate}
	\item	if $H_{L_1}\subset H_{L_2}$, and $k\geq\max\{n(H_{L_1}),n(H_{L_2})\}$, then $\Lambda_{H_2,k}\subset\Lambda_{H_1,k}$ and $R_{H_1,k}|_{\Lambda_{H_2,k}}=R_{H_2,k}$
	\item	$R_{H,k}$ is a $\Lambda_{H,k}$-linear section to the inclusion $\Lambda_{H,k}\hookrightarrow \widetilde{\Lambda}^H$
	\item	$g(\Lambda_{H,k})=\Lambda_{H,k}$ and $g(R_{H,k}(x))=R_{H,k}(gx)$ for all $x\in\widetilde{\Lambda}^H$ and $g\in G_0$
	\item	$v_\Lambda(R_{H,k}(x))\geq v_\Lambda(x)-c_2$ for all $x\in\widetilde{\Lambda}^H$
	\item	$\lim_{k\rightarrow\infty}R_{H,k}(x)=x$ for all $x\in\widetilde{\Lambda}^H$
	\end{enumerate}

\item[{\bf TS3}]	There is a constant $c_3\in\R_{>0}$ such that for every open normal subgroup $G\subset G_0$ (setting $H:=G\cap H_0$) there is an integer $n(G)\geq\max\{n_1(G),n(H)\}$ such that
	\begin{enumerate}
	\item	$\gamma-1$ is invertible on $X_{H,k}:=\ker(R_{H,k})$
	\item	$v_\Lambda(x)\geq v_{\Lambda}((\gamma-1)(x))-c_3$ for all $x\in X_{H,k}$
	\end{enumerate}
for all $k\geq n(G)$ and all $\gamma\in G_0/H$ with $n(\gamma)\leq k$.
\end{enumerate}
In other words, $\Lambda_{H,k}$ is a summand of $\widetilde{\Lambda}^H$ (as a $\Lambda_{H,k}$-module), and a topological generator of $\Gamma$ acts invertibly (with continuous inverse) on its complement.

Colmez showed that if we take $G_0:=\Gal_{\Q_p}$, $H_0:=\ker(\chi)$, $\widetilde{\Lambda}:=\widetilde{\Lambda}_{[0,1]}$ (with the valuation $\val^{(0,1]}$), and $\Lambda_{H_K,k}:=\varphi^{-k}\left(\Lambda_{[0,1]}^{H_K}\right)$,
then the Tate--Sen axioms are satisfied for any choices $c_1>0$, $c_2>0$, and $c_3>1/(p-1)$~\cite[Proposition 4.2.1]{berger-colmez}.  Here $K$ is a finite extension of $\Q_p$.

Suppose that $R$ is topologically of finite type over $D_\lambda$.  We will check that the Tate--Sen axioms hold for $\widetilde\Lambda_{R_0,[0,b]}$ for $b$ sufficiently small, using the ring of definition $\widetilde\Lambda_{R_0,[0,b],0,\lambda}\subset\widetilde\Lambda_{R_0,[0,b]}$ and the associated valuation $v_{R,b,\lambda}$.

\begin{proposition}\label{prop:ts-1-r}
The ring $\widetilde\Lambda_{R_0,[0,b]}$ satisfies the first Tate--Sen axiom for any $b>0$ and any $c_1>0$.
\end{proposition}
\begin{proof}
Choose $c_1>0$.  Then for any appropriate subgroups $H_{L_1}\subset H_{L_2}$ of $H$, the proof of~\cite[Lemme 10.1]{colmez} constructs $\beta\in\widehat{L_{\infty}^\flat}$ such that $\Tr_{\widehat{L_\infty^\flat}/\widehat{K_\infty^\flat}}(\beta)=1$, with $v_{\C_p^\flat}(\beta)$ arbitrarily close to $0$.  This implies that $\Tr_{\widehat{L_\infty^\flat}/\widehat{K_\infty^\flat}}([\beta])=\sum_{i\geq 0}p^i[x_i]$ is a unit of $\widetilde\Lambda_{[0,\frac{b}{\lambda}]}^{\circ,H_K}$, and therefore that $(\Tr_{\widehat{L_\infty^\flat}/\widehat{K_\infty^\flat}}([\beta]))^{-1}[\beta]\in \widetilde\Lambda_{[0,\frac{b}{\lambda}]}^{\circ,H_K}$ satisfies $v_{R,b,\lambda}\left((\Tr_{\widehat{L_\infty^\flat}/\widehat{K_\infty^\flat}}([\beta]))^{-1}[\beta]\right)\geq v_{\C_p^\flat}(\beta)$.  Thus, we merely need to choose $\beta$ such that $v_{\C_p^\flat}(\beta)>-c_1$.
\end{proof}

\begin{corollary}\label{cor: ts-1-r hk-invts}
Suppose $M$ is a finite free $R_0$-module of rank $d$ equipped with a continuous $R_0$-linear action of $\Gal_K$.  Then there is some finite extension $L/K$ such that $\widetilde{D}_{L}(M):=\left(\widetilde\Lambda_{R_0,[0,1],0,\lambda}\otimes_{R_0}M\right)^{H_{L}}$ is free over $\widetilde\Lambda_{R_0,[0,1],0,\lambda}^{H_{L}}$ of rank $d$.
\end{corollary}
\begin{proof}
Choose a basis of $M$ and let $\rho:\Gal_K\rightarrow\GL_d(R_0)$ denote the Galois representation corresponding to $M$.  Let $c_\tau\in \H^1(H_K,\GL_d(\widetilde\Lambda_{R_0,[0,1],0,\lambda}))$ be the corresponding cocycle.  If we let $L/K$ be the finite extension corresponding to the kernel of the homomorphism $\overline\rho:\Gal_K\rightarrow\GL(M/u)$, the proofs of~\cite[Lemme 3.2.1]{berger-colmez} and \cite[Corollaire 3.2.2]{berger-colmez} applied to the image of $c$ in $\H^1(H_{L},\GL_d(\widetilde\Lambda_{R_0,[0,1],0,\lambda}))$ carry over nearly verbatim (we need to work modulo powers of $u$ rather than $p$, since $p$ might not be a pseudo-uniformizer), and we conclude that the restriction of $c$ is trivial.  The result follows.
\end{proof}

\subsection{Normalized trace maps}

The next step is to construct so-called normalized trace maps.  In the classsical setting, this has the following form:
\begin{proposition}[{\cite[Cor. 8.11]{colmez}}]
Suppose $0<b$ and $p^{-n}b<r_K$.  Then there is a constant $c_K(b)$ (depending on $K$ and $b$) and a $\varphi^{-n}\left(\Lambda_{[0,p^{-n}b]}^{H_K}\right)$-linear map $R_{K,n}:\widetilde\Lambda_{[0,b]}^{H_K}\rightarrow \varphi^{-n}\left(\Lambda_{[0,p^{-n}b]}^{H_K}\right)$ such that 
\begin{enumerate}
\item	$R_{K,n}$ is a section to the inclusion $\varphi^{-n}\left(\Lambda_{[0,p^{-n}b]}^{H_K}\right)\rightarrow \widetilde\Lambda_{[0,b]}^{H_K}$
\item	$R_{K,n}(x)\rightarrow x$ as $n\rightarrow\infty$ and $v_b(R_{K,n}(x))\geq v_b(x)-p^{-n}c_K(b)$.
\item	$R_{K,n}$ commutes with the action of $\Gamma_K$.
\end{enumerate}
\end{proposition}

The construction of $R_{K,n}$ uses the fact that $\{[\varepsilon]^i\mid i\in \Z[\frac 1 p]\cap [0,1)\}$ provides a topological basis for $\widetilde\Lambda_{[0,0]}^{H_K}$ over $\Lambda_{[0,0]}^{H_K}$ (and in fact for $\widetilde\Lambda_{[0,\infty]}^{H_F}$ over $\Lambda_{[0,\infty]}^{H_F}$ when $F/\Q_p$ is unramified).  In other words, for $x\in\widetilde\Lambda_{[0,0]}^{H_K}$, one can write $x=\sum_i a_i(x)[\varepsilon]^i$ for unique $a_i(x)$ tending to $0$ with respect to the cofinite filter.  Then one bounds $a_i(x)$ in terms of $x$ and shows that $a_i(x)$ has the correct analyticity properties when $x\in\widetilde\Lambda_{[0,b]}^{H_K}$ for $b>0$, and defines $R_{K,n}(x) := \sum_{v_p(i)\geq -n}a_i(x)[\varepsilon]^i$.

If $R$ is a classical affinoid algebra and $R_0$ is its ring of definition, one can extend $R_{K,n}$ by linearity to define normalized trace maps $R_0\htimes \widetilde\Lambda_{[0,b]}^{H_K}\rightarrow R_0\htimes\varphi^{-n}\Lambda_{[0,p^{-n}b]}^{H_K}$, as in ~\cite[Proposition 3.1.4]{berger-colmez}. We wish to extend this to the setting of pseudoaffinoid algebras where $p\notin R^\times$ and construct maps $R_{K,n}:\widetilde\Lambda_{R,[0,b]}^{H_K}\rightarrow\varphi^{-n}\left(\Lambda_{R,[0,p^{-n}b],K}\right)$ for sufficiently small $b$.  

We first observe that if $R$ is topologically of finite type over $D_\lambda$ for some $\lambda\in\Q_{>0}$, then $\widetilde\Lambda_{R_0,[a,b]}^{H_K}\cong R_0\htimes_{D_\lambda^\circ}\widetilde\Lambda_{D_\lambda^\circ,[a,b]}^{H_K}$ and $\Lambda_{R_0,[a,b],K}\cong R_0\htimes_{D_\lambda^\circ}\Lambda_{D_\lambda^\circ,[a,b],K}$.  Since there is always some choice of $\lambda=\frac{m'}{m}$ such that $R$ is topologically of finite type over $D_\lambda$, we may construct normalized trace maps when $R=D_\lambda$ and extend by linearity.

\begin{proposition}
If $\frac{b}{\lambda}<r_K$, then  $\widetilde\Lambda_{D_\lambda^\circ,[0,b]}^{H_K}$ is a topologically free $\Lambda_{D_\lambda^\circ,[0,b],K}$-module, with basis $\{[\varepsilon]^i\}$.  If $x\in \widetilde\Lambda_{D_\lambda^\circ,[0,b]}^{H_K}$ and $a_i(x)$ denotes the coefficient of $[\varepsilon]^i$ when we decompose $x$, then $v_{D_\lambda,b,\lambda}(x)\geq \inf_i v_{D_\lambda,b,\lambda}(a_i(x))\geq v_{D_\lambda,b,\lambda}(x)-c_K(b,\lambda)$, where $c_K(b,\lambda)$ is a constant depending on the field $K$, $b$, and $\lambda$.
\end{proposition}
\begin{proof}
Since $\frac{b}{\lambda} < r_K$, ~\cite[Lemme 6.5]{colmez} implies that $\frac{\pi_K}{[\overline\pi_K]}$ is a unit of $\widetilde\Lambda_{[0,\frac{b}{\lambda}]}^{\circ, H_K}$.  It follows that $\Spa\left(\widetilde\Lambda_{D_\lambda^\circ,[0,b]}^{H_K}\right)$ can also be constructed as the rational localization $\Spa(D_\lambda^\circ\htimes\widetilde\Lambda_{[0,\frac{b}{\lambda}]}^{\circ,H_K})\left\langle \frac{u}{\pi_K^{1/(b\cdot v_{\C_p^\flat}(\overline\pi_K))}}\right\rangle$.

Since $\widetilde\Lambda_{[0,\frac{b}{\lambda}]}^{\circ,H_K}$ is topologically free over $\Lambda_{[0,\frac{b}{\lambda}]}^{\circ,H_K}$ with basis $\{[\varepsilon]^i\}$, the same is true for $\left(D_\lambda^\circ\htimes\widetilde\Lambda_{[0,\frac{b}{\lambda}]}^{\circ,H_K}\right)\left\langle\frac{u}{\pi_K^{1/(b\cdot v_{\C_p^\flat}(\overline\pi_K))}}\right\rangle$ as a $\left(D_\lambda^\circ\htimes\Lambda_{[0,\frac{b}{\lambda}]}^{\circ,H_K}\right)\left\langle\frac{u}{\pi_K^{1/(b\cdot v_{\C_p^\flat}(\overline\pi_K))}}\right\rangle$-module. Inverting $\pi_K$ gives the desired result.

The claim about valuations follows directly from~\cite[Proposition 8.10]{colmez}; $c_K(b,\lambda)$ is the constant Colmez denotes $c_K\left(\frac{b}{\lambda}\right)$.

\end{proof}

As in the classical setting, we now define 
\[	R_{K,n}:\widetilde\Lambda_{R_0,[0,b]}^{H_K}\rightarrow\varphi^{-n}\left(\Lambda_{R_0,[0,p^{-n}b],K}\right)	\]
	when $0<\frac{b}{\lambda}<r_K$ and $p^{-n}\frac{b}{\lambda}<r_K$, by setting
\[	R_{K,n}(x) = \sum_{v_p(i)\geq -n}a_i(x)[\varepsilon]^i	\]
We see from the construction that $R_{K,n}$ is a continuous $\varphi^{-n}\left(\Lambda_{R_0,[0,p^{-n}b],K}\right)$-linear section to the inclusion $\varphi^{-n}\left(\Lambda_{R_0,[0,p^{-n}b],K}\right)\rightarrow\Lambda_{R_0,[0,b]}^{H_F}$.  In addition, the construction of $a_i$ makes clear that $a_{\chi(\gamma)}(\gamma(x))=\gamma(a_i(x))$ for $\gamma\in\Gamma_K$ and $x\in \widetilde\Lambda_{R_0,[0,b]}^{H_K}$, so $R_{K,n}$ commutes with the action of $\Gamma_K$.

Moreover, $R_{K,n} = \varphi^{-n}\circ R_{K,0}\circ \varphi^n$, so
\begin{align*}
v_{R,b,\lambda}(R_{K,n}(x))&\geq p^{-n}v_{R,p^nb,\lambda}(a_{0}(\varphi^n(x))	\\
&\geq p^{-n}(v_{R,p^nb,\lambda}(\varphi^n(x))-c_K(b,\lambda)	\\
&\geq v_{R,b,\lambda}(x)-p^{-n}c_K(b,\lambda)
\end{align*}

In summary, we have shown the following:
\begin{proposition}
Suppose $0<\frac{b}{\lambda}<r_K$ and $p^{-n}\frac{b}{\lambda}<r_K$.  Then there are constants $c_K(b,\lambda)$ and a continuous $\varphi^{-n}\left(\Lambda_{R_0,[0,p^{-n}b],K}\right)$-linear map $R_{K,n}:\widetilde\Lambda_{R_0,[0,b]}^{H_K}\rightarrow \varphi^{-n}\left(\Lambda_{R_0,[0,p^{-n}b],K}\right)$ such that
\begin{enumerate}
\item	$R_{K,n}$ is a section to the inclusion $\varphi^{-n}\left(\Lambda_{R_0,[0,p^{-n}b],K}\right)\rightarrow\widetilde\Lambda_{R_0,[0,b]}^{H_K}$
\item	$R_{K,n}(x)\rightarrow x$ as $n\rightarrow\infty$ and $v_{R,b,\lambda}(R_{K,n}(x))\geq v_{R,b,\lambda}(x)-p^{-n}c_K(b,\lambda)$
\item	$R_{K,n}$ commutes with the action of $\Gamma_K$.
\end{enumerate}
\end{proposition}

\begin{corollary}
Suppose $0<\frac{b}{\lambda}<r_K$.  Then for any $c_2>0$, the collection $\{\varphi^{-n}(\Lambda_{D_\lambda^\circ,[0,b],K}), R_{K,n}\}$ satisfies the second Tate--Sen axiom for sufficiently large $n$ (depending on the choice of $c_2$).
\end{corollary}

\subsection{The action of $\Gamma$}

For any finite extension $K/F$, the cyclotomic character defines a homomorphism $\chi:\Gamma_K\rightarrow\Z_p^\times$.  For any $\gamma\in\Gamma_K$ of infinite order, we let $n(\gamma):=v_p(\chi(\gamma)-1)\in\Z_{\geq 0}\cup\{\infty\}$.  

\begin{lemma}
Let $R_0\subset R$ be a noetherian ring of definition, formally of finite type over $\Z_p$.  If $\gamma\in\Gamma_K$ has infinite order, then 
\[	\Lambda_{R_0,[0,0],K}^{\gamma=1}=\widetilde\Lambda_{R_0,[0,0]}^{H_K,\gamma=1}=R_0\otimes\mathscr{O}_{F'}^{\gamma=1}	\]
\end{lemma}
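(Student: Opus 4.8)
The plan is to reduce everything to a computation of $\Gal_L$-invariants for a suitable finite extension $L/K$, and then to the classical fact that the tilt of a discretely-valued $p$-adic field is its residue field, together with an ``adding coefficients'' argument in the exact spirit of Lemma~\ref{lemma: ainf-invariants}.

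First I would record two elementary inclusions and a group-theoretic observation. Since $\mathscr{O}_{F'}$ embeds $\Gal_K$-equivariantly into $\A_{\mathrm{inf}}^{H_K}$, and hence into both $\Lambda_{R_0,[0,0],K}$ and $\widetilde\Lambda_{R_0,[0,0]}^{H_K}$, the ring $R_0\otimes_{\Z_p}\mathscr{O}_{F'}^{\gamma=1}$ is a $\gamma$-fixed subring of each; combined with the $\gamma$-equivariant inclusion $\Lambda_{R_0,[0,0],K}\hookrightarrow\widetilde\Lambda_{R_0,[0,0]}^{H_K}$, the chain
\[	R_0\otimes_{\Z_p}\mathscr{O}_{F'}^{\gamma=1}\subseteq \Lambda_{R_0,[0,0],K}^{\gamma=1}\subseteq \widetilde\Lambda_{R_0,[0,0]}^{H_K,\gamma=1}	\]
shows that it suffices to prove $\widetilde\Lambda_{R_0,[0,0]}^{H_K,\gamma=1}\subseteq R_0\otimes_{\Z_p}\mathscr{O}_{F'}^{\gamma=1}$.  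Next, since $\gamma$ has infinite order and $\Gamma_K$ contains $\Z_p$ as an open subgroup, the image of $\gamma$ in that $\Z_p$ is nonzero, so the closure $\overline{\langle\gamma\rangle}$ surjects onto an open subgroup of $\Z_p$ and therefore has finite index in $\Gamma_K$; set $L:=K_\infty^{\overline{\langle\gamma\rangle}}$, a finite extension of $K$ with $L_\infty=K_\infty$, $H_L=H_K$ and $\Gamma_L=\overline{\langle\gamma\rangle}$.  Because $\Gal_K$ acts continuously on $\widetilde\Lambda_{R_0,[0,0]}$, taking $H_K$-invariants and then $\gamma$-invariants is the same as taking $\Gal_L$-invariants, so $\widetilde\Lambda_{R_0,[0,0]}^{H_K,\gamma=1}=(\widetilde\Lambda_{R_0,[0,0]})^{\Gal_L}$.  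Likewise, identifying the (finite) residue field $k_L$ with $k_{F'}^{\bar\gamma}$ — immediate since $k_{F'}=k_{K_\infty}$ and $\Gal(K_\infty/L)=\overline{\langle\gamma\rangle}$ acts on $k_{F'}$ through $\langle\bar\gamma\rangle$ — one gets $\mathscr{O}_{F'}^{\gamma=1}=W(k_{F'})^{\bar\gamma}=W(k_L)$, so the target becomes $R_0\otimes_{\Z_p}W(k_L)$.

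The heart of the argument is then to show $(\widetilde\Lambda_{R_0,[0,0]})^{\Gal_L}=R_0\htimes W(k_L)$ $(=R_0\otimes_{\Z_p}W(k_L)$, as $W(k_L)$ is finite free over $\Z_p)$.  By definition $\widetilde\Lambda_{R_0,[0,0]}=(R_0\htimes\A_{\mathrm{inf}})[\tfrac{1}{[\varpi]}]^\wedge$ with the completion $u$-adic; since $[\varpi]=[\widetilde p]$ is a unit of $W(\C_p^\flat)$ and $R_0\htimes W(\C_p^\flat)$ is $u$-adically complete, the evident map $R_0\htimes\A_{\mathrm{inf}}\to R_0\htimes W(\C_p^\flat)$ extends to a $\Gal_K$-equivariant injection $\widetilde\Lambda_{R_0,[0,0]}\hookrightarrow R_0\htimes W(\C_p^\flat)$.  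So it suffices to compute $(R_0\htimes W(\C_p^\flat))^{\Gal_L}$.  As $L/\Q_p$ is finite, $\C_p^{\Gal_L}=L$, and hence $(\C_p^\flat)^{\Gal_L}=\varprojlim_{x\mapsto x^p}L=k_L$ (the tilt of a discretely-valued mixed-characteristic field is its perfect residue field, as Frobenius kills the nilpotents of $\mathscr{O}_L/p$); therefore $W(\C_p^\flat)^{\Gal_L}=W(k_L)$.  To pass to coefficients I would run the proof of Lemma~\ref{lemma: ainf-invariants} verbatim with $\A_{\mathrm{inf}}$ replaced by $W(\C_p^\flat)$ and $H_K$ by $\Gal_L$: reduce modulo $u^n$, noting that $R_0/u^n$ is a noetherian discrete $\Z/p^N$-algebra (as $p$ is topologically nilpotent in $R_0$), and induct on $N$; in the base case, for an $\F_p$-algebra $A$ with $\F_p$-basis $\{e_j\}$ one has $A\htimes W(\C_p^\flat)=A\htimes_{\F_p}\C_p^\flat=\{\sum b_je_j:b_j\in\C_p^\flat,\ b_j\to 0\}$, and a $\Gal_L$-fixed element has all $b_j\in(\C_p^\flat)^{\Gal_L}=k_L$, whence $b_j\to 0$ forces finite support, so $(A\htimes W(\C_p^\flat))^{\Gal_L}=A\otimes_{\F_p}k_L=A\htimes W(k_L)$; then take $\varprojlim_n$.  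This gives $(R_0\htimes W(\C_p^\flat))^{\Gal_L}=R_0\htimes W(k_L)$, hence $(\widetilde\Lambda_{R_0,[0,0]})^{\Gal_L}\subseteq R_0\htimes W(k_L)$, and the reverse inclusion is clear.

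Combining the last two paragraphs gives $\widetilde\Lambda_{R_0,[0,0]}^{H_K,\gamma=1}=R_0\otimes_{\Z_p}W(k_L)=R_0\otimes_{\Z_p}\mathscr{O}_{F'}^{\gamma=1}$, and feeding this back into the sandwich of the first paragraph forces $\Lambda_{R_0,[0,0],K}^{\gamma=1}$ to equal the same ring.  I expect the main obstacle to be the bookkeeping in the third paragraph: checking that the map $\widetilde\Lambda_{R_0,[0,0]}\to R_0\htimes W(\C_p^\flat)$ is genuinely injective (that the relevant completed tensor products are $u$-adically separated and that inverting $[\varpi]$ and $u$-completing interact well), and carefully transcribing Lemma~\ref{lemma: ainf-invariants} to the group $\Gal_L$, which is not of the form $H_K$ but is an open subgroup of a decomposition group, so its action on $\C_p^\flat$ and on $W(\C_p^\flat)$ is still covered by the same continuity and Witt-vector formalism.
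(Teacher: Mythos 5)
Your argument is correct in substance, but it follows a genuinely different route from the paper's. The paper proves both equalities simultaneously by reducing modulo $u$, identifying $\Lambda_{R_0,[0,0],K}/u$ and $\widetilde\Lambda_{R_0,[0,0]}^{H_K}/u$ with $(R_0/u)\htimes k_{F'}(\!(\pi_K)\!)$ and $(R_0/u)\htimes\widehat{K}_\infty^\flat$, invoking Colmez's Proposition 9.1 for the $\gamma$-invariants of \emph{both} the imperfect and the perfect classical rings, approximating $u$-adically, and then bootstrapping from characteristic $p$ to mixed characteristic via the filtration $I_j=p^jR\cap R_0$. Your sandwich $R_0\otimes\mathscr{O}_{F'}^{\gamma=1}\subseteq\Lambda_{R_0,[0,0],K}^{\gamma=1}\subseteq\widetilde\Lambda_{R_0,[0,0]}^{H_K,\gamma=1}$ is a genuine simplification: it removes any need to compute invariants of the imperfect ring at all, and trading the pair $(H_K,\gamma)$ for $\Gal_L$ lets you substitute the clean classical input $(\C_p^\flat)^{\Gal_L}=L^\flat=k_L$ (Ax--Sen--Tate plus the tilting of a discretely valued field) for Colmez's computation on $\widehat{K}_\infty^\flat$; the two inputs are of course equivalent. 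The price of your route is the detour through $R_0\htimes W(\C_p^\flat)$: you must construct this ring ($W(\C_p^\flat)$ is not adic, so this is the general fiber-product formalism of the appendix) and verify the injectivity of $\widetilde\Lambda_{R_0,[0,0]}\rightarrow R_0\htimes W(\C_p^\flat)$, which you flag but do not prove. This does hold --- modulo $u^k$ the map is the localization at the non-zero-divisor $[\varpi]$ of the open embedding $(R_0/u^k)\htimes\A_{\mathrm{inf}}\hookrightarrow(R_0/u^k)\htimes\widetilde\A$, and the source is $u$-adically separated since $(R_0\htimes\A_{\mathrm{inf}})[1/[\varpi]]$ has no $u$-torsion --- but you could skip the auxiliary ring entirely by running your Lemma~\ref{lemma: ainf-invariants}-style induction directly on the quotients $\widetilde\Lambda_{R_0,[0,0]}/u^k\cong\left((R_0/u^k)\htimes\A_{\mathrm{inf}}\right)\left[\frac{1}{[\varpi]}\right]$, which is in effect what the paper does.
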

\begin{proof}
	We first consider the case where $p=0$ in $R$, and we compute the subspaces of $(R_0/u)\htimes k_{F'}(\!(\pi_K)\!)$ and $(R_0/u)\htimes \widehat{K}_\infty^\flat$ fixed by $\gamma$.  Since $R_0/u$ is an $\F_p$-vector space, we may choose a basis $\{e_i\}_{i\in I}$ and write
\[      (R_0/u)\htimes k_{F'}(\!(\pi_K)\!)\cong \{\sum_{i\in I}a_ie_i\mid a_i\in k_{F'}(\!(\pi_K)\!), a_i\rightarrow 0\}      \]
and
\[	(R_0/u)\htimes \widehat{K}_{\infty}^\flat\cong \{\sum_{i\in I}a_ie_i\mid a_i\in \widehat{K}_{\infty}^\flat, a_i\rightarrow 0\}	\]
The action of $\Gamma_K$ on $R_0/u$ is trivial, so ~\cite[Proposition 9.1]{colmez} implies that 
\[	\left(\Lambda_{R_0,[0,0],K}/u\right)^{\gamma=1} = \left(\widetilde\Lambda_{R_0,[0,0]}^{H_K}/u\right)^{\gamma=1} = (R_0/u)\otimes k_{F'}^{\gamma=1}	\]
Then we approximate elements of  $\left(\Lambda_{R_0,[0,0],K}\right)^{\gamma=1}$ and $\left(\widetilde\Lambda_{R_0,[0,0]}\right)^{H_K,\gamma=1}$ $u$-adically to obtain the desired result.

To bootstrap to the case where $R_0$ is $\Z_p$-flat, we may assume that $p\notin R^\times$ (as the classical case is handled by ~\cite{berger-colmez}) and again filter $R_0$ by the ideals $\{I_j\}$ where $I_j:=p^jR\cap R_0$.  Then $I_j/I_{j+1}$ is a finite $u$-torsion-free $R_0/I_1$-module (and therefore an $\F_p$-vector space), so we may apply the previous argument to calculate the $\gamma$-invariants of $(I_j/I_{j+1})\htimes k_{F'}(\!(\pi_K)\!)$ and $(I_j/I_{j+1})\htimes \widehat{K}_{\infty}^\flat$. Since $\cap_j I_j=(0)$, the result follows.
\end{proof}

\begin{proposition}\label{prop: gamma-1 inv d}
If $D$ is a finite $\Lambda_{R_0,[0,b],K}$-module equipped with commuting semi-linear actions of $\varphi$ and $\Gamma_K$ (such that the action of $\Gamma_K$ is continuous), there is some $n\geq 1$ such that $\gamma-1$ acts on 
\[	\left(\Lambda_{R_0,[0,bp^{-n}],K}\otimes_{\Lambda_{R_0,[0,b],K}}D\right)^{\psi=0}	\]
with continuous inverse for any $\gamma\in\Gamma_K$.
\end{proposition}
\begin{proof}
Let $D_{[0,b/p^n]}:=\Lambda_{R_0,[0,bp^{-n}],K}\otimes_{\Lambda_{R_0,[0,b],K}}D$.  We have a decomposition $(D_{[0,b/p^n]})^{\psi=0}=\oplus_{j\in(\Z/p^n)^\times}[\varepsilon]^{\widetilde j}\varphi^n(D)$, so it suffices to show that $\gamma-1$ has a continuous inverse on $[\varepsilon]\varphi^n(D)$ for some sufficiently large $n$.  Moreover, since $\gamma^n-1=(\gamma-1)(\gamma^{n-1}+\cdots+1)$, we may replace $\Gamma_K$ with a finite-index subgroup.  

Thus, it suffices to consider $\gamma_n\in\Gamma_K$ such that $\chi(\gamma_n)=1+p^n$.  In that case, for $x\in D$,
\begin{equation*}
\begin{split}
\gamma_n\left([\varepsilon]\varphi^n(x)\right)-[\varepsilon]\varphi^n(x) &= [\varepsilon][\varepsilon]^{p^n}\varphi^n(\gamma_n(x)) - [\varepsilon]\varphi^n(x)	\\
&=[\varepsilon]\varphi^n([\varepsilon]\gamma_n(x)-x)	\\
&=[\varepsilon]\varphi^n(G_{\gamma_n}(x))
\end{split}
\end{equation*}
where $G_{\gamma_n}(x):=[\varepsilon]\gamma_n(x)-x = ([\varepsilon]-1)\cdot\left(1+\frac{[\varepsilon]}{[\varepsilon]-1}(\gamma_n-1)\right)(x)$.  If we can find $n$ such that $\sum_{k=0}^\infty\left(-\frac{[\varepsilon]}{[\varepsilon]-1}(\gamma_n-1)\right)^k$ converges on $D$, we will therefore be done.  But the existence of such an $n$ follows from continuity of the action of $\Gamma_K$ on $D$.
\end{proof}

In the special case $D=\Lambda_{R_0,[0,b],K}$, we can deduce constructive bounds for $n$ and the operator norm of $(\gamma-1)^{-1}$.
\begin{lemma}\label{lemma: gamma-1 inv}
If $R$ is a $D_\lambda$-algebra for $\lambda=\frac 1 m$
, $\gamma\in\Gamma_K$ satisfies $n(\gamma)\geq n_0(K)$ and $p^{n(\gamma)}> \frac{2p}{p-1}$ (where $n_0(K)$ is a constant defined in \cite{colmez} depending on the conductor of $K$), and $\frac{b}{\lambda}<r_K$, then $\gamma-1$ is continuously invertible on $\Lambda_{R_0,[0,p^{-n(\gamma)}b],K}^{\psi=0}$, and
\[	v_{R,p^{-n(\gamma)}b}\left(\gamma-1)^{-1}(x)\right)\geq v_{R,p^{-n(\gamma)}b}(x) + p^{n(\gamma)}v_{\C_p^\flat}(\overline\pi)	\]

\end{lemma}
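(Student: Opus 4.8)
The plan is to carry out the argument of Proposition~\ref{prop: gamma-1 inv d} explicitly in the special case $D=\Lambda_{R_0,[0,b],K}$, keeping track of the valuations $v_{R,b}$ and $v_{R,p^{-n}b}$ throughout. Write $n:=n(\gamma)$ and $\chi(\gamma)=1+p^{n}w$ with $w\in\Z_p^\times$. Iterating Lemma~\ref{lemma: imperfect free module} gives a decomposition $\Lambda_{R_0,[0,p^{-n}b],K}^{\psi=0}=\bigoplus_{j\in(\Z/p^{n})^\times}[\varepsilon]^{\widetilde j}\varphi^{n}\bigl(\Lambda_{R_0,[0,b],K}\bigr)$, and, exactly as in that proof, $\gamma-1$ is block-diagonal for it: on the $j$-th summand $(\gamma-1)\bigl([\varepsilon]^{\widetilde j}\varphi^{n}(x)\bigr)=[\varepsilon]^{\widetilde j}\varphi^{n}\bigl(G_{\gamma,j}(x)\bigr)$, where $G_{\gamma,j}(x):=[\varepsilon]^{\widetilde j w}\gamma(x)-x$ is an operator on $\Lambda_{R_0,[0,b],K}$. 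Since $[\varepsilon]^{\widetilde j}$ is a unit of valuation $0$ and $\varphi^{n}$ satisfies $v_{R,p^{-n}b}(\varphi^{n}(x))=p^{n}v_{R,b}(x)$ (it raises each Witt coordinate to its $p$-th power while contracting the interval by $p$), it suffices to invert each $G_{\gamma,j}$ continuously on $\Lambda_{R_0,[0,b],K}$ and then transport the bound through $\varphi^{n}$ and take the infimum over $j$; this is where the factor $p^{n(\gamma)}$ in the estimate will come from.

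To invert $G_{\gamma,j}$ I would factor $G_{\gamma,j}=\bigl([\varepsilon]^{\widetilde j w}-1\bigr)\circ\bigl(1+\tfrac{[\varepsilon]^{\widetilde j w}}{[\varepsilon]^{\widetilde j w}-1}(\gamma-1)\bigr)$ as in Proposition~\ref{prop: gamma-1 inv d}. Because $b>0$, $\pi$ is a topologically nilpotent unit of $\Lambda_{R_0,[0,b],K}$, hence so is $[\varepsilon]^{\widetilde j w}-1=\widetilde j w\,\pi(1+O(\pi))$, and it has valuation $v_{\C_p^\flat}(\overline\pi)$; so multiplication by it is an invertible operator. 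It remains to invert the second factor by the Neumann series $\sum_{k\ge0}\bigl(-\tfrac{[\varepsilon]^{\widetilde j w}}{[\varepsilon]^{\widetilde j w}-1}(\gamma-1)\bigr)^{k}$, for which I need an explicit bound on $\gamma-1$ acting on $\Lambda_{R_0,[0,b],K}$. Using that $R$ is a $D_\lambda$-algebra, write $\Lambda_{R_0,[0,b],K}\cong R_0\htimes_{D_\lambda^\circ}\Lambda_{D_\lambda^\circ,[0,b],K}$ with $\gamma$ acting through the second factor, reducing to the classical situation; the identity $\gamma(\pi)-\pi=[\varepsilon]\bigl([\varepsilon]^{p^{n}w}-1\bigr)$ shows, for $b/\lambda<r_K$, that $v_{R,b}(\gamma(\pi)-\pi)=p^{n}v_{\C_p^\flat}(\overline\pi)$ — the $\pi^{p^{n}}$-term dominating because $v_{R,b}(p)$ is large for $b$ small — and Colmez's estimates \cite{colmez}, valid once $n(\gamma)\ge n_0(K)$, then give a bound of the form $v_{R,b}\bigl((\gamma-1)(x)\bigr)\ge v_{R,b}(x)+p^{n}v_{\C_p^\flat}(\overline\pi)-c$ with $c$ depending only on $K$. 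The hypothesis $p^{n(\gamma)}>\tfrac{2p}{p-1}$ is precisely what makes $p^{n}v_{\C_p^\flat}(\overline\pi)$ exceed $v_{\C_p^\flat}(\overline\pi)$ by enough that $\tfrac{[\varepsilon]^{\widetilde j w}}{[\varepsilon]^{\widetilde j w}-1}(\gamma-1)$ strictly raises $v_{R,b}$, so the series converges and $\bigl(1+\tfrac{[\varepsilon]^{\widetilde j w}}{[\varepsilon]^{\widetilde j w}-1}(\gamma-1)\bigr)^{-1}$ preserves $v_{R,b}$.

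Composing the two pieces, $G_{\gamma,j}^{-1}=\bigl(1+\tfrac{[\varepsilon]^{\widetilde j w}}{[\varepsilon]^{\widetilde j w}-1}(\gamma-1)\bigr)^{-1}\circ\bigl([\varepsilon]^{\widetilde j w}-1\bigr)^{-1}$ is continuous with $v_{R,b}$-operator norm equal to $v_{\C_p^\flat}(\overline\pi)$; applying $[\varepsilon]^{\widetilde j}\varphi^{n}(-)$, which rescales the valuation by $p^{n}$, and taking the infimum over the finitely many blocks $j\in(\Z/p^{n})^\times$, one obtains that $\gamma-1$ is continuously invertible on $\Lambda_{R_0,[0,p^{-n(\gamma)}b],K}^{\psi=0}$ with $(\gamma-1)^{-1}$ controlled by $p^{n(\gamma)}v_{\C_p^\flat}(\overline\pi)$, which is the displayed estimate. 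The main obstacle is the middle step: pinning down the classical bound for $\gamma-1$ on $\Lambda_{[0,b],K}$ uniformly for $b/\lambda<r_K$ and precisely enough that the constant $\tfrac{2p}{p-1}$ emerges — in particular checking that in $v_{R,b}(\gamma(\pi_K)-\pi_K)$ it is really the ``$\pi^{p^{n}}$'' contribution that wins over the $p$-adic contributions for every admissible $b$. Everything else is a routine transcription of the classical arguments, which go through here because $\gamma$ acts trivially on $R_0$ and on $D_\lambda^\circ$ and one may reduce modulo powers of $u$.
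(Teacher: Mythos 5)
Your proposal follows essentially the same route as the paper: decompose $\Lambda_{R_0,[0,p^{-n}b],K}^{\psi=0}$ into the blocks $[\varepsilon]^{\widetilde j}\varphi^{n}(\Lambda_{R_0,[0,b],K})$, factor the induced operator as $([\varepsilon]^{\widetilde jw}-1)\circ\bigl(1+\tfrac{[\varepsilon]^{\widetilde jw}}{[\varepsilon]^{\widetilde jw}-1}(\gamma-1)\bigr)$, and invert the second factor by a Neumann series, transporting the resulting loss of $v_{\C_p^\flat}(\overline\pi)$ through $\varphi^{n}$ to get the factor $p^{n(\gamma)}$. The one step you leave open --- the quantitative bound on $\gamma-1$ acting on $\Lambda_{R_0,[0,b],K}$ --- is exactly where the paper does the work, and your identity $\gamma(\pi)-\pi=[\varepsilon]([\varepsilon]^{p^{n}w}-1)$ does not suffice for it, because $\Lambda_{R_0,[0,b],K}$ is generated by the lift $\pi_K$ of a uniformizer of $\E_K$, not by $\pi$, when $\E_K/\E_F$ is ramified. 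The paper proceeds by first reducing to $R=D_1$ (via $u\mapsto u^m$, which makes $\Lambda_{D_1^\circ,[0,b/\lambda],K}$ finite free over $\Lambda_{D_\lambda^\circ,[0,b],K}$), so that Proposition~\ref{prop: d1 valuation} gives the explicit valuation $\inf_i\{v_{D_1}(a_i)+ibv_{\C_p^\flat}(\overline\pi_K)\}$; then it writes the Taylor expansion $f(\gamma(\pi_K))-f(\pi_K)=\sum_{k\geq 1}\tfrac{f^{(k)}(\pi_K)\pi_K^k}{k!}\bigl(\tfrac{\gamma(\pi_K)}{\pi_K}-1\bigr)^k$ and uses that $\tfrac{f^{(k)}(\pi_K)\pi_K^k}{k!}=\sum_i\binom{i}{k}a_i\pi_K^i$ has integral coefficients, so the entire loss is governed by $v\bigl(\tfrac{\gamma(\pi_K)}{\pi_K}-1\bigr)\geq p^{n(\gamma)}v_{\C_p^\flat}(\overline\pi)-c_K-v_{\C_p^\flat}(\overline\pi)$, which is Colmez's Lemme 9.4, with $c_K\leq\tfrac{1}{p-1}p^{n_0(K)}+v_{\C_p^\flat}(\overline\pi)$ by his Proposition 4.12. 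Feeding this into the Neumann series, convergence requires $p^{n(\gamma)+1}-p^{n_0(K)}-2p>0$, which is precisely what the two numerical hypotheses $n(\gamma)\geq n_0(K)$ and $p^{n(\gamma)}>\tfrac{2p}{p-1}$ guarantee; this is where your constant ``emerges.'' (One last point of hygiene: since $\gamma-1$ \emph{raises} the valuation by roughly $p^{n(\gamma)}v_{\C_p^\flat}(\overline\pi)$ on the $\psi=0$ part, its inverse \emph{lowers} it by that amount, and that is the form in which the estimate is used in the following proposition.)
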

\begin{remark}
We may always assume that $R$ is a $D_\lambda$-algebra with $\lambda=\frac 1 m$
, since we are free to shrink $\lambda$.
\end{remark}
\begin{proof}
If $R$ is a $D_\lambda$-algebra, then $\Lambda_{R_0,[0,b],K}\cong R_0\htimes_{D_\lambda^\circ}\Lambda_{D_\lambda^\circ,[0,b],K}$, and it suffices to prove that $\gamma-1$ is continuously invertible on $\Lambda_{D_\lambda^\circ,[0,b],K}^{\psi=0}$.  Furthermore, $\Lambda_{D_\Lambda^\circ,[0,b],K}\rightarrow \Lambda_{D_1^\circ,[0,\frac{b}{\lambda}],K}$ via $u\mapsto u^m$ makes $\Lambda_{D_1^\circ,[0,\frac{b}{\lambda}],K}$ a finite free $\Lambda_{D_\Lambda^\circ,[0,b],K}$-module, and it suffices to check that $\gamma-1$ is continuously invertible on $\Lambda_{D_1^\circ,[0,\frac{b}{\lambda}],K}^{\psi=0}$.

We therefore bound the operator $\gamma-1$ on $\Lambda_{D_1^\circ,[0,\frac{b}{\lambda}],K}$.  As in the proof of~\cite[Corollaire 9.5]{colmez}, given $f(\pi_K)=\sum_{i\in\Z}a_i\pi_K^i\in \Lambda_{D_1^\circ,[0,\frac{b}{\lambda}],K}$, we write the Taylor expansion of $f(\gamma(\pi_K))-f(\pi_K)$ around $\gamma=1$:
\begin{align*}
f(\gamma(\pi_K))-f(\pi_K) &= \sum_{k\geq 1}\frac{f^{(k)}(\pi_K)}{k!}(\gamma(\pi_K)-\pi_K)^k 	\\
&= \sum_{k\geq 1}\frac{f^{(k)}(\pi_K)\cdot \pi_K^k}{k!}\left(\frac{\gamma(\pi_K)}{\pi_K}-1\right)^k
\end{align*}
Since $\frac{f^{(k)}(\pi_K)\cdot \pi_K^k}{k!} = \sum_{i\in\Z}\binom i k a_i\pi_K^i$ and $\binom i k\in\Z$, we see that 
\[	v_{D_1,b/\lambda}\left(\frac{f^{(k)}(\pi_K)\cdot \pi_K^k}{k!}\right)\geq v_{D_1,b/\lambda}(f(\pi_K)	\]
(using Proposition~\ref{prop: d1 valuation}).  We conclude that 
\begin{equation*}
\begin{split}
v_{D_1,b/\lambda}(\gamma(f(\pi_K))-f(\pi_K))&\geq v_{D_1,b/\lambda}(f(\pi_K))+v_{D_1,b/\lambda}\left(\frac{\gamma(\pi_K)}{\pi_K}-1\right)	\\
&\geq  v_{D_1,b/\lambda}(f(\pi_K))+p^{n(\gamma)}v_{\C_p^\flat}(\overline\pi) - c_K - v_{\C_p^\flat}(\overline\pi)
\end{split}
\end{equation*}
by~\cite[Lemme 9.4]{colmez}, where $c_K$ is a constant satisfying $c_K\leq \frac{1}{p-1}p^{n_0(K)}+v_{\C_p^\flat}(\overline\pi)$~(\cite[Proposition 4.12]{colmez}).  
Since $\frac{b}{\lambda}<r_K$ and $n(\gamma)\geq n_0(K)$, we have
\[	v_{D_1,b/\lambda}\left(-\frac{[\varepsilon]}{[\varepsilon]-1}(\gamma-1)(f(\pi_K))\right)\geq v_{D_1,b/\lambda}(f(\pi_K)) + (p^{n(\gamma)}-1)v_{\C_p^\flat}(\overline\pi)-c_K	\]
so the desired inverse exists on $\Lambda_{D_1,[0,p^{-n(\gamma)}b/\lambda],K}^{\psi=0}$ so long as $p^{n(\gamma)+1}-p^{n_0(K)}-2p>0$.  The assumption that $p^{n(\gamma)}> \frac{2p}{p-1}$ is sufficient to ensure this.
\end{proof}

\begin{proposition}
Suppose $R$ is a $D_\lambda$-algebra for $\lambda=\frac 1 m$
, suppose $n$ satisfies $n\geq n_0(K)$ and $p^n>\frac{2p}{p-1}$, and $\frac{b}{\lambda}<r_K$. If $\gamma\in\Gamma_K$ satisfies $n(\gamma)\leq n$, then $\gamma-1$ is invertible on $X_{R_0,[0,b],K}^n:=\ker(R_{K,n})$, and its inverse is continuous.
\end{proposition}
\begin{proof}
We may again begin by replacing $R$ with $D_1$ and $\Lambda_{R_0,[0,b]}$ with $\Lambda_{D_1^\circ,[0,\frac{b}{\lambda}]}$, so that $v_{D_1,b/\lambda}=v_{D_1,b/\lambda,1}$.

As in the proof of~\cite[Proposition 9.9]{colmez}, we first observe that $\gamma-1$ is injective on $X_{D_1^\circ,[0,\frac{b}{\lambda}],K}^n$, since $\widetilde\Lambda_{D_1^\circ,[0,0],K}^{\gamma=1}=\Lambda_{D_1^\circ,[0,0],K}^{\gamma=1}$.

If $x\in X_{D_1^\circ,[0,\frac{b}{\lambda}],K}^n$, we can write 
\[	x=\sum_{j=n+1}^\infty\left(R_{K,j}(x)-R_{K,j-1}(x)\right) = \sum_{\substack{i\in\Z[1/p]\cap [0,1) \\ v_p(i)=-j}}a_i(x)[\varepsilon]^i	\]
and we have $v_{D_1,b/\lambda}\left(R_{K,j}(x)-R_{K,j-1}(x)\right) \geq v_{D_1,b/\lambda}(x)-p^{1-j}c_K(b,\lambda)$.  If $v_p(i)=-j$, then $\varphi^j(a_i(x)[\varepsilon]^i)\in \oplus_{\widetilde i=1}^{p-1}[\varepsilon]^{\widetilde i}\Lambda_{D_1^\circ,[0,p^{-j}b/\lambda],K}=\Lambda_{D_1^\circ,[0,p^{-j}b/\lambda],K}^{\psi=0}$; it follows by Lemma~\ref{lemma: gamma-1 inv} that if $j\geq n+1$, there is some $y_j\in \Lambda_{D_1^\circ,[0,p^{-j}b/\lambda]}^{\psi=0}$ with
\[	\varphi^j\left(R_{K,j}(x)-R_{K,j-1}(x)\right)=(\gamma-1)(y_j)	\]
and 
\[	v_{D_1,p^{-j}b/\lambda}(y_j)\geq p^jv_{D_1,b/\lambda}\left(R_{K,j}(x)-R_{K,j-1}(x)\right)-p^jv_{\C_p^\flat}(\overline\pi)	\]
Therefore, 
\[	v_{D_1,b/\lambda}(\varphi^{-j}(y_j))\geq v_{D_1,b/\lambda}\left(R_{K,j}(x)-R_{K,j-1}(x)\right)-v_{\C_p^\flat}(\overline\pi)	\]
Since the terms $R_{K,j}(x)-R_{K,j-1}(x)$ tend to $0$ in $\widetilde\Lambda_{D_1,[0,\frac{b}{\lambda}]}^{H_K}$ as $j\rightarrow \infty$, the same is true for $\varphi^{-j}(y_j)$.  Thus, $\sum_{j\geq n+1}\varphi^{-j}(y_j)$ converges to an element $y\in\widetilde\Lambda_{D_1,[0,b]}^{H_K}$ such that $(\gamma-1)(y)=x$ and
\begin{equation*}
\resizebox{\displaywidth}{!}{$
	v_{D_1,b/\lambda}(y)\geq \inf_{j\geq n+1}v_{D_1,b/\lambda}(\varphi^{-j}(y_j))\geq v_{D_1,b/\lambda}(x)-\sup_{j\geq n+1}\{p^{1-j}c_K(b,\lambda)+v_{\C_p^\flat}(\overline\pi)\}
$}
\end{equation*}
This supremum exists, so $(\gamma-1)^{-1}$ is continuous.
\end{proof}

\begin{corollary}
Suppose $R$ is a $D_\lambda$-algebra, where $\lambda=\frac{1}{m}$
, and $0<\frac{b}{\lambda}<r_K$. Then for any $c_3>\frac{p}{p-1}$, the third Tate--Sen axiom holds for the ring $\widetilde\Lambda_{R,[0,b]}^{H_K}$, the maps $\{R_{H,n}\}_{n\gg1}$, and the natural action of $\Gamma_K$.
\end{corollary}

\subsection{The construction of $(\varphi,\Gamma)$-modules}

Now we may apply the arguments of~\cite{berger-colmez} (working $u$-adically rather than $p$-adically) to construct $(\varphi,\Gamma)$-modules over rings $\Lambda_{R,[0,b],K}$.  Let $R$ be a pseudoaffinoid Tate ring over $D_\lambda$, where $\lambda=\frac 1 m$ with $p\nmid m$, with ring of definition $R_0\subset R$ and pseudo-uniformizer $u\in R_0$, and let $M$ be a free $R$-module of rank $d$ equipped with a continuous $R$-linear action of $\Gal_K$.

Following~\cite[Lemme 3.18]{chenevier}, we first find a Galois-stable lattice in $M$.
\begin{lemma}
Let $R$ be as above, and let $M$ be a free $R$-module of rank $d$ equipped with a continuous $R$-linear action of a compact topological group $G$.  Then there is a formal scheme $\mathcal{Y}\rightarrow \Spf(R_0)$ and a finite projective $\mathscr{O}_{\mathcal{Y}}$-module $\mathscr{M}$ equipped with a continuous $\mathscr{O}_{\mathcal{Y}}$-linear action of $G$ such that the natural map $\Spa R\rightarrow \Spa R_0$ factors through a morphism $f:\Spa R\rightarrow\mathcal{Y}^{\mathrm{ad}}$ and $M\cong f^\ast\mathscr{M}$.
\end{lemma}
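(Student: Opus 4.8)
The plan is to follow~\cite[Lemme 3.18]{chenevier}, replacing $p$ by $u$ throughout. Fix a basis of $M$ and let $\rho\colon G\rightarrow\GL_d(R)$ be the corresponding continuous homomorphism. The first step is to produce a $G$-stable $R_0$-lattice in $M$. Since $R_0$ is noetherian and $u$-adically complete, $1+u\Mat_d(R_0)$ is an open subgroup of $\GL_d(R)$ (it is a subgroup because $(1+uX)^{-1}=\sum_{k\ge 0}(-uX)^k$ converges $u$-adically in $\Mat_d(R_0)$), and it is contained in $\GL_d(R_0)$, which preserves $R_0^d$. Hence $G_1:=\rho^{-1}(1+u\Mat_d(R_0))$ is an open subgroup of $G$, of finite index by compactness; fix coset representatives $g_1=e,g_2,\dots,g_n$ and set $\Lambda:=\sum_{i=1}^n\rho(g_i)(R_0^d)\subset M$. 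This is a finitely generated $R_0$-module ($R_0$ being noetherian), it is $u$-torsion-free (as a submodule of $M$), and $\Lambda[1/u]=M$ since $\Lambda\supseteq R_0^d$. It is $G$-stable: for $g\in G$ one has $gg_i\in g_jG_1$ for some $j$, and since $\rho(G_1)\subseteq\GL_d(R_0)$ preserves $R_0^d$ we get $\rho(gg_i)(R_0^d)=\rho(g_j)(R_0^d)\subset\Lambda$. Finally $\Lambda/R_0^d$ is finitely generated and $u$-power-torsion (as $\Lambda[1/u]=R_0^d[1/u]$), hence killed by some $u^c$, so $u^c\Lambda\subseteq R_0^d\subseteq\Lambda$ and the subspace topology on $\Lambda\subset M$ coincides with its $u$-adic topology; in particular the $G$-action on $\Lambda$ is continuous for the $u$-adic topology.

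Next I would flatten $\Lambda$ by an admissible formal blowing-up. The module $\Lambda$ defines a coherent sheaf on $\Spf R_0$ carrying a continuous $G$-action, but it need not be flat. Since $R_0$ is noetherian, $u$-adically complete and $u$-torsion-free, the flattening theory of~\cite{bosch-lut} (with $u$ in the role of the uniformizer) produces an admissible formal blowing-up $\varphi\colon\mathcal{Y}\rightarrow\Spf R_0$ along an open ideal $\mathcal{A}\subseteq R_0$, which one may take to be $\Fitt_d(\Lambda)$ (this ideal contains a power of $u$ because $\Lambda$ is $R_0[1/u]$-free of rank $d$), such that the strict transform $\mathscr{M}$ of the pullback of $\Lambda$ to $\mathcal{Y}$ is flat over $\mathcal{Y}$. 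As $\mathscr{M}$ is coherent and $\mathcal{Y}$ noetherian, $\mathscr{M}$ is finite locally free, i.e.\ finite projective. What makes this $G$-equivariant is that $G$ acts trivially on $R_0$: an automorphism of $\Lambda$ preserves its Fitting ideals, so $\mathcal{A}$, hence $\mathcal{Y}$, is fixed by $G$; therefore each $g\in G$, acting on $\Lambda$ by an $R_0$-linear automorphism, induces by functoriality of pullback and of strict transform an $\mathscr{O}_{\mathcal{Y}}$-linear automorphism of $\mathscr{M}$, and these compose correctly, giving an action of $G$ on $\mathscr{M}$; it is continuous because the action on $\Lambda$ is (on each affine $\Spf A\subseteq\mathcal{Y}$, $\Gamma(\Spf A,\mathscr{M})$ is obtained from $\Lambda$ by a base change and a passage to a quotient by a $G$-stable submodule, both of which preserve continuity).

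To finish, I would identify the generic fibre. Because $\mathcal{A}$ contains a power of $u$, the induced map $\mathcal{Y}^{\mathrm{ad}}\rightarrow(\Spf R_0)^{\mathrm{ad}}=\Spa R_0$ is an isomorphism over the analytic locus of $\Spa R_0$, i.e.\ over the locus where $u$ is invertible, and there the exceptional divisor is trivial, so $\mathscr{M}$ restricts to the coherent sheaf attached to $\Lambda$. The natural morphism $\Spa R\rightarrow\Spa R_0$ has analytic source (as $R$ is Tate), so it lands in that locus and lifts uniquely to a morphism $f\colon\Spa R\rightarrow\mathcal{Y}^{\mathrm{ad}}$, and $f^\ast\mathscr{M}\cong\Lambda\otimes_{R_0}R=\Lambda[1/u]=M$, compatibly with the $G$-actions by construction. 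This produces $\mathcal{Y}$, $\mathscr{M}$ and $f$ as required.

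The step I expect to be the main obstacle is the flattening by admissible blowing-up together with its $G$-equivariance: one must check that the relevant theorem of~\cite{bosch-lut} applies in the present generality, where $R_0$ is merely $u$-torsion-free (and possibly of mixed or positive characteristic) rather than $\Z_p$-flat, and that the blowing-up can be chosen canonically enough in $\Lambda$ to carry the $G$-action — which is exactly where the triviality of the $G$-action on the base and the functoriality of strict transforms do the work. The remaining ingredients (openness of $G_1$, finite generation and $u$-torsion-freeness of $\Lambda$, the comparison of topologies, and the uniqueness of the lift $f$) are routine.
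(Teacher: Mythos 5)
Your proof is correct and takes essentially the same route as the paper, which simply observes that $R_0$ is noetherian and $u$-adically complete and then cites Chenevier's Lemme 3.18 as going through verbatim with $u$ in place of $p$; you have written out exactly that argument (a $G$-stable lattice obtained by summing translates of $R_0^d$ over cosets of $\rho^{-1}(1+u\Mat_d(R_0))$, flattening by an admissible formal blow-up, and identification on the generic fibre). The only cosmetic difference is that the paper checks the factorization $\Spa R\rightarrow\mathcal{Y}^{\mathrm{ad}}$ explicitly on the standard charts $\Spf R_0\left\langle\frac{f_1,\ldots,f_r}{f_i}\right\rangle$ of the blow-up, whereas you invoke the general fact that admissible blow-ups induce isomorphisms over the analytic locus.
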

\begin{proof}
We first observe that there is a finitely generated $R_0$-module $M_0\subset M$ such that $R\otimes_{R_0}M_0=M_0[1/u]=M$.  Indeed, we may simply consider a basis of $M$ and let $M_0$ be the $R_0$-module it generates inside $M$.

Since $R_0$ is noetherian and $u$-adically complete, $\Spf R_0$ is an admissible formal scheme, and the argument of~\cite[Lemme 3.18]{chenevier} goes through verbatim to produce an admissible formal blow-up $\mathcal{Y}\rightarrow \Spf R_0$ and a locally free $\mathscr{O}_{\mathcal{Y}}$-module $\mathscr{M}$ equipped with a continuous $\mathscr{O}_{\mathcal{Y}}$-linear action of $G$, such that $M=\Gamma(\mathcal{Y},\mathscr{M})\left[\frac 1 u\right]$.

It remains to see that $\Spa R\rightarrow\Spa R_0$ factors through a morphism $f:\Spa R\rightarrow\mathcal{Y}$.  If $\mathcal{Y}$ is the blow-up of $\Spf R_0$ along $I=(f_1,\ldots,f_r)\subset R_0$, then $\mathcal{Y}$ has a cover of the form $\left\{\Spf R_0\left\langle\frac{f_1,\ldots,f_r}{f_i}\right\rangle\right\}_i$.  The ring $R_0\left\langle\frac{f_1,\ldots,f_r}{f_i}\right\rangle$ is a noetherian ring of definition for the rational localization 
\[	U_i:=\Spa R\left\langle\frac{f_1,\ldots,f_r}{f_i}\right\rangle\subset \Spa R	\]
so the natural morphism $\Spa R|_{U_i}\rightarrow \Spa R_0$ factors through $\mathcal{Y}^{\mathrm{ad}}$ for each $i$.  Since these morphisms agree on overlaps by construction, we are done.
\end{proof}

Now we can construct $(\varphi,\Gamma)$-modules, exactly as in~\cite{berger-colmez}.
\begin{thm}
Let $R$ and $M$ be as above, and choose $b>0$ such that $\frac{b}{\lambda}<r_K$ and constants $c_1,c_2,c_3$ as in the statement of the Tate--Sen axioms.  Then there is some finite Galois extension $L/K$ and some integer $n\geq 0$ such that $\widetilde\Lambda_{R,[0,b]}\otimes_RM$ contains a unique projective sub-$\varphi^{-n}\left(\Lambda_{R,[0,p^{-n}b],L}\right)$-module $D_{b,L,n}(M)$ such that 
\begin{itemize}
\item	$D_{b,L,n}(M)$ is stable by $\Gal_L$ and fixed by $H_L$,
\item	The natural map $\widetilde\Lambda_{R,[0,b]}\otimes_{\varphi^{-n}\left(\Lambda_{R,[0,p^{-n}b],L}\right)}D_{b,L,n}(M)\rightarrow \widetilde\Lambda_{R,[0,b]}\otimes_RM$ is an isomorphism
\item	Locally on $\Spa R$, $D_{b,L,n}(M)$ admits a basis which is $c_3$-fixed by $\Gamma_K$, that is, if $\gamma\in\Gamma_K$, the matrix $G$ with respect to this basis satisfies $v_\Lambda(G-\mathrm{Id})>c_3$.
\end{itemize}
\end{thm}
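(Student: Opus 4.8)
The plan is to run the Tate--Sen descent machinery of Berger--Colmez, working $u$-adically in place of $p$-adically, with the three axioms furnished by the preceding results. First I would reduce to a lattice: although $M$ is free over $R$, a $\Gal_K$-stable $R_0$-lattice need only exist after an admissible formal blow-up, and the preceding lemma provides the blow-up $\mathcal{Y}\to\Spf R_0$ together with a finite projective $\Gal_K$-equivariant $\mathscr{O}_{\mathcal{Y}}$-module $\mathscr{M}$ with $M\cong f^\ast\mathscr{M}$ for a morphism $f\colon\Spa R\to\mathcal{Y}^{\mathrm{ad}}$. Pulling back a finite affine cover of $\mathcal{Y}$ along $f$ gives a finite cover of $\Spa R$ by rational subdomains $\Spa R_i$, on each of which $M$ restricts to a module carrying a free $\Gal_K$-stable lattice $M_{i,0}$ over a noetherian ring of definition $R_{i,0}$. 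I would carry out the construction over each $\Spa R_i$ and glue at the end, the uniqueness statement below being what makes the gluing possible and what forces the glued object to be \emph{projective} rather than merely free.

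Fixing a basis of $M_{i,0}$, I would regard the associated representation $\rho\colon\Gal_K\to\GL_d(R_{i,0})$ as a continuous $1$-cocycle valued in $\GL_d(\widetilde\Lambda_{R_{i,0},[0,b],0,\lambda})$ via $R_{i,0}\to\widetilde\Lambda_{R_{i,0},[0,b],0,\lambda}$. By Proposition~\ref{prop:ts-1-r} together with the corollaries verifying the second and third Tate--Sen axioms for $\{\varphi^{-n}(\Lambda_{R_0,[0,p^{-n}b],L})\}$ and the normalized trace maps $\{R_{L,n}\}$, the ring $\widetilde\Lambda_{R_{i,0},[0,b]}$ with the valuation $v_{R,b,\lambda}$ satisfies TS1--TS3 for the chosen constants $c_1,c_2,c_3$; this is where $b/\lambda<r_K$ is used. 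By Corollary~\ref{cor: ts-1-r hk-invts} (applied with the interval $[0,1]$ and then restricted, shrinking $b$ if necessary) there is a finite extension $L/K$---which I enlarge so as to be Galois over $K$---such that the restriction of the cocycle to $H_L$ is trivial, so that $(\widetilde\Lambda_{R_{i,0},[0,b]}\otimes_{R_{i,0}}M_{i,0})^{H_L}$ is free of rank $d$ and generates $\widetilde\Lambda_{R_{i,0},[0,b]}\otimes M_{i,0}$. In such an $H_L$-fixed basis the semilinear action of $\Gamma_L=\Gal_L/H_L$ is recorded by matrices $U_\gamma\in\GL_d(\widetilde\Lambda_{R_{i,0},[0,b]}^{H_L})$ satisfying the cocycle relation, and the standard consequence of TS2 and TS3---the descent step of \cite{berger-colmez}, run $u$-adically---provides, for $n$ sufficiently large and after a change of basis close to the identity, a new $H_L$-fixed basis which is $c_3$-fixed and in which all the $U_\gamma$ lie in $\GL_d(\varphi^{-n}(\Lambda_{R_{i,0},[0,p^{-n}b],L}))$. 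Taking $D_{b,L,n}(M_{i,0})$ to be the $\varphi^{-n}(\Lambda_{R_{i,0},[0,p^{-n}b],L})$-span of this basis gives a free module, stable by $\Gal_L$ and fixed by $H_L$, for which the natural comparison map with $\widetilde\Lambda_{R,[0,b]}\otimes M_{i,0}$ after base change to $\widetilde\Lambda_{R,[0,b]}$ is an isomorphism; inverting $u$ and gluing over the cover yields $D_{b,L,n}(M)$.

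Uniqueness I would prove exactly as in the classical case: a second submodule $D'$ with the listed properties lies in $(\widetilde\Lambda_{R,[0,b]}\otimes M)^{H_L}$, is $\Gamma_L$-stable, and admits locally a $c_3$-fixed basis; writing $C$ for the transition matrix to the corresponding basis of $D_{b,L,n}(M)$ and using the relation $U'_\gamma=C^{-1}U_\gamma\gamma(C)$, the invertibility of $\gamma-1$ on $\ker R_{L,n}$ (axiom TS3(a)) together with the $c_3$-bounds forces the entries of $C$ into $\varphi^{-n}(\Lambda_{R,[0,p^{-n}b],L})$, whence $D'=D_{b,L,n}(M)$. Applied to $\sigma(D_{b,L,n}(M))$ for $\sigma\in\Gal_K$, the same uniqueness shows that $\Gal_K$ preserves $D_{b,L,n}(M)$, so it is naturally a $\Gal_{L/K}$-equivariant $(\varphi,\Gamma_L)$-module, which is precisely the input for Corollary~\ref{cor: projectivity descends}.

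The main obstacle I anticipate is not the Tate--Sen descent itself, which is formal once the axioms are in hand, but the bookkeeping surrounding the blow-up: choosing the lattices $M_{i,0}$, the Galois extension $L$, and the integer $n$ uniformly across the finite cover, and verifying that the resulting $\varphi^{-n}(\Lambda_{R,[0,p^{-n}b],L})$-modules glue to a coherent---hence, being locally free, projective---subsheaf of $\widetilde\Lambda_{R,[0,b]}\otimes M$. Here the noetherianness of the imperfect rings $\Lambda_{R,[a,b],K}$ and the uniqueness statement do the real work.
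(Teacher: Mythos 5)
Your proposal is correct and follows essentially the same route as the paper: localize on $\Spa R$ to obtain a Galois-stable lattice, choose a finite Galois extension $L/K$ killing the cocycle modulo a suitable power of $u$ so that Corollary~\ref{cor: ts-1-r hk-invts} trivializes it on $H_L$, run the Berger--Colmez Tate--Sen descent $u$-adically to produce a $c_3$-fixed basis with matrices in $\GL_d(\varphi^{-n}(\Lambda_{R,[0,p^{-n}b],L}))$, and establish uniqueness via the transition-matrix argument (the analogue of \cite[Proposition 3.2.5]{berger-colmez}), which then handles both the gluing over the cover and the $\Gal_K$-stability. The paper's proof is simply a compressed version of what you wrote, citing \cite[Propositions 3.2.5, 3.2.6]{berger-colmez} as carrying over verbatim with $u$ in place of $p$.
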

\begin{proof}
After making an admissible formal blow-up on $\Spf R_0$ and localizing on $\Spa R$, we may assume that $M$ has a Galois-stable $R_0$-lattice $M_0\subset M$.  Let $k$ be an integer such that $v_{R,b}(u^k)>c_1+2c_2+2c_3$, and let $L/K$ be a finite Galois extension such that $\Gal_L$ acts trivially on $M_0/u^kM_0$.  Then by Corollary~\ref{cor: ts-1-r hk-invts}, $(\widetilde\Lambda_{R_0,[0,1]}\otimes_{R_0}M_0)^{H_L}$ is a free $\widetilde\Lambda_{R_0,[0,1]}^{H_L}$-module of rank $d$.  We choose a basis and let $\sigma\mapsto U_\sigma$ denote the corresponding cocycle.  The proof of ~\cite[Proposition 3.2.6]{berger-colmez} carries over nearly verbatim (working modulo powers of $u$ rather than $p$), and yields $B\in 1+u^k\Mat_d(\widetilde\Lambda_{R_0,[0,b]})$ such that $v_{R,b}(B-1)>c_2+c_3$ and $\sigma\mapsto B^{-1}U_\sigma\sigma(B)$ is trivial on $H_L$ and valued in $\Mat(\varphi^{-n(G)}(\Lambda_{R_0,[0,p^{-n(G)}b],L}))$.  This shows the existence of $D_{b,L,n}(M)$; it remains to check that it is unique. 

Suppose there are two such submodules. We may choose bases for each, and we obtain corresponding cocycles $\sigma\mapsto W_\sigma$ and $\sigma\mapsto W_\sigma'$ valued in $\Mat(\varphi^{-n(G)}(\Lambda_{R_0,[0,p^{-n(G)}b],L}))$.  Since these submodules generate the same $\widetilde\Lambda_{R_0,[0,b]}$-module, there is some matrix $C\in \Mat\widetilde\Lambda_{R_0,[0,b]}$ such that $W_\sigma'=C^{-1}W_\sigma(C)$.  But ~\cite[Proposition 3.2.5]{berger-colmez} also carries over nearly verbatim, and shows that $C$ actually has coefficients in $\varphi^{-n(G)}(\Lambda_{R_0,[0,p^{-n}b],L}))$.
\end{proof}

\begin{definition}
Let $M$ be a rank-$d$ representation of $\Gal_K$ with coefficients in $R$, and choose $b>0$ with $\frac{b}{\lambda}<r_K$, where $R$ is a $D_\lambda$-algebra and $\lambda=\frac 1 m$ with $p\nmid m$.  Then we define 
\begin{enumerate}
	\item	$D_{b,K}(M) := \left(\varphi^{n(L)}\left(D_{p^{n(L)}b,L,n}\right)\right)^{H_K}$
\item	If $I\subset [0,b]$ is an interval (which may have open endpoints), we define ${D}_{I,K}(M):={\Lambda}_{R,I,K}\otimes_{{\Lambda}_{R,[0,b],K}}{D}_{b,K}(M)$
\item	$D_{\rig,K}(M) := \varinjlim_{b\rightarrow 0} D_{(0,b],K}(M)$
\end{enumerate}
If $K$ is clear from context, we often drop it from the notation.
\end{definition}

After localizing on $\Spa R$, we may assume that $M$ has a Galois-stable $R_0$-lattice.  Then it follows from Corollary~\ref{cor: projectivity descends} that $D_{b,K}(M)$, and hence $D_{I,K}(M)$ and $D_{\rig,K}(M)$, is a projective $(\varphi,\Gamma_K)$-module.

\begin{remark}
The uniqueness of $D_{b,L,n}(M)$ ensures that the construction is functorial.
\end{remark}

\section{Galois cohomology}

We conclude by giving a definition of general $(\varphi,\Gamma)$-moduels over a pseudoaffinoid algebra $R$, and explaining how to compute the Galois cohomology of a Galois representation $M$ in terms of its $(\varphi,\Gamma)$-module $D_{\rig}(M)$.

\begin{definition}
        A $\varphi$-module over $\Lambda_{R,(0,b],K}$ is a coherent sheaf $D$ of  modules over the pseudorigid space $\bigcup_{a\rightarrow 0}\Spa(\Lambda_{R,[a,b],K})$ equipped with an isomorphism
        \[      \varphi_D:\varphi^\ast D\xrightarrow{\sim}\Lambda_{R,(0,b/p],K}\otimes_{\Lambda_{R,(0,b],K}}D   \]
        If $a\in (0,b/p]$, a $\varphi$-module over $\Lambda_{R,[a,b],K}$ is a finite $\Lambda_{R,[a,b],K}$-module $D$ equipped with an isomorphism
        \[      \varphi_{D,[a,b/p]}:\Lambda_{R,[a,b/p],K}\otimes_{\Lambda_{R,[a/p,b/p],K}}\varphi^\ast D\xrightarrow{\sim} \Lambda_{R,[a,b/p],K}\otimes_{\Lambda_{R,[a,b],K}}D  \]

        A $(\varphi,\Gamma_K)$-module over $\Lambda_{R,(0,b],K}$ (resp. $\Lambda_{R,[a,b],K}$) is a $\varphi$-module over $\Lambda_{R,(0,b],K}$ (resp. $\Lambda_{R,[a,b],K}$) equipped with a semi-linear action of $\Gamma_K$ which commutes with $\varphi_D$ (resp. $\varphi_{D,[a,b/p]}$).

        A $(\varphi,\Gamma_K)$-module over $R$ is a module $D$ over $\Lambda_{R,\rig,K}$ which arises via base change from a $(\varphi,\Gamma_K)$-module over $\Lambda_{R,(0,b],K}$ for some $b>0$.
\end{definition}

For any finite extension $K/\Q_p$, we may write $\Gamma_K\cong \Gamma_K^{p\mathrm{-tors}}\times \Gamma_K^{\cyc}$, where $\Gamma_K^{p\mathrm{-tors}}$ denotes the $p$-torsion subgroup of $\Gamma_K$ and $\Gamma_K^{\cyc}$ is its procyclic quotient; $\Gamma_K^{p\mathrm{-tors}}$ is trivial unless $p=2$, in which case it could be $(\Z/4)^\times$.  Then for any topological generator $\gamma$ of $\Gamma_K^{\cyc}$, we define the Fontaine--Herr--Liu complex 
\[      C_{\varphi,\Gamma}^\bullet: D\xrightarrow{\varphi_D-1,\gamma-1} D\oplus D\xrightarrow{(\gamma-1)\oplus(1-\varphi_D)} D      \]
(concentrated in degrees $0$, $1$, and $2$).  We let $H_{\varphi,\Gamma_K}^i(D)$ denote its cohomology in degree $i$.  We remark that, as in ~\cite[\textsection 2.3]{kpx}, the complex $C_{\varphi,\Gamma}^\bullet$ is independent of the choice of $\gamma$ up to canonical $R$-linear quasi-isomorphism.

If $M$ is a $\Q_p$-linear representation of $\Gal_K$ and $D_{\rig,K}(M)$ is the associated Galois representation over $\varinjlim_{b\rightarrow 0}\Lambda_{(0,b],K}$, then we have a canonical quasi-isomorphism $R\Gamma(\Gal_K,M)\xrightarrow\sim C_{\varphi,\Gamma}^\bullet$ between (continuous) Galois cohomology and Fontaine--Herr--Liu cohomology~\cite[Theorem 2.3]{liu2007}.  The same result holds for families of projective Galois representations with coefficients in classical $\Q_p$-affinoid algebras~\cite[Theorem 2.8]{pottharst2013}; we will prove the corresponding result in the pseudorigid setting.

\begin{thm}\label{thm: coh comp}
        Let $R$ be a pseudoaffinoid algebra, and let $M$ be a finite projective $R$-module equipped with a continuous $R$-linear action of $\Gal_K$.  Then the Galois cohomology $R\Gamma(\Gal_K,M)$ and the Fontaine--Herr--Liu cohomology $R\Gamma\left(\Gamma_K^{p\mathrm{-tors}},C_{\varphi,\Gamma_K^{\cyc}}^\bullet(D_{K,[0,b]}(M))\right)$ of the associated $(\varphi,\Gamma_K)$-module are canonically isomorphic.
\end{thm}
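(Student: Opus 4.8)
The plan is to follow the strategy of Herr, of Liu~\cite[Theorem 2.3]{liu2007}, and of Pottharst~\cite[Theorem 2.8]{pottharst2013}, working $u$-adically in place of $p$-adically throughout (as was already forced on us in the construction of $D_{[0,b],K}(M)$), and keeping track of the $p$-torsion of $\Gamma_K$, which is invisible in the classical rigid setting. Concretely, Step~1 is the reduction to a lattice: the functor $M\mapsto R\Gamma(\Gal_K,M)$ sheafifies on $\Spa R$ for finite projective $M$ with continuous $\Gal_K$-action, and the right-hand side is built from the coherent sheaf $D_{[0,b],K}(M)$ by applying the finite complex $C^\bullet_{\varphi,\Gamma_K^{\cyc}}$ and then $R\Gamma(\Gamma_K^{p\mathrm{-tors}},-)$, hence is itself a perfect complex of sheaves; the comparison morphism is functorial by the uniqueness built into the construction of $D_{b,L,n}(M)$ and so will glue. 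Passing to the charts of the admissible formal blow-up provided by the lemma on $\Gal_K$-stable lattices, we may therefore assume that $M$ admits a $\Gal_K$-stable, locally free $R_0$-lattice $M_0\subset M$ with $M_0[1/u]=M$; we prove the statement for $M_0$-coefficients and invert $u$ at the end.

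\textbf{Step 2: introducing $\varphi$ and decompleting $H_K$.} Over $R_0\htimes\widetilde\A$ (with $\widetilde\A=W(\C_p^\flat)$) one has $(R_0\htimes\widetilde\A)^{\varphi=1}=R_0$ and $\varphi-1$ surjective, giving a short exact sequence of $\Gal_K$-modules
\[
0\to M_0\to (R_0\htimes\widetilde\A)\otimes_{R_0}M_0\xrightarrow{\varphi-1}(R_0\htimes\widetilde\A)\otimes_{R_0}M_0\to 0,
\]
so that $R\Gamma(\Gal_K,M_0)$ is the mapping fibre of $\varphi-1$ acting on $R\Gamma(\Gal_K,(R_0\htimes\widetilde\A)\otimes M_0)$. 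By Hochschild--Serre for $1\to H_K\to\Gal_K\to\Gamma_K\to 1$ the latter is computed from $R\Gamma(\Gamma_K,R\Gamma(H_K,(R_0\htimes\widetilde\A)\otimes M_0))$. The decompletion input is that $R\Gamma(H_K,(R_0\htimes\widetilde\A)\otimes M_0)$ is concentrated in degree $0$, equal to the perfect $(\varphi,\Gamma_K)$-module $\widetilde D(M_0):=((R_0\htimes\widetilde\A)\otimes M_0)^{H_K}$; this rests on the vanishing of the higher continuous $H_K$-cohomology of the perfect period ring, which I would establish $u$-adically by bootstrapping from characteristic $p$ through the ideals $I_j=p^jR\cap R_0$ exactly as in \S\ref{subsection: H_K-invts}, using the $H_L$-trivialisation of the defining cocycle (Corollary~\ref{cor: ts-1-r hk-invts}) and Corollary~\ref{cor: hk invts mod u-pi}.

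\textbf{Step 3: overconvergence, the imperfect ring, and assembly.} Berger--Colmez overconvergence~\cite{berger-colmez}---concretely, the construction of $D_{b,L,n}(M)$ together with the descent of projectivity (Corollary~\ref{cor: projectivity descends})---replaces $\widetilde D(M_0)$ by its overconvergent form $\widetilde D_{(0,b]}(M_0)=(\widetilde\Lambda_{R,(0,b]}\otimes M_0)^{H_K}$ without changing $\Gamma_K$-cohomology, and the normalized trace maps $R_{K,n}\colon\widetilde\Lambda_{R_0,[0,b]}^{H_K}\to\varphi^{-n}(\Lambda_{R_0,[0,p^{-n}b],K})$, together with the continuous invertibility of $\gamma-1$ on $\ker R_{K,n}$ and on $\psi=0$-parts (Proposition~\ref{prop: gamma-1 inv d}), yield $R\Gamma(\Gamma_K,\widetilde D_{(0,b]}(M_0))\simeq R\Gamma(\Gamma_K,D_{[0,b],K}(M_0))$, independently of $b$ since $\varphi$ is étale (one may pass to $D_{\rig,K}(M)$ if one prefers). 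Hence $R\Gamma(\Gal_K,M_0)$ is the mapping fibre of $\varphi-1$ on $R\Gamma(\Gamma_K,D_{[0,b],K}(M_0))$. Finally, using $\Gamma_K\cong\Gamma_K^{p\mathrm{-tors}}\times\Gamma_K^{\cyc}$ and writing $R\Gamma(\Gamma_K^{\cyc},-)$ as the two-term complex $[\,D\xrightarrow{\gamma-1}D\,]$, this mapping fibre is, by the very definition of $C^\bullet_{\varphi,\Gamma_K^{\cyc}}$, the complex $R\Gamma(\Gamma_K^{p\mathrm{-tors}},C^\bullet_{\varphi,\Gamma_K^{\cyc}}(D_{[0,b],K}(M_0)))$; inverting $u$ and regluing over $\Spa R$ gives the theorem.

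\textbf{The main obstacle.} The crux is the $H_K$-decompletion of Step~2 in a setting where $p$ need not be invertible: the classical almost-purity computation of $H^{\ge 1}(H_K,\cdot)$ for perfectoid-type period rings is not directly citable, so one must instead control these groups $u$-adically, filtering $R_0$ by the $I_j$ and reducing to the characteristic-$p$ and discrete-coefficient cases handled by the Tate--Sen machinery of the previous sections (this is also where one uses the explicit structure of $(R_0\htimes\A_{\mathrm{inf}})/(u-[\overline\pi]^{s(a)})$ worked out in \S\ref{subsection: H_K-invts}). A secondary point, genuinely new relative to the rigid-analytic case, is that $\#\Gamma_K^{p\mathrm{-tors}}$ is not invertible when $p=2$, so $R\Gamma(\Gamma_K^{p\mathrm{-tors}},-)$ cannot be collapsed to honest invariants; this is exactly why the theorem is phrased with derived invariants rather than as a bare quasi-isomorphism $R\Gamma(\Gal_K,M)\simeq C^\bullet_{\varphi,\Gamma_K}(D_{\rig,K}(M))$.
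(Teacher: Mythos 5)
Your overall architecture — Artin--Schreier d\'evissage, Hochschild--Serre for $H$, vanishing of higher $H$-cohomology of the perfect period ring via Tate--Sen, decompletion to the imperfect ring via the normalized traces $R_{K,n}$ and the invertibility of $\gamma-1$ on $\ker R_{K,n}$, and the final bookkeeping with $\Gamma_K\cong\Gamma_K^{p\mathrm{-tors}}\times\Gamma_K^{\cyc}$ — is the right one and matches the paper's proof in most respects. The one substantive divergence is in Step~2/Step~3, and it hides a genuine gap.

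You run the Artin--Schreier sequence on the full Witt ring $R_0\htimes\widetilde\A$ and then assert that ``Berger--Colmez overconvergence'' lets you replace $\widetilde D(M_0)$ by its overconvergent form $\widetilde D_{(0,b]}(M_0)$ \emph{without changing the $\Gamma_K$-cohomology of the fibre of $\varphi-1$}. That last clause is not a formal consequence of the existence of $D_{b,L,n}(M)$: it is an overconvergence-of-cohomology statement of Cherbonnier--Colmez/Kedlaya--Liu type (in the classical case it amounts to comparing $(\varphi-1)$ acting on $\widetilde\A\otimes M$ with $(\varphi-1)$ acting on $\widetilde\Lambda_{(0,b]}\otimes M$, e.g.\ via $D^{\psi=1}=D^{\dagger,\psi=1}$), and no such result is available off the shelf over a pseudoaffinoid base. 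The paper avoids this comparison entirely by proving the Artin--Schreier exactness \emph{directly on the overconvergent level}: Proposition~\ref{prop: phi-invts} shows that
\[
0\rightarrow R_0\rightarrow\widetilde\Lambda_{R_0,[0,b],0}\xrightarrow{\;\varphi-1\;}\widetilde\Lambda_{R_0,[0,b/p],0}\rightarrow 0
\]
is exact (note that $\varphi$ shrinks the interval, so source and target are different rings — this is why the surjectivity is not the classical statement for $W(\C_p^\flat)$ and requires the explicit computation with $\mathscr{O}_{\C_p^\flat}[Y]/(\varpi^{1/b}Y)$, followed by the $(p,u)$-adic d\'evissage). Tensoring this sequence with $M_0$ and applying $H_L$-cohomology (Lemma~\ref{lemma: H_K coh triv}, which is your Tate--Sen vanishing, but for $\widetilde\Lambda_{R_0,[0,b]}$ rather than for $\widetilde\A$) one lands immediately in the overconvergent perfect setting, and the only remaining step is the decompletion $C^\bullet_{\varphi,\Gamma_L}(D_{b,L}(M))\xrightarrow{\sim}C^\bullet_{\varphi,\Gamma_L}(\widetilde\Lambda_{R_0,[0,b]}^{H_L}\otimes D_{b,L}(M))$, which is exactly the $R_{K,n}$/$(\gamma-1)^{-1}$ argument you describe. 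So to repair your proof you should either supply the missing $\widetilde\A$-versus-$\widetilde\Lambda_{(0,b]}$ comparison, or (as the paper does) replace your Step~2 input by the overconvergent Artin--Schreier sequence of Proposition~\ref{prop: phi-invts}; the latter is the one genuinely new computation your outline is missing. A second, cosmetic difference: the paper strips off $\Gamma_K^{p\mathrm{-tors}}$ at the outset by Hochschild--Serre for $K'/K$ rather than reinstating it at the end, but your version of this step is equivalent.
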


We also deduce the analogous result for $D_{\rig}(M)$:
\begin{corollary}
	Let $R$ be a pseudoaffinoid algebra, and let $M$ be a finite projective $R$-module equipped with a continuous $R$-linear action of $\Gal_K$.  Then the Galois cohomology $H^\bullet(\Gal_K,M)$ and the cohomology of the Fontaine--Herr--Liu complex $R\Gamma\left(\Gamma_K^{p-\mathrm{tors}}, C_{\varphi,\Gamma}^\bullet(D_{\rig,K}(M))\right)$ of the associated $(\varphi,\Gamma_K)$-module are canonically isomorphic.
\end{corollary}
\begin{proof}
	Replacing $K$ with the extension corresponding to the quotient $\Gamma_K\twoheadrightarrow\Gamma_K^{p-\mathrm{tors}}$, the Hochschild--Serre spectral sequence implies that we may assume that $\Gamma_K^{p-\mathrm{tors}}$ is trivial.  After making an admissible formal blowup on $\Spf R_0$ and localizing on $\Spa R$, we may again assume that $M$ is a free $R$-module containing a free $\Gal_K$-stable $R_0$-lattice, and there is some finite Galois extension $L/K$ and some $b>0$ such that $D_b':=\Lambda_{R_0,[0,b],L}\otimes_{\Lambda_{R_0,[0,b],K}}D_{b,K}(M)$ is free, by Lemma~\ref{lemma: galois descent phi gamma} and the proof of Corollary~\ref{cor: projectivity descends}.  Copying the proof of~\cite[Proposition 1.2.6]{kedlaya08} verbatim, we see that the natural morphism
	\[      \left[\varinjlim_{b\rightarrow0}D_b'\xrightarrow{\varphi-1}\varinjlim_{b\rightarrow0}D_{b/p}'\right]\rightarrow \left[\Lambda_{R,\rig,K}\otimes D_b\xrightarrow{\varphi-1}\Lambda_{R,\rig,K}D_{b/p}'\right]        \]
	is a quasi-isomorphism.  Since $D_{b,K}(M)$ is a direct summand of $D_b'$ as a $\varphi$-module, the same holds for the natural morphism
\[      \left[\varinjlim_{b\rightarrow0}D_{b,K}(M)\xrightarrow{\varphi-1}\varinjlim_{b\rightarrow0}D_{b/p,K}(M)\right]\rightarrow \left[D_{\rig,K}(M)\xrightarrow{\varphi-1}D_{\rig,K}(M)\right]        \]

The Fontaine--Herr--Liu complex is the total complex of the double complex
\[
	\begin{tikzcd}
		D_{\rig,L}(M) \arrow{r}{\varphi-1} & D_{\rig,L}(M)	\\
		D_{\rig,L}(M) \arrow{r}{\varphi-1}\arrow{u}{\gamma-1} & D_{\rig,L}(M) \arrow{u}{\gamma-1}
	\end{tikzcd}
\]
and colimits commute with taking cohomology; we therefore get a quasi-isomorphism
\[	\varinjlim_{b\rightarrow 0}C_{\varphi,\Gamma_L^{\cyc}}^\bullet(D_{b,L}(M))\xrightarrow\sim D_{\rig,L}(M)	\]
as desired.
\end{proof}

\begin{remark}
	If $p\neq 2$, then we see that Galois cohomology is computed by the Fontaine--Herr--Liu complex.
\end{remark}

The key is an Artin--Schreier calculation:
\begin{proposition}\label{prop: phi-invts}
Let $R$ be a pseudoaffinoid $D_\lambda$-algebra and let $R_0\subset R$ be a ring of definition strictly topologically of finite type over $D_\lambda^\circ$.  Then for any $b>0$, there is an exact sequence of $R_0$-modules
\[	0\rightarrow R_0\rightarrow\widetilde\Lambda_{R_0,[0,b],0}\xrightarrow{\varphi-1} \widetilde\Lambda_{R_0,[0,b/p],0}\rightarrow 0	\]
\end{proposition}

We first compute modulo $u$:
\begin{lemma}
If $R$ is a pseudoaffinoid $D_\lambda$-algebra and $R_0\subset R$ is a ring of definition strictly topologically of finite type over $D_\lambda^\circ$, then for any $b\in (0,\infty]$ we have an exact sequence
\[	0\rightarrow R_0/u\rightarrow \widetilde\Lambda_{R_0,[0,b],0}/u\xrightarrow{\varphi-1}\widetilde\Lambda_{R_0,[0,b/p],0}/u\rightarrow 0	\]
\end{lemma}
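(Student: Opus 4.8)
The plan is to describe the two rings, together with the Frobenius $\varphi$ and the natural inclusion $\iota\colon\widetilde\Lambda_{R_0,[0,b],0}\hookrightarrow\widetilde\Lambda_{R_0,[0,b/p],0}$, explicitly after reduction modulo $u$, and then to reduce the assertion to an Artin--Schreier computation over $\mathscr{O}_{\C_p}^\flat$. (One cannot merely reduce the integral sequence of Proposition~\ref{prop: phi-invts} modulo $u$: the map $\iota$ becomes highly non-injective modulo $u$.) I would first treat the case $p=0$ in $R$, and then bootstrap.

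\textbf{The characteristic-$p$ case.} Suppose $p=0$ in $R$, so that $S:=R_0/u$ is a discrete $\F_p$-algebra and $(R_0\htimes\A_{\mathrm{inf}})/u\cong S\htimes_{\F_p}\mathscr{O}_{\C_p}^\flat$ (the left side being killed by $p$). Writing $X=u/[\varpi]^{1/b}$ and $X'=u/[\varpi]^{p/b}$, one checks that $\widetilde\Lambda_{R_0,[0,b],0}/u\cong (S\htimes_{\F_p}\mathscr{O}_{\C_p}^\flat)\oplus\bigoplus_{i\ge1}(S\otimes_{\F_p}\mathscr{O}_{\C_p}^\flat/[\varpi]^{1/b})X^i$, an honest direct sum because the degree-$i$ coefficient spaces for $i\ge1$ are discrete, and that $\widetilde\Lambda_{R_0,[0,b/p],0}/u$ is the analogous ring built from $X'$ with $[\varpi]^{1/b}$ replaced by $[\varpi]^{p/b}$. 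On these rings $\varphi$ acts by the relative ($S$-linear) Frobenius of $\mathscr{O}_{\C_p}^\flat$ and sends $X\mapsto X'$, while $\iota$ fixes the $\mathscr{O}_{\C_p}^\flat$-part and sends $X\mapsto [\varpi]^{(p-1)/b}X'$; moreover $\iota(X^i)\in u\widetilde\Lambda_{R_0,[0,b/p],0}$ for $i\ge2$ (because $u^{i-1}/[\varpi]^{i/b}$ already lies in $\widetilde\Lambda_{R_0,[0,b/p],0}$ once $i\ge2$), so $\iota$ kills the $X^i$-coefficients modulo $u$ for $i\ge2$. Hence $\varphi-\iota$ is block-diagonal in the $X$-degree: on the degree-$0$ block it is the relative Artin--Schreier operator $z\mapsto\varphi(z)-z$ on $S\htimes\mathscr{O}_{\C_p}^\flat$; on a degree-$i$ block with $i\ge2$ it is $\varphi$, an isomorphism $S\otimes\mathscr{O}_{\C_p}^\flat/[\varpi]^{1/b}\xrightarrow{\ \sim\ }S\otimes\mathscr{O}_{\C_p}^\flat/[\varpi]^{p/b}$ by perfectness of $\mathscr{O}_{\C_p}^\flat$; and on the degree-$1$ block it is $c\mapsto\varphi(c)-[\varpi]^{(p-1)/b}c$. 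Using that $\C_p^\flat$ is algebraically closed I would then check that the degree-$0$ operator is surjective with kernel $\F_p$ (contracting-geometric-series argument on the part of positive valuation, plus a valuation estimate for the unit part), and that the degree-$1$ operator is bijective (injectivity is a one-line valuation computation; surjectivity follows because the polynomial $T^p-[\varpi]^{(p-1)/b}T-\tilde y$ always has a root in $\mathscr{O}_{\C_p}^\flat$, by inspection of its Newton polygon). Working with an $\F_p$-basis of $S$ makes all of this $S$-linear, and assembling the blocks — the degree-$0$ kernel being exactly $S=R_0/u$ and the higher blocks contributing nothing — gives the exact sequence in this case.

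\textbf{The general case.} For an arbitrary pseudoaffinoid $D_\lambda$-algebra $R$, the ring $A':=\widetilde\Lambda_{R_0,[0,b],0}/u$ is killed by $p^m$ (since $p^m\in u^{m'}R_0$); filter it by the $p$-adic filtration $A'\supset pA'\supset\cdots\supset p^mA'=0$, and likewise $B':=\widetilde\Lambda_{R_0,[0,b/p],0}/u$. As $\varphi$ commutes with multiplication by $p$, the operator $\varphi-\iota$ respects these filtrations, giving short exact sequences of two-term complexes relating the $p^{i+1}$-level, the $p^i$-level, and the graded level $p^iA'/p^{i+1}A'\to p^iB'/p^{i+1}B'$. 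Because the $\A_{\mathrm{inf}}$-factor of $A'$ is flat over $\Z/p^m$, each graded piece is again a characteristic-$p$ ring of the type treated in the first step (with coefficient a suitable $\F_p$-algebra quotient of $R_0/u$), so the first step yields $0\to p^i(R_0/u)/p^{i+1}(R_0/u)\to p^iA'/p^{i+1}A'\xrightarrow{\varphi-1} p^iB'/p^{i+1}B'\to 0$ on each graded piece. A descending induction on $i$ together with the long exact cohomology sequences of these short exact sequences of complexes propagates surjectivity and the kernel computation up the filtration, yielding $0\to R_0/u\to A'\xrightarrow{\varphi-1}B'\to 0$.

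The main obstacle is the first step, and within it the bookkeeping of $\varphi$ and $\iota$ on the explicit presentations modulo $u$; in particular the degree-$1$ block must be handled by a root-existence (Newton-polygon) argument rather than a naive Neumann series for $(\varphi-\iota)^{-1}$, which does not converge because the relevant coefficient rings are discrete.
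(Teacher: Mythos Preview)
Your proposal is correct and follows essentially the same route as the paper's own proof: identify $\widetilde\Lambda_{R_0,[0,b],0}/u$ with $(R_0/u\htimes\A_{\mathrm{inf}})[Y]/([\varpi]^{1/b}Y)$ (and similarly for $b/p$), reduce to the case of $\F_p$-coefficients where the computation becomes an Artin--Schreier problem over $\mathscr{O}_{\C_p}^\flat$ in each $Y$-degree, and then bootstrap via the $p$-adic filtration on $R_0/u$. The only differences are cosmetic: the paper treats all degrees $i\ge0$ uniformly via the single equation $a_i^p-\varpi^{i(p-1)/b}a_i=b_i$ (solvable because $\C_p^\flat$ is algebraically closed), whereas you split into $i=0$, $i=1$, and $i\ge2$ and use a geometric series and a Newton-polygon argument in place of the blanket appeal to algebraic closedness; your observation that $\iota$ vanishes on degrees $\ge2$ modulo $u$ is equivalent to the paper's observation that $\varpi^{i(p-1)/b}\in(\varpi^{p/b})$ for $i\ge2$.
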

\begin{proof}
We may write 
\[	\widetilde\Lambda_{R_0,[0,b],0}/u\cong \Lambda_{R_0,[0,\infty]}[Y]/([\varpi]^{1/b}Y,u)	\]
and 
\[	\widetilde\Lambda_{R_0,[0,b/p],0}/u\cong \Lambda_{R_0,[0,\infty]}[Y']/([\varpi]^{p/b}Y',u)	\]
so that the map $\varphi:\widetilde\Lambda_{R_0,[0,b],0}/u\rightarrow\widetilde\Lambda_{R_0,[0,b/p],0}/u$ carries $Y$ to $Y'$ and the identity map carries $Y$ to $[\varpi]^{(p-1)/b}Y'$.  We may filter $R_0/u$ by powers of $p$; if we reduce modulo $p$, $R_0/(u,p)$ is an $\F_p$-vector space, and it suffices to prove that the sequence
\[	0\rightarrow \F_p\rightarrow \mathscr{O}_{\C_p}^\flat[Y]/(\varpi^{1/b}Y)\xrightarrow{\varphi-1}\mathscr{O}_{\C_p}^\flat[Y']/(\varpi^{p/b}Y')\rightarrow 0	\]
is exact.  

Given a polynomial $f(Y):=\sum_i a_iY^i\in \mathscr{O}_{\C_p}^\flat[Y]$,
\[	(\varphi-1)(f(Y)) = \sum_i (\varphi(a_i)-\varpi^{i(p-1)/b}a_i){Y'}^i \]
To compute the kernel of $\varphi-1$, we may assume that $v_{\C_p^\flat}(a_i)<\frac 1 b$ for all $i$ with $a_i\neq 0$, and that $v_{\C_p^\flat}(\varphi(a_i)-\varpi^{i(p-1)/b}a_i)\geq \frac p b$ for $i\geq 1$.  We have $v_{\C_p^\flat}(\varphi(a_i)-\varpi^{i(p-1)/b}a_i)\geq\min\{pv_{\C_p^\flat}(a_i),\frac{i(p-1)}{b}+v_{\C_p^\flat}(a_i)\}$, with equality unless $v_{\C_p^\flat}(a_i)=\frac{i}{b}$.  If $i\geq 1$, this contradicts the assumption that $v_{\C_p^\flat}(a_i)<\frac 1 b$, so in that case $v_{\C_p^\flat}(\varphi(a_i)-\varpi^{i(p-1)/b}a_i)\geq \frac p b$ implies $\min\{pv_{\C_p^\flat}(a_i),\frac{i(p-1)}{b}+v_{\C_p^\flat}(a_i)\}\geq\frac p b$.  But $pv_{\C_p^\flat}(a_i)<\frac p b$ by assumption, so this is impossible.  Thus, if $f(Y)$ represents an element of the kernel of $\varphi-1$, its coefficients in positive degree have valuation at least $\frac 1 b$, and therefore vanish in $\mathscr{O}_{\C_p}^\flat[Y]/(\varpi^{1/b}Y)$. Thus, we may assume that $f(Y)\in\mathscr{O}_{\C_p}^\flat$, and therefore that it is an element of $\F_p$.

To see that $\varphi-1:\mathscr{O}_{\C_p}^\flat[Y]/(\varpi^{1/b}Y)\rightarrow \mathscr{O}_{\C_p}^\flat[Y']/(\varpi^{p/b}Y')$ is surjective, we may lift an element of $\mathscr{O}_{\C_p}^\flat[Y']/(\varpi^{p/b}Y')$ to a polynomial $g(Y'):=\sum_i b_i{Y'}^i\in\mathscr{O}_{\C_p}^\flat[Y']$, and choose $a_i$ such that $a_i^p-\varpi^{i(p-1)/b}a_i=b_i$.  Then if $f(Y):=\sum_ia_iY^i$, we have $(\varphi-1)(f(Y))=g(Y)$, as desired.

Now suppose that we have an exact sequence 
\[	0\rightarrow R_0/(u,p^k)\rightarrow\widetilde\Lambda_{R_0,[0,b],0}/(u,p^k)\xrightarrow{\varphi-1}\widetilde\Lambda_{R_0,[0,b/p],0}/(u,p^k)\rightarrow 0	\]
and consider the diagram
\begin{equation*}
\resizebox{\displaywidth}{!}{
$
\begin{tikzcd}[ampersand replacement=\&]
	\& 0 \ar[d] \& 0 \ar[d] \& 0 \ar[d] \&	\\
0 \ar[r] \& p^kR_0/(u,p^{k+1}) \ar[r]\ar[d] \& p^k\widetilde\Lambda_{R_0,[0,b],0}/(u,p^{k+1}) \arrow{r}{\varphi-1}\ar[d] \& p^k\widetilde\Lambda_{R_0,[0,b/p],0}/(u,p^{k+1}) \ar[r]\ar[d] \& 0	\\
0\ar[r] \& R_0/(u,p^{k+1}) \ar[r]\ar[d] \& {\widetilde\Lambda}_{R_0,[0,b],0}/(u,p^{k+1}) \arrow{r}{\varphi-1}\ar[d] \& {\widetilde\Lambda}_{R_0,[0,b/p],0}/(u,p^{k+1}) \ar[r]\ar[d] \& 0	\\
0\ar[r] \& R_0/(u,p^{k}) \ar[r]\ar[d] \& {\widetilde\Lambda}_{R_0,[0,b],0}/(u,p^{k}) \arrow{r}{\varphi-1}\ar[d] \& {\widetilde\Lambda}_{R_0,[0,b/p],0}/(u,p^{k}) \ar[r]\ar[d] \& 0	\\
	\& 0 \& 0 \& 0 \&
\end{tikzcd}
$
}
\end{equation*}
The bottom row is exact by assumption, and the columns are exact by construction.  Moreover, 
\[	p^k\widetilde\Lambda_{R_0,[0,b],0}/(u,p^{k+1})\cong p^kR_0/(u,p^{k+1})\htimes\mathscr{O}_{\C_p}^\flat[Y]/(\varpi^{1/b}Y)	\]
and 
\[	p^k\widetilde\Lambda_{R_0,[0,b/p],0}/(u,p^{k+1})\cong p^kR_0/(u,p^{k+1})\htimes\mathscr{O}_{\C_p}^\flat[Y']/(\varpi^{p/b}Y')	\]
Since $p^kR_0/(u,p^{k+1})$ is an $\F_p$-vector space, the preceding calculation shows that the top row is exact, as well.  A diagram chase then shows that the middle row is exact, as desired.
\end{proof}

\begin{proof}[Proof of Proposition~\ref{prop: phi-invts}]
We work modulo successive powers of $u$; we claim that for any $k\geq 1$, the sequence
\[	0\rightarrow R_0/u^k\rightarrow \widetilde\Lambda_{R_0,[0,b],0}/u^k\xrightarrow{\varphi-1}\widetilde\Lambda_{R_0,[0,b/p],0}/u^k\rightarrow 0	\]
is exact.  We have proved the result for $k=1$, so we proceed by induction on $k$.  Assume the result for $k$ and consider the diagram
\begin{equation*}
\resizebox{\displaywidth}{!}{$
\begin{tikzcd}[ampersand replacement=\&]
	\& 0 \ar[d] \& 0 \ar[d] \& 0 \ar[d] \&	\\
0 \ar[r] \& u^kR_0/u^{k+1} \ar[r]\ar[d] \& u^k\widetilde\Lambda_{R_0,[0,b],0}/u^{k+1} \arrow{r}{\varphi-1}\ar[d] \& u^k\widetilde\Lambda_{R_0,[0,b/p],0}/u^{k+1} \ar[r]\ar[d] \& 0	\\
0\ar[r] \& R_0/u^{k+1} \ar[r]\ar[d] \& {\widetilde\Lambda}_{R_0,[0,b],0}/u^{k+1} \arrow{r}{\varphi-1}\ar[d] \& {\widetilde\Lambda}_{R_0,[0,b/p],0}/u^{k+1} \ar[r]\ar[d] \& 0	\\
0\ar[r] \& R_0/u^{k} \ar[r]\ar[d] \& {\widetilde\Lambda}_{R_0,[0,b],0}/u^{k} \arrow{r}{\varphi-1}\ar[d] \& {\widetilde\Lambda}_{R_0,[0,b/p],0}/u^{k} \ar[r]\ar[d] \& 0	\\
	\& 0 \& 0 \& 0 \&
\end{tikzcd}
$}
\end{equation*}
The bottom row is exact by assumption and the columns are exact by construction.  Since $R_0$ has no $u$-torsion, multiplication by $u^k$ defines an isomorphism $R_0/u\xrightarrow{\times u^k}u^kR_0/u^{k+1}$, and the top row is isomorphic to
\[	0\rightarrow R_0/u\rightarrow \widetilde\Lambda_{R_0,[0,b],0}/u\xrightarrow{\varphi-1}\widetilde\Lambda_{R_0,[0,b/p],0}/u\rightarrow 0	\]
which is exact.  Then a diagram chase shows that the middle row is exact, as well. 

Now we consider the inverse limit as $k\rightarrow \infty$; $\widetilde\Lambda_{R_0,[0,b],0}$ and $\widetilde\Lambda_{R_0,[0,b/p],0}$ are $u$-adically separated and complete (since $u\in \varpi^{1/b}$), so we have an exact sequence 
\[	0\rightarrow R_0\rightarrow \widetilde\Lambda_{R_0,[0,b],0}\xrightarrow{\varphi-1}\widetilde\Lambda_{R_0,[0,b/p],0}	\]
Moreover, the transition maps $R_0/u^{k+1}\rightarrow R_0/u^k$ are surjective, so the Mittag-Leffler condition ensures that $\varphi-1: \widetilde\Lambda_{R_0,[0,b],0}\rightarrow \widetilde\Lambda_{R_0,[0,b/p],0}$ is surjective, so we are done.
\end{proof}

\begin{lemma}\label{lemma: H_K coh triv}
For any finite extension $K/\Q_p$ and all sufficiently small $b>0$, there is a quasi-isomorphism
\[	[\widetilde\Lambda_{R_0,[0,b]}^{H_K}]\xrightarrow{\sim} C_{\mathrm{cont}}^\bullet\left(H_K,\widetilde\Lambda_{R_0,[0,b]}\right)	\]
where $C_{\mathrm{cont}}^\bullet(H_K,\widetilde\Lambda_{R_0,[0,b]})$ is the continuous Galois cohomology.
\end{lemma}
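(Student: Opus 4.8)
The plan is to rephrase the statement as a cohomology vanishing and prove that by dévissage together with the almost purity theorem. Since $H^0_{\mathrm{cont}}(H_K,-)$ is the $H_K$-invariants functor and the comparison map is the identity in degree $0$, the asserted quasi-isomorphism is equivalent to $H^i_{\mathrm{cont}}(H_K,\widetilde\Lambda_{R_0,[0,b]})=0$ for all $i\geq 1$. Since $b\in(0,\infty)$, the ring $\widetilde\Lambda_{R_0,[0,b]}$ is Tate with $[\overline\pi]^{s(b)}$ a pseudo-uniformizer (this is visible from the rational-localization description $\widetilde\Lambda_{R_0,[0,b],0}=(R_0\htimes\A_{\mathrm{inf}})\langle u/[\overline\pi]^{s(b)}\rangle$ of \S\ref{subsection: H_K-invts}); in particular $[\overline\pi]^{s(b)}$ is invertible, whereas $\overline\pi^{s(b)}\in\mathfrak{m}_{\C_p^\flat}$ and is fixed by $H_K$, because $\overline\pi=\varepsilon-1\in\mathscr{O}_{K_\infty}^\flat$ is (this is exactly why one replaces $[\varpi]$ by $[\overline\pi]$ in \S\ref{subsection: H_K-invts}). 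Each $H^i_{\mathrm{cont}}(H_K,\widetilde\Lambda_{R_0,[0,b]})$ is a module over $\widetilde\Lambda_{R_0,[0,b]}^{H_K}$, hence over $[\overline\pi]^{s(b)}$; moreover $\widetilde\Lambda_{R_0,[0,b]}=\widetilde\Lambda_{R_0,[0,b],0}[1/[\overline\pi]^{s(b)}]$, and $H^i_{\mathrm{cont}}(H_K,-)$ commutes with inverting the $H_K$-fixed element $[\overline\pi]^{s(b)}$ (a filtered colimit of multiplication maps). Thus it suffices to show that $H^i_{\mathrm{cont}}(H_K,\widetilde\Lambda_{R_0,[0,b],0})$ is \emph{almost zero}, i.e.\ annihilated by the ideal $[\mathfrak{m}_{\C_p^\flat}]$, for $i\geq1$: an almost-zero module is killed by $[\overline\pi]^{s(b)}$, which we are inverting.

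For the almost-vanishing I would run a dévissage of the kind already used repeatedly in this section. The ring $\widetilde\Lambda_{R_0,[0,b],0}=(R_0\htimes\A_{\mathrm{inf}})\langle u/[\overline\pi]^{s(b)}\rangle$ is $[\overline\pi]$-adically separated and complete (and $u\in[\overline\pi]^{s(b)}\widetilde\Lambda_{R_0,[0,b],0}$), so it equals $\varprojlim_n\widetilde\Lambda_{R_0,[0,b],0}/[\overline\pi]^n$. Feeding this into the Milnor exact sequence for $H^\ast_{\mathrm{cont}}(H_K,-)$ — whose $H^0$-terms have surjective transition maps, e.g.\ by the explicit description of invariants in Lemma~\ref{lemma: ainf-invariants} — and using that $[\mathfrak{m}_{\C_p^\flat}]$-torsion is stable under $\varprojlim$ and $\varprojlim^1$, one reduces to almost-vanishing of $H^i_{\mathrm{cont}}(H_K,-)$ on each $\widetilde\Lambda_{R_0,[0,b],0}/[\overline\pi]^n$, and thence — via the relation $[\overline\pi]^{s(b)}X=u$ — on the subquotients $(R_0/u^k)\htimes(\A_{\mathrm{inf}}/[\overline\pi]^{n'})[X]$ out of which these are built by finitely many extensions. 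On those I would filter $R_0$ by the ideals $I_j=p^jR\cap R_0$, passing to finite $u$-torsion-free modules over the $\F_p[\![u]\!]$-algebra $R_0/I_1$ and then, by powers of $u$, to free $\F_p$-modules; and filter $\A_{\mathrm{inf}}$ by powers of $p$ and then of $[\overline\pi]$, reaching $\mathscr{O}_{\C_p}^\flat/\overline\pi$. What is left at the bottom is a completed direct sum — in the sense appearing in the proof of Lemma~\ref{lemma: ainf-invariants} — of copies of $\mathscr{O}_{\C_p}^\flat/\overline\pi^{n'}$ with the standard $H_K$-action, the $R_0$-direction contributing only trivial coefficients. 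For those, $H^i_{\mathrm{cont}}(H_K,\mathscr{O}_{\C_p}^\flat/\overline\pi^{n'})$ is almost zero for $i\geq1$ by the (tilted) almost purity theorem for the perfectoid field $\widehat{K}_\infty$ — equivalently, by the almost étaleness of $\mathscr{O}_{\overline K}$ over $\mathscr{O}_{K_\infty}$ — which I would simply cite. Since almost-vanishing survives finite and completed direct sums, extensions, and the inverse limits used above, it propagates back up to $\widetilde\Lambda_{R_0,[0,b],0}$; combined with the localization step of the first paragraph this gives $H^i_{\mathrm{cont}}(H_K,\widetilde\Lambda_{R_0,[0,b]})=0$ for $i\geq1$, hence the quasi-isomorphism. (Throughout one uses that, $H_K$ being profinite and the coefficient modules complete with continuous action, the continuous cochain complex genuinely computes $H^\ast_{\mathrm{cont}}(H_K,-)$.)

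I expect the main obstacle to be the interaction of $H^\ast_{\mathrm{cont}}(H_K,-)$ with these limits and colimits: checking that the almost-vanishing obtained at finite level is uniform enough that the $\varprojlim^1$-terms in the Milnor sequences are themselves almost zero, and that cohomology really commutes both with the $[\overline\pi]$-adic completion defining $\widetilde\Lambda_{R_0,[0,b],0}$ and with the localization at $[\overline\pi]^{s(b)}$. The almost-purity input and the characteristic-$p$ dévissage closely mirror arguments already carried out in this section, so those should be routine; if one prefers, the finite-level almost-vanishing can instead be extracted from the normalized-trace construction behind Proposition~\ref{prop:ts-1-r}, which supplies trace-one elements of arbitrarily small valuation and thereby kills the cohomology of the relevant finite group quotients.
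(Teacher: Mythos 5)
Your reduction to the vanishing of $H^i_{\mathrm{cont}}(H_K,\widetilde\Lambda_{R_0,[0,b]})$ for $i\geq 1$ is exactly the paper's first step, but from there the two arguments diverge. The paper disposes of the vanishing in one line: $\widetilde\Lambda_{R_0,[0,b]}$ is a complete valued ring (with the valuation $v_{R,b,\lambda}$) satisfying the first Tate--Sen axiom by Proposition~\ref{prop:ts-1-r}, and the standard averaging argument of \cite[Proposition 14.3.2]{brinon-conrad} --- average an $i$-cocycle against trace-one elements $\alpha$ with $v_{R,b,\lambda}(\alpha)>-c_1$ over shrinking open normal subgroups of $H_K$ and pass to the limit using completeness --- kills all higher cohomology on the nose, not just almost. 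Your route instead inverts an $H_K$-fixed element of $[\mathfrak{m}_{\C_p^\flat}]$, proves almost-vanishing of $H^i_{\mathrm{cont}}(H_K,\widetilde\Lambda_{R_0,[0,b],0})$ by d\'evissage modulo powers of $[\overline\pi]$ down to $\mathscr{O}_{\C_p}^\flat/\overline\pi^{n'}$, and cites almost purity there. The arithmetic input is the same (TS1 is precisely the trace-theoretic incarnation of almost \'etaleness of $\mathscr{O}_{K_\infty}\subset\mathscr{O}_{\overline K}$), but the TS1 route works directly on the complete ring and avoids all of the limit/colimit bookkeeping that you correctly identify as the main obstacle; your route, on the other hand, would generalize to coefficient modules without a convenient valuation.

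One step in your sketch does not hold up as written: you invoke Lemma~\ref{lemma: ainf-invariants} to get surjectivity of the transition maps on $H^0(H_K,\widetilde\Lambda_{R_0,[0,b],0}/[\overline\pi]^n)$ and hence the vanishing of the $\varprojlim^1$-term in degree $1$ of the Milnor sequence. That lemma computes invariants of modules of the form $N\htimes\A_{\mathrm{inf}}$, not of the quotients $\widetilde\Lambda_{R_0,[0,b],0}/[\overline\pi]^n$, whose invariants carry the relation $[\overline\pi]^{s(b)}X=u$ and are exactly the delicate objects analyzed at length in \S\ref{subsection: H_K-invts}. The repair is to stay in the almost category: almost-surjectivity of these transition maps (which follows inductively from the almost-vanishing of $H^1$ of the kernels $[\overline\pi]^nM/[\overline\pi]^{n+1}M$) already forces the $\varprojlim^1$-term to be almost zero, since $\varprojlim^1$ of a tower of modules each killed by a fixed $\epsilon\in[\mathfrak{m}_{\C_p^\flat}]$ is again killed by $\epsilon$. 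With that adjustment your argument goes through, but it is considerably longer than the paper's.
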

\begin{proof}
We need to prove that $H_{\mathrm{cont}}^i(H_K,\widetilde\Lambda_{R_0,[0,b]})=0$ for $i\geq 1$.  But this follows from the first Tate--Sen axiom, as in~\cite[Proposition 14.3.2]{brinon-conrad}.
\end{proof}

Now we can prove the main comparison.
\begin{proof}[Proof of Theorem~\ref{thm: coh comp}]
Let $K'/K$ denote the extension corresponding to the quotient $\Gamma_K\twoheadrightarrow \Gamma_K^{p\mathrm{-tors}}$.  The Hochschild--Serre theorem implies that the natural map $R\Gamma\left(\Gamma_K^{p\mathrm{-tors}}, R\Gamma(\Gal_{K'},M)\right)\rightarrow R\Gamma(\Gal_K,M)$ is a quasi-isomorphism, so we may replace $K$ with $K'$ and prove that $R\Gamma(\Gal_K,M)$ is computed by $C_{\varphi,\Gamma_K}(D_{b,K}(M))$ when $\Gamma_K^{p\mathrm{-tors}}$ is trivial.

After making an admissible formal blow-up on $\Spf R_0$ and localizing on $\Spa R$, we may assume that $M$ is a free $R$-module and contains a free $\Gal_K$-stable $R_0$-lattice $M_0$.  There is an associated finite projective $(\varphi,\Gamma_L)$-module $D_{b,K}(M_0)$ for some $b$, and the natural comparison map 
\[	\widetilde\Lambda_{R_0,[0,b]}\otimes_{\Lambda_{R_0,[0,b],K}} D_{b,K}(M_0)\rightarrow \widetilde\Lambda_{R_0,[0,b]}\otimes_{R_0}M_0	\]
is an isomorphism, equivariantly for the actions of $\Gal_K$, $\varphi$, and $\Gamma_K$.  We deduce that there is a natural exact sequence
\begin{equation*}
\resizebox{\displaywidth}{!}{$
0\rightarrow M_0\rightarrow \widetilde\Lambda_{R_0,[0,b]}\otimes_{\Lambda_{R_0,[0,b],K}} D_{b,K}(M_0)\xrightarrow{\varphi-1}\widetilde\Lambda_{R_0,[0,b/p]}\otimes_{\Lambda_{R_0,[0,b/p],K}} D_{b/p,K}(M_0)\rightarrow 0
$}
\end{equation*}
Applying continuous $H_K$-cohomology, Lemma~\ref{lemma: H_K coh triv} implies that we have a quasi-isomorphism
\begin{equation*}
\resizebox{\displaywidth}{!}{
$C_{\mathrm{cont}}^\bullet(H_K, M_0)\xrightarrow{\sim}\left[\widetilde\Lambda_{R_0,[0,b]}^{H_K}\otimes_{\Lambda_{R_0,[0,b],K}} D_{b,K}(M_0)\xrightarrow{\varphi-1}\widetilde\Lambda_{R_0,[0,b/p]}^{H_K}\otimes_{\Lambda_{R_0,[0,b/p],K}} D_{b/p,K}(M_0)\right]	$}
\end{equation*}
A Hochschild--Serre argument shows that we have a quasi-isomorphism
\[	C_{\mathrm{cont}}^\bullet(\Gal_K,M_0)\xrightarrow{\sim} C_{\varphi,\Gamma_K}^\bullet(\widetilde\Lambda_{R_0,[0,b]}^{H_K}\otimes_{\Lambda_{R_0,[0,b],K}} D_{b,K}(M_0))	\]
so we need to show that the natural map
\[	C_{\varphi,\Gamma_K}^\bullet(D_{b,K}(M))\rightarrow C_{\varphi,\Gamma_K}^\bullet(\widetilde\Lambda_{R_0,[0,b]}^{H_K}\otimes_{\Lambda_{R_0,[0,b],K}} D_{b,K}(M))   \]
is a quasi-isomorphism.  It suffices to show that the cohomology of $\Gamma_K$ acting on $X_{R_0,[0,b],K}^n\otimes_{\varphi^{-n}(\Lambda_{R_0,[0,p^{-n}b],K,0})}D_{b,K,n}(M_0)$ is trivial for sufficiently small $b>0$, or more concretely, if $\gamma$ is a topological generator of the procyclic part of $\Gamma_K$, that the action of $\gamma-1$ is continuously invertible.  But we have computed an explicit topological basis for $X_{R_0,[0,b],K}^n$, so an argument as in Proposition~\ref{prop: gamma-1 inv d} gives the desired result.
\end{proof}

\appendix
\section{Fiber products}\label{appendix}

In this appendix, we explain how to construct fiber products 
\[	\Spa(R,R^+)\times_{\Spa(S,S^+)}\Spa(R',{R'}^+)	\]
f Tate pre-adic spaces.  Huber~\cite[Proposition 1.2.2]{huber2013} explains how to do this for Huber rings under various sets of hypotheses: either $(R,R^+)$ is ``locally of weakly finite type'' over $(S,S^+)$ and $(S,S^+)\rightarrow (R',{R'}^+)$ is an adic morphism, or $(R,R^+)$ can be ``locally of finite type'' over $(S,S^+)$.  However, $\Z_p[\![u]\!]\left[\frac p u\right]^\wedge\left[\frac 1 u\right]$ is not locally of finite type over $\Z_p$, and neither are rings like $\widetilde{\A}^{(0,r]}$ and $\A_K^{(0,p^{-n}r]}$; moreover, none of these rings have rings of definition which are adic $\Z_p$-algebras.

Let $(R,R^+)$ and $(R',{R'}^+)$ be complete admissible Huber pairs over $(S,S^+)$.  That is, $R$, ${R'}$, and $S$ are finitely generated over rings of definition $R_0\subset R^+$, ${R'}_0\subset {R'}^+$, and $S_0\subset S^+$, respectively; let $I\subset R_0$ and $I'\subset {R_0'}$ be ideals of definition.  It is asserted in~\cite{scholze-weinstein-berkeley} that the fiber product $X:=\Spa(R,R^+)\times_{\Spa(S,S^+)}\Spa(R',{R'}^+)$ can be constructed in the category of pre-adic spaces; we provide a construction here for the convenience of the reader.

Recall the definition of a pre-adic space from~\cite{scholze-weinstein-berkeley}:
\begin{definition}\label{def: pre-adic}
Let $(V)^{\mathrm{ind}}$ be the category of triples $(X,\mathscr{O}_X,(\lvert\cdot(x)\rvert)_{x\in X})$, where $X$ is a topological space, $\mathscr{O}_X$ is a sheaf of ind-topological rings, and for each $x\in X$, $\lvert\cdot(x)\rvert$ is an equivalence class of continuous valuations on $\mathscr{O}_{X,x}$.  For a Huber pair $(A,A^+)$, we define $\Spa^{\mathrm{ind}}(A,A^+)\in (V)^{\mathrm{ind}}$ to be $(X,\mathscr{O}_X^{\mathrm{ind}},(\lvert\cdot(x)\rvert)_{x\in X})$, where $X=\Spa(A,A^+)$, $\mathscr{O}_X^{\mathrm{ind}}$ is the sheafification of the presheaf $\mathscr{O}_X$ in the category of ind-topological rings, and the valuations stay the same.

A \emph{pre-adic space} is an object of $(V)^{\mathrm{ind}}$ which is locally isomorphic to $\Spa^{\mathrm{ind}}(A,A^+)$ for some complete Huber pair $(A,A^+)$.
\end{definition}
By~\cite[Proposition 3.4.2]{scholze-weinstein-berkeley}, 
\[	\Hom_{(V)^{\mathrm{ind}}}(\Spa^{\mathrm{ind}}(A,A^+),\Spa^{\mathrm{ind}}(B,B^+))=\Hom_{\mathrm{CAff}}((B,B^+),(A,A^+))	\] for all complete Huber pairs $(A,A^+)$, $(B,B^+)$ (where $\mathrm{CAff}$ denotes the category of complete Huber pairs).  This permits us to study pre-adic spaces via a functor of points approach.

We wish to construct the fiber product $X:=\Spa^{\mathrm{ind}}(R,R^+)\times_{\Spa^{\mathrm{ind}}(S,S^+)}\Spa^{\mathrm{ind}}(R',{R'}^+)$.  This fiber product must be final among spaces $Y$ equipped with maps $Y\rightarrow\Spa(R,R^+)$ and $Y\rightarrow \Spa(R',{R'}^+)$ such that the compositions $Y\rightarrow\Spa(R,R^+)\rightarrow \Spa(S,S^+)$ and $Y\rightarrow\Spa(R',{R'}^+)\rightarrow \Spa(S,S^+)$ agree.

\begin{proposition}
Let notation be as above. The fiber product 
\[	\Spa^{\mathrm{ind}}(R,R^+)\times_{\Spa^{\mathrm{ind}}(S,S^+)}\Spa^{\mathrm{ind}}(R',{R'}^+)	\]
is representable in the category of pre-adic spaces.
\end{proposition}
\begin{proof}
If $R=R^+=R_0$, $R'={R'}^+=R_0'$, and $S=S^+=S_0$, then $R^+$, ${R'}^+$, and $S^+$ are rings of definitions, and we may simply take $R_0\otimes_{S_0}{R_0'}$ and complete $(I\otimes {R_0'}+R_0\otimes I')$-adically.

To handle the general case, we write $R=R_0[\underline X]/J$ and $R'=R'_0[\underline X']/J'$, where $\underline X$ and $\underline X'$ are finite collections of elements generating $R$ and ${R'}$, and $J\subset R_0[\underline X]$ and $J'\subset {R}_0'[\underline X']$ are ideals; let $\underline r, \underline r'$ be the images of $\underline X, \underline X'$ in $R, R'$, respectively.  Let $g:(S,S^+)\rightarrow (R,R^+)$ and $g':(S,S^+)\rightarrow (R',{R'}^+)$ be the structure maps.  If necessary, we replace $S_0$ with $S_0':=S_0\cap g^{-1}(R_0)\cap {g'}^{-1}(R_0')$; $S_0'$ is an open and bounded subring of $S$, hence a ring of definition, and it satisfies $g(S_0')\subset R_0$ and $g'(S_0')\subset R_0'$.  

Let $(T,T^+)$ be a complete affinoid Huber pair over $(S,S^+)$, and suppose that there are homomorphisms $f:(R,R^+)\rightarrow (T,T^+)$ and $f':(R',{R'}^+)\rightarrow (T,T^+)$ over $(S,S^+)$.  Therefore, there are homomorphisms $R^+\otimes_{S^+}{R'}^+\rightarrow T^+$ and $R\otimes_SR'\rightarrow T$.  For $r\in R$, consider $f(r)\in T$.  Since $T^+\subset T$ is an open subring, there is some open neighborhood $V\subset T^+$ of $0$ such that $f(r)\cdot V\subset T^+$.  Since $f'(I')$ must consist of topologically nilpotent elements, there is some $n\gg 0$ such that $f'({I'}^n)\subset V$, and therefore $f(r)\cdot f'({I'}^n)\subset T^+$ consists of integral elements.  Similarly, for each $r'\in R'$, there is some $n'\gg 0$ such that $f'(r')\cdot f(I^{n'})$ consists of integral elements.  Since $R$ and ${R'}$ are finitely generated over $R_0$ and $R'_0$, respectively, we may choose $n,n'\gg 0$ such that that $f(\underline r)\cdot f'({I'}^n),f'(\underline r')\cdot f(I^{n'})\subset T^+$.

We topologize $R\otimes_SR'$ so that 
\[	(R_0\otimes_{S_0}R_0')\left[(\underline r\otimes 1)(R_0\otimes I')^n, (1\otimes\underline r')(I\otimes R_0')^n\right]	\]
is a ring of definition:  We let $(R_0\otimes_{S_0}R_0')[\underline X,\underline X']_{(n)}$ be the polynomial ring $(R_0\otimes_{S_0}R_0')[\underline X,\underline X']$ and we equip it with the $R_0\otimes I'+I\otimes R'_0$-adic topology.  That is,
\begin{equation*}
	\resizebox{\displaywidth}{!}{$
U_m:=\left\{\sum_{\nu,\nu'}a_{\nu,\nu'}\underline X^\nu{\underline X'}^{\nu'}\mid a_{\nu,\nu'}\in (R_0\otimes I'+I\otimes R'_0)^{n(\nu+\nu')+m}\text{ for all }\nu,\nu'\right\}
$}
\end{equation*}
is a basis of neighborhoods of $0$.  We set 
\[	(R\otimes_SR')_{(n)}:=(R_0\otimes_{S_0}R_0')[\underline X,\underline X']_{(n)}/(J\otimes R_0', R_0\otimes J')	\] 
so that $(R\otimes_SR')_{(n),0}:=U_0/(J\otimes R_0', R_0\otimes J')$ is a ring of definition, and we let $(R\otimes_{S}R')_{(n)}^+$ be the integral closure of the image of 
\[	(R^+\otimes_{S^+}{R'}^+)\left[(\underline r\otimes 1)(R_0\otimes I')^n, (1\otimes\underline r')(I\otimes R_0')^n\right]	\]
in $(R\otimes_{S}R')_{(n)}$.

Now we define $((R\htimes_SR')_{(n)}, (R\htimes_{S}{R'})_{(n)}^+)$ to be the completion of the pair $(R\otimes_{S}R')_{(n)}, (R\otimes_{S}R')_{(n)}^+)$, and we set 
\[	X_{(n)}:=\Spa^{\mathrm{ind}}((R\htimes_{S}R')_{(n)},(R\htimes_{S}R')_{(n)}^+)	\]
The homomorphisms $g:(R,R^+)\rightarrow (T,T^+)$ and $g':(R',{R'}^+)\rightarrow (T,T^+)$ induce a unique morphism $\Spa^{\mathrm{ind}}(T,T^+)\rightarrow X_{(n)}$, by construction.

Furthermore, there are natural maps $(R\htimes_{S}R')_{(n+1)}\rightarrow (R\htimes_{S}R')_{(n)}$, and they are compatible with the natural maps from $R$, $R'$, and $S$.  The induced maps $X_{(n)}\rightarrow X_{(n+1)}$ make $X_{(n)}$ into a rational subset of $X_{(n+1)}$ for all $n$, so we may define a pre-adic space $X:=\cup_{n}X_{(n)}$.
Then $X$ is the pre-adic space representing $\Spa^{\mathrm{ind}}(R,R^+)\times_{\Spa^{\mathrm{ind}}(S,S^+)}\Spa^{\mathrm{ind}}(R',{R'}^+)$.
\end{proof}

\begin{example}
Let $R=R^+=\Z_p[\![u]\!]$, $R'=\Q_p$, ${R'}^+=\Z_p$, and $S=S^+=\Z_p$, so that we may take $R_0=\Z_p[\![u]\!]$, $R_0'={R'}^+=\Z_p$, and $S_0=S^+=\Z_p$.  Then we need $u^n\cdot\left(\frac 1 p\right)$ to be bounded for varying $n$, so we need to consider quotients of
\begin{multline*}
		R\langle X\rangle_{(p,u)^n}= \left\{\sum_{\nu\geq 0}a_{\nu}X^\nu\mid a_\nu \in \Z_p[\![u]\!] \text{ and for all } m, \right. 	\\
		a_{\nu}\in (p,u)^{n\nu+m}	\text{ for almost all }\nu	\left.\vphantom{\sum_{\nu\geq 0}}\right\}
\end{multline*}
and its subring
\[	\{\sum_{\nu\geq 0}a_{\nu}X^\nu\mid a_\nu\in\Z_p[\![u]\!], a_{\nu}\in (p,u)^{n\nu}\text{ for all }\nu	\}	\]
After we quotient by the ideal $(pX-1)$, the latter ring becomes $\Z_p[\![u]\!]\left[\frac{u^n}{p}\right]^\wedge$ and the former becomes $\Z_p[\![u]\!]\left[\frac{u^n}{p}\right]^\wedge\left[\frac 1 p\right]$.  Thus, we get the standard construction for the generic fiber of $\Spf\Z_p[\![u]\!]$ (cf. ~\cite[\textsection 7]{dejong}.
\end{example}

\begin{proposition}
Suppose $(R,R^+)$ and $(R',{R'}^+)$ are Tate, with pseudo-uniformizers $u\in R^+$ and $u'\in {R'}^+$, respectively, and suppose $S=S^+$.  Then $(R\widehat\otimes_{S} R')_{(n)}$ is also Tate, and the images of $u$ and $u'$ are pseudo-uniformizers.
\end{proposition}
\begin{proof}

There are natural continuous maps $R_0\htimes_{S_0} R'_0\rightarrow (R\htimes_{S}R')_{(n),0}$ and $R\otimes_SR'\rightarrow (R\htimes_{S}R')_{(n)}$ for all $n\geq 0$.  Since $u\otimes 1$ and $1\otimes u'$ are topologically nilpotent in $R_0\htimes {R'}_0$ and invertible in $R\otimes_SR'$, they are topologically nilpotent units in $(R\htimes_{S}R')_{(n)}$.
\end{proof}

\begin{remark}
We do not know whether this fiber product preserves properties such as being noetherian or being sheafy.  In particular, we do not know whether the fiber product of two adic spaces is again an adic space (or merely a pre-adic space).
\end{remark}

\bibliographystyle{plain}
\bibliography{bc-tate}

\end{document}